\newcommand{\C}{{\mathbb C}}
\newtheorem{thm}{Theorem}[section]
\newtheorem{cor}[thm]{Corollary}
\newtheorem{lem}[thm]{Lemma}
\newtheorem{prop}[thm]{Proposition}
\newtheorem{defn}[thm]{Definition}
\newtheorem{rem}[thm]{Remark}
\newtheorem{rems}[thm]{Remarks}
\numberwithin{equation}{section}
\def\G{\Gamma}
\newcommand{\bfz}{{\mathbb {Z}}}
\newcommand{\bfq}{{\mathbb {Q}}}
\newcommand{\J}{{\mathcal {J}}}
\newcommand{\cpkk}{{\overline {{\mathbb C}{\mathbb P}^2}}}
\newcommand{\cpk}{{\mathbb {CP}}^2}
\newcommand{\cp}{{\mathbb {CP}}}
\newcommand{\frg}{{\mathcal {G}}}
\newcommand{\fra}{{\mathcal {A}}}
\newcommand{\frb}{{\mathcal {B}}}
\newcommand{\frc}{{\mathcal {C}}}
\newcommand{\frw}{{\mathcal {W}}}
\newcommand{\frn}{{\mathcal {N}}}
\newcommand{\farm}{{\mathcal {M}}}
\begin{document}

\title{Weighted homogeneous singularities and rational homology disk
  smoothings}

\author{Mohan Bhupal}
\address{Middle East Technical University, Ankara, Turkey}
\email{bhupal@metu.edu.tr}

\author[Andr{\'a}s I. Stipsicz]{Andr{\'a}s I. Stipsicz}
\address{R\'enyi Institute of Mathematics, Budapest, Hungary}
\email{stipsicz@renyi.hu}

\primaryclass{14J17}
\secondaryclass{53D35, 32S25}

\keywords{smoothing of surface singularities, rational homology disk
  fillings}

\begin{abstract}
We classify the resolution graphs of weighted homogeneous surface
singularities which admit rational homology disk smoothings. The
nonexistence of rational homology disk smoothings is shown by
symplectic geometric methods, while the existence is verified via
smoothings of negative weight.
\end{abstract}
\maketitle

\section{Introduction}
\label{s:int}
Suppose that $S$ is the germ of an isolated normal complex surface
singularity. For hypersurface and complete intersection singularities, there
are natural smoothings (i.e., deformations with smooth generic fibre) given by
the defining functions, and their properties have been known for a long time:
such a smoothing is topologically a bouquet of 2--spheres. But in general it
is not clear whether smoothings of $S$ exist, or, if they do, what their basic
topological properties are. It would be natural to try to understand those
singularities which possess a smoothing with the `simplest' possible topology.
We say that a smoothing is a \emph{rational homology disk} ($\bfq$HD for
short) if the underlying smooth 4-manifold has rational homology groups
isomorphic to $H_* (D^4; \bfq )$, where $D^4$ denotes the 4-dimensional disk.
Strong constraints are imposed for a singularity to admit a $\bfq$HD smoothing
--- it is necessarily a \emph{rational} surface singularity, implying among
other things that the resolution graph of $S$ must be a (negative definite)
\emph{tree}, and the link of $S$ a rational homology sphere.  Examples of
singularities with $\bfq$HD smoothings already appeared in \cite{Wahl}. The
$\frac{p^2}{pq-1}$ cyclic quotient singularities ($0<q<p, (p,q)=1$) provide a
complete list of cyclic quotients with this property, and \cite{Wahl} also
contained some further examples (with resolution graphs given by
Figure~\ref{f:qhd3}(a)). In fact, throughout the years, a list of such
examples was compiled by J.~Wahl, which was known to the experts (cf.\ the
remark in \cite[bottom~of~page~505]{vSdJ}) but did not appear in print.

The smooth 4-manifold-theoretic application of certain singularities with
$\bfq$HD smoothings, through the rational blow-down procedure
(introduced by Fintushel and Stern \cite{FSratbl} and extended by Park
\cite{Pratbl}), have put the study of singularities with $\bfq$HD
smoothings at the forefront of 4-dimensional topology. In \cite{SSW}
a systematic investigation of the resolution graphs of such
singularities was initiated, and (relying on Donaldson's famous
Theorem A, and some further observations) strong combinatorial
constraints have been found for a (negative definite) plumbing tree to
be the resolution graph of a singularity admitting a $\bfq$HD
smoothing.  Although \cite{SSW} did not aim to provide a complete
classification of singularities with $\bfq$HD smoothings, the examples
given there (in hindsight) provided a nearly complete list of weighted
homogeneous singularities with $\bfq$HD smoothings (the only missing
examples from \cite{SSW} are the ones corresponding to the graphs of
Figures~\ref{f:qhd3}(h) and (i), which were also known to the authors of
\cite{SSW} to admit $\bfq$HD smoothings).

In the present work --- resting on results of \cite{SSW} and on some
fundamental theorems in symplectic geometry --- we give a complete
classification of the resolution graphs of weighted homogeneous
singularities admitting $\bfq$HD smoothings. Surprisingly enough, the
complete list of resolution graphs of weighted homogeneous
singularities with $\bfq$HD smoothings essentially coincides with the
list of examples of Wahl mentioned above. In order to state our
results precisely, we need a few preliminary notions and
definitions.

The link $Y_{\Gamma}$ of a singularity $S_{\Gamma}$ with resolution
graph $\Gamma$ is determined by the graph $\Gamma$, and according to
\cite{CNP} the 3-manifold $Y_{\Gamma}$ admits a (up to
contactomorphism) unique contact structure, its \emph{Milnor fillable
  contact structure} $\xi _{\G}$, given by the 2-plane field of complex
tangencies on $Y_{\Gamma}$ as a link of $S_{\Gamma}$.  Any smoothing
of the singularity $S_{\Gamma}$ naturally provides a Stein filling of
the Milnor fillable contact 3-manifold $(Y_{\Gamma}, \xi _{\G})$.
(For the definition of various notions of fillings of contact
3-manifolds, see \cite[Section~12.1]{OS}.)

\begin{defn}
{\rm We call a  normal complex surface singularity $S_{\Gamma}$
  \emph{spherical Seifert} if the link of the singularity is a Seifert
  fibred 3-manifold over the sphere $S^2$. The spherical Seifert
  singularity $S_{\Gamma}$ is \emph{small Seifert} if the link is a
  small Seifert fibred 3-manifold, i.e., it admits a Seifert fibration
  over $S^2$ with exactly three singular fibres.}
\end{defn}

A normal surface singularity is therefore spherical Seifert if and
only if it admits a resolution graph which is a star-shaped tree and
the vertices correspond to rational curves; in addition, $S_{\Gamma}$
is small Seifert if the central vertex (the unique vertex of valency
$>2$) in a minimal good resolution is of valency 3.  By
\cite[Theorem~2.6.1]{OW}, weighted homogeneous singularities with
rational homology sphere links are all spherical Seifert singularities
(but the converse does not hold). For a definition of weighted
homogeneous singularities (also called quasi--homogeneous, or
singularities with a good $\C ^*$--action) see, for example,
\cite[p.\ 206]{OW}.

\begin{defn}
{\rm Define ${\mathcal {QHD}}^3$ as the set of graphs given by
  Figure~\ref{f:qhd3}.
\begin{figure}[!ht]
\begin{center}
\includegraphics[width=13cm]{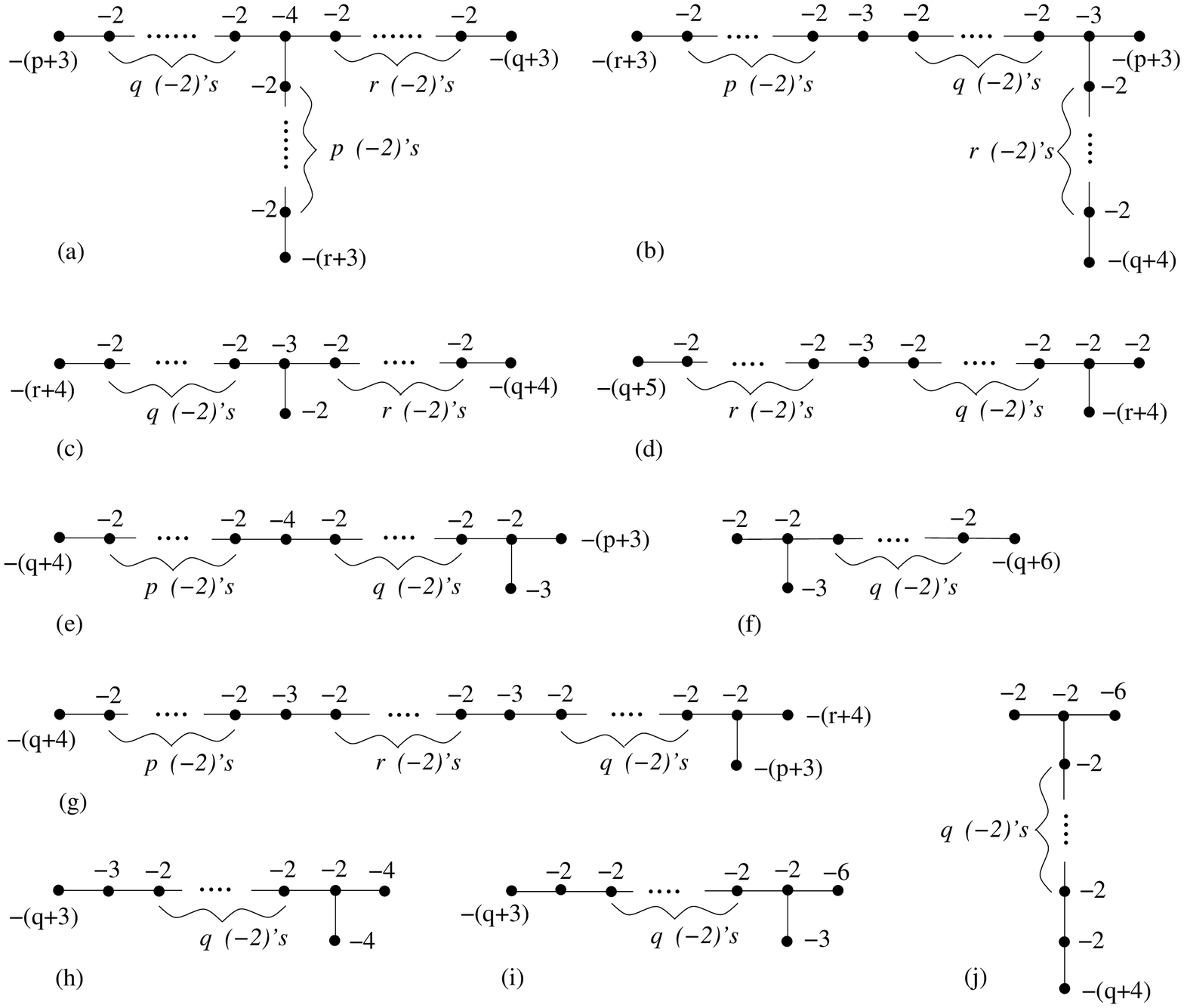}
\end{center}
\caption{The graphs defining the class ${\mathcal {QHD}}^3$ of
  plumbing graphs. We assume that $p,q,r\geq 0$.}
\label{f:qhd3}
\end{figure}
}
\end{defn}
\begin{rem}\label{r:wnm} {\rm The graphs given in Figure~\ref{f:qhd3}(a)
    form the set $\frw$ of \cite{SSW}, those in
    Figures~\ref{f:qhd3}(b) and (c) form $\frn$, while the collection
    of (d), (e), (f) and (g) were called $\farm$ in \cite{SSW}.  The
    graphs of Figure~\ref{f:qhd3}(h) provide the 1-parameter family
    ${\frb} ^3 _2$ of certain star-shaped graphs with three legs in
    the class $\frb$ of \cite{SSW}, and the ones of the form (i) and
    (j) are two 1-parameter families ${\frc }^3 _{2}$ and ${\frc }^3
    _{3}$ in $\frc$. (For the definition of the classes $\fra, \frb$
    and $\frc$ of graphs see Subsection~\ref{ss:fam}. The superscript
    in the notation is intended to indicate the number of legs; the
    subscripts in the cases of $\frb$ and $\frc$ will be explained in
    Subsections~\ref{ss:famc} and \ref{ss:famb}. With the same line of
    logic, families $\fra ^3, {\frb} ^3 _4$ and ${\frc}^3_6$ could also be
    defined, but these graphs already appear as (e) (with $p=0$), (d)
    (with $r=0$) and (f) of Figure~\ref{f:qhd3}.)}
\end{rem}
According to \cite{Lauf}, normal complex surface singularities
corresponding to the resolution trees in ${\mathcal {QHD}}^3$ are all
{\em taut}, that is, the resolution graph uniquely determines the
analytic structure of the corresponding singularity.  Since for any
star-shaped negative definite plumbing tree of spheres there is a
weighted homogeneous singularity with that resolution graph
\cite[Theorem~2.1]{Pinki}, the unique singularity above is necessarily
weighted homogeneous. The first main result of the paper is

\begin{thm}\label{t:main}
Suppose that $S_{\Gamma}$ is a small Seifert singularity with link
$Y_{\Gamma}$. Assume that $\Gamma $ is a minimal good resolution graph
of $S_{\Gamma}$, and therefore a negative definite star-shaped tree
with three branches.  Then the following three statements are
equivalent:
\begin{enumerate}
\item The singularity $S_{\Gamma}$ admits a $\bfq$HD smoothing.
\item The Milnor fillable contact structure on $Y_{\Gamma}$ admits a
weak symplectic $\bfq$HD filling.
\item The graph $\Gamma$ is in ${\mathcal {QHD}}^3$.
\end{enumerate}
\end{thm}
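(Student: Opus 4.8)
The plan is to prove the cycle of implications $(1)\Rightarrow(2)\Rightarrow(3)\Rightarrow(1)$, whose three steps are of quite different character. The implication $(1)\Rightarrow(2)$ is immediate from the discussion in the introduction: a smoothing of $S_{\Gamma}$ gives a Stein filling of $(Y_{\Gamma},\xi_{\G})$, a Stein filling is in particular a weak symplectic filling, and the rational homology of the filling agrees with that of the smoothing, so a $\bfq$HD smoothing produces a weak symplectic $\bfq$HD filling.

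For $(3)\Rightarrow(1)$ --- the existence part --- I would treat each graph of Figure~\ref{f:qhd3} in turn. By \cite{Lauf} the singularity $S_{\Gamma}$ is taut, and by \cite{Pinki} it may be realized by a weighted homogeneous singularity carrying a good $\C^{*}$-action. I would then exhibit a \emph{smoothing of negative weight} in the sense of Pinkham and Wahl: starting from explicit weighted homogeneous equations, one adjoins negative-weight terms to obtain a deformation with smooth generic fibre, and computes that the Milnor number of the resulting Milnor fibre makes its rational second homology vanish. For the families $\frw$, $\frn$ and $\farm$ this recovers the examples already known to Wahl, and the two remaining one-parameter families (cases (h)--(j)) are handled by the same mechanism; the verification is a finite, if tedious, Milnor-number computation.

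The implication $(2)\Rightarrow(3)$ is the heart of the matter and the step I expect to be the main obstacle. Given a weak symplectic $\bfq$HD filling $W$ of $(Y_{\Gamma},\xi_{\G})$, I would first use that the link $Y_{\Gamma}$ is a rational homology sphere to deform $W$ into a strong (exact) symplectic filling, and then glue it to the minimal good resolution $\xg$ along $Y_{\Gamma}$ to form a closed oriented smooth $4$-manifold $Z=\xg\cup_{Y_{\Gamma}}(-W)$. Since $W$ is a $\bfq$HD and $Y_{\Gamma}$ is a rational homology sphere, $Z$ is negative definite and its intersection lattice contains the plumbing lattice of $\Gamma$ as a finite-index sublattice; Donaldson's diagonalization theorem then forces this lattice to embed, with finite index, into the standard diagonal lattice $\langle-1\rangle^{n}$, where $n$ is the number of vertices of $\Gamma$. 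The point here is that this gluing argument is purely smooth and applies verbatim to an arbitrary weak symplectic $\bfq$HD filling, not only to a Milnor fibre, so it reproduces for statement (2) exactly the lattice-embedding obstruction exploited in \cite{SSW}; invoking their combinatorial analysis cuts the star-shaped three-legged graphs down to a short list of candidate families.

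The real difficulty is that several of these candidates survive the Donaldson test while lying outside ${\mathcal {QHD}}^{3}$, and to discard them the smooth lattice argument is no longer sufficient. Here I would bring in the symplectic structure genuinely: capping the strong filling $W$ by a symplectic cap yields a closed symplectic $4$-manifold, to which the work of Taubes, McDuff and Liu applies --- in the cases at hand the cap can be arranged so that the closed manifold has $b_{2}^{+}=1$, forcing it to be rational or ruled and endowing it with a nontrivial Seiberg--Witten class. Matching the resulting adjunction and configuration constraints against the plumbing data of the surviving candidates should produce a contradiction for every graph not appearing in Figure~\ref{f:qhd3}. Carrying out this elimination \emph{uniformly} across the remaining infinite parameter families, rather than graph by graph, is where I expect the bulk of the technical work to lie.
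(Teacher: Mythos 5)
Your overall architecture --- the cycle $(1)\Rightarrow(2)\Rightarrow(3)\Rightarrow(1)$, with $(1)\Rightarrow(2)$ immediate, a Donaldson-type lattice reduction, and a symplectic capping argument for $(2)\Rightarrow(3)$ --- matches the paper's, and your re-derivation of the lattice obstruction for arbitrary weak $\bfq$HD fillings is sound (it is exactly Theorem~\ref{t:ssw}, which \cite{SSW} already prove for fillings, not only smoothings). The genuine gap is in the elimination step. After the reduction one must decide precisely which three-legged members of $\fra\cup\frb\cup\frc$ admit $\bfq$HD fillings, and the tools you name --- a generic symplectic cap with $b_2^+=1$, rational-or-ruledness, a nontrivial Seiberg--Witten class, ``adjunction and configuration constraints'' --- do not engage the graph data: constraints of this homological type are essentially what the Donaldson step already exploited, and the surviving families pass them. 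What the paper does instead is construct, for each family, a \emph{specific} compactifying divisor: the configuration $K$ dual to $\G$, embedded in a blown-up Hirzebruch surface, which after three blow-downs contains a symplectic $(+1)$-sphere $L$ triply tangent to a distinguished curve $D$. Gluing $K$ to the hypothetical filling $X$ and invoking McDuff's Theorem~\ref{thm:McDuff}, the closed manifold $Z$ is a blow-up of $\cpk$; and because $X$ is a $\bfq$HD, $b_2(Z)$ equals the number of components of $K$, so the blow-down to $\cpk$ must consume \emph{every} component of $K$ disjoint from $L$, while each component $K_0$ meeting $L$ must descend to a curve of degree $K_0\cdot L$ in $\cpk$ (a line, or a cubic with exactly one node or cusp). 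Locating the few additional $(-1)$-curves compatible with positivity of intersections, Corollary~\ref{c:nocycle} and Lemma~\ref{l:sing} is what forces the framings to be exactly those of ${\mathcal {QHD}}^3$ (Propositions~\ref{p:c6}--\ref{p:caseb2}). Your sketch contains no substitute for this counting-and-blow-down mechanism, and without it the ``matching against the plumbing data'' has nothing concrete to act on.

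There is a second, smaller gap in $(3)\Rightarrow(1)$. For the graphs (a)--(g) of Figure~\ref{f:qhd3} one can quote \cite{SSW} directly; but for (h), (i), (j) your plan --- write explicit weighted homogeneous equations, adjoin negative-weight terms, and verify the $\bfq$HD property by a Milnor number computation --- is not executable as stated: any singularity admitting a $\bfq$HD smoothing is rational and non-Gorenstein (a rational Gorenstein point is an ADE singularity), hence never a complete intersection, so there is no finite set of defining equations to perturb, and no Milnor-number count certifies that the generic fibre of a candidate deformation is a rational homology disk. The workable form of ``smoothing of negative weight'' is Pinkham's compactification criterion \cite{Pink}, as formulated in \cite[Theorem~8.1]{SSW}: one must exhibit rational curves in a rational surface $R$ intersecting according to the dual graph $\G'$ with ${\rm rk}\, H_2(R;\bfz)=\vert\G'\vert$. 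The paper does this by explicit configurations in $\cpk$ --- the nodal cubic $C$, its inflectional tangent $L$, and the lines $N_1, N_2, N_3$ of Figure~\ref{f:curves} --- followed by a blow-up sequence; some such concrete construction is unavoidable in your step as well.
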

For star-shaped diagrams with more than three branches the analytic type of the
singularity is not determined by the graph itself, hence the
formulation of our result needs a little more care.
\begin{defn}
{\rm Define ${\mathcal {QHD}}^4$ as the union
of all graphs given by Figures~\ref{f:qhd4uj}(a), (b) and (c) for $n\geq 2$
in each case.
\begin{figure}[ht]
\begin{center}
\includegraphics[width=8cm]{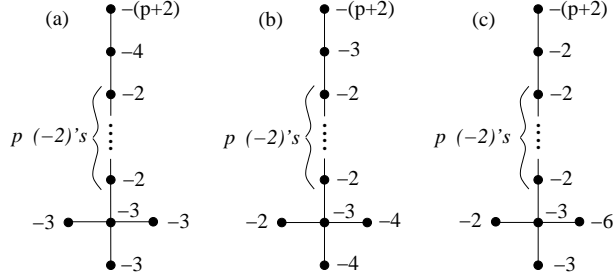}
\end{center}
\caption{The graphs of (a) define the class $\fra ^4$, graphs of (b)
  give the class $\frb ^4$, while the graphs of (c) give $\frc ^4$ (in
  all these cases we assume $p\geq 0$). The union of the above
  specified classes is, by definition, ${\mathcal {QHD}}^4$. Once
  again, the superscript in the notation records the number of legs of
  these star-shaped graphs.}
\label{f:qhd4uj}
\end{figure}
}
\end{defn}
According to \cite{Lauf}, the analytic type of a normal surface
singularity with resolution graph in ${\mathcal {QHD}}^4$ is
determined by the analytic type of the four intersection points of the
central curve $C$ with the branches, or equivalently, by the cross
ratio of these four points in $C$. In particular, all normal surface
singularities with these resolution graphs are weighted homogeneous.
With these remarks in place, we are ready to state the second main
result of the paper.

\begin{thm}\label{t:main2}
  Suppose that $\Gamma$ is a minimal, star-shaped plumbing tree with at least
  four branches, and the framing (i.e. weight) of the central vertex is less
  than $-2$. Then the following statements are equivalent.
\begin{enumerate}
\item There is a Seifert singularity $S_{\Gamma}$ with resolution
  graph $\Gamma$ which admits a $\bfq$HD smoothing.
\item The Milnor fillable contact structure on $Y_{\Gamma}$ admits a
weak symplectic $\bfq$HD filling.
\item The graph $\Gamma$ is in ${\mathcal {QHD}}^4$.
\end{enumerate}
\end{thm}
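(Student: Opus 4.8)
The plan is to prove Theorem~\ref{t:main2} by establishing the cycle of implications $(1)\Rightarrow(2)\Rightarrow(3)\Rightarrow(1)$, exploiting the fact that the three-branch case (Theorem~\ref{t:main}) is already in hand and can serve as a template.

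First, the implication $(1)\Rightarrow(2)$ should be essentially formal and independent of the number of branches: any smoothing of the singularity $S_{\Gamma}$ furnishes a Stein (hence weak symplectic) filling of the Milnor fillable contact structure $(Y_{\Gamma},\xi_{\G})$, as noted in the introduction, and if the smoothing is a $\bfq$HD then so is the resulting filling. I would simply invoke this, perhaps citing the same argument used for Theorem~\ref{t:main}, since nothing here is sensitive to the valency of the central vertex.

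The crux is the implication $(2)\Rightarrow(3)$, the nonexistence/classification direction, proved by symplectic methods. The strategy I would follow mirrors the three-branch argument: given a weak symplectic $\bfq$HD filling of $(Y_{\Gamma},\xi_{\G})$, cap it off against the canonical negative-definite plumbing $X_{\Gamma}$ (or its boundary-connect-sum with the filling) to build a closed symplectic, or at least smooth negative-definite, 4-manifold, and then apply Donaldson's Theorem~A to force the intersection form to embed as a standard diagonal lattice. The combinatorial constraints that Donaldson's theorem imposes on the plumbing lattice --- exactly the embedding obstructions systematically worked out in \cite{SSW} --- then have to be shown to admit, among star-shaped graphs with at least four branches and central weight $<-2$, only the solutions listed in $\mathcal{QHD}^4$. \textbf{This lattice-embedding bookkeeping is where I expect the main obstacle to lie:} with four or more legs the number of candidate leg-continued-fraction configurations explodes, and one must argue that the central-vertex weight $<-2$ hypothesis together with the $\bfq$HD (zero rational second Betti number of the filling) condition collapses the possibilities down to the families $\fra^4,\frb^4,\frc^4$. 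I would try to leverage the classification already achieved in \cite{SSW} as much as possible --- Remark~\ref{r:wnm} signals that the relevant classes $\fra,\frb,\frc$ are set up there --- so that the new content is the reduction of the $\geq 4$-branch case to these families rather than a from-scratch lattice enumeration.

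Finally, for $(3)\Rightarrow(1)$, the existence direction, I would verify that each graph in $\mathcal{QHD}^4$ is the resolution graph of a weighted homogeneous singularity (guaranteed by \cite[Theorem~2.1]{Pinki} and the tautness/weighted-homogeneity remarks preceding the theorem) which carries a $\bfq$HD smoothing. Following the method announced in the abstract, I would exhibit these smoothings as smoothings of negative weight, constructing them explicitly for the families $\fra^4,\frb^4,\frc^4$ --- most likely by the same rational-blowdown/negative-weight-deformation techniques that produce the examples in the three-branch case, and checking in each family that the Milnor number (equivalently the rational second Betti number of the Milnor fibre) vanishes. The condition that the central weight be $<-2$ should be used here to ensure the central curve is genuinely rational and the relevant deformation-theoretic input applies uniformly across the three families.
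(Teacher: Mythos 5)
There is a genuine gap, and it sits exactly where you flagged your main obstacle: the implication $(2)\Rightarrow(3)$. You propose to glue the hypothetical $\bfq$HD filling to the negative definite plumbing and invoke Donaldson's Theorem~A, then finish by lattice-embedding bookkeeping. But that is precisely the argument of \cite{SSW}, and its full output is Theorem~\ref{t:ssw}: the graph must lie in $\frg\cup\frw\cup\frn\cup\farm\cup\fra\cup\frb\cup\frc$. The Donaldson obstruction cannot be pushed beyond this point, because the families $\fra,\frb,\frc$ are by construction the solution set of those lattice-embedding constraints; every four-legged graph in $\fra\cup\frb\cup\frc$ passes the Donaldson test, yet only the small subfamilies forming ${\mathcal {QHD}}^4$ actually admit $\bfq$HD fillings. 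So no amount of bookkeeping with the diagonal-lattice obstruction will collapse $\fra\cup\frb\cup\frc$ down to ${\mathcal {QHD}}^4$. The missing idea --- the actual new content of the paper --- is a different compactification: one glues the filling not to the convex plumbing $W_\G$ (which in any case cannot be symplectically glued to another filling, both pieces being convex), but to the \emph{concave} compactifying divisor $W_{\G'}$ built from the dual graph $\G'$. This produces a \emph{closed symplectic} 4-manifold containing a symplectic sphere $L$ of square $+1$; McDuff's Theorem~\ref{thm:McDuff} forces it to be rational, and then a delicate analysis of where the pseudoholomorphic $(-1)$-curves in the complement of $L$ can sit --- carried out case by case in Propositions~\ref{p:c4leg}, \ref{p:b4leg} and \ref{p:a4leg} --- shows the configuration descends to a legitimate curve configuration in $\cpk$ (lines and nodal/cuspidal cubics) only when the graph is in ${\mathcal {QHD}}^4$.

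Your other two implications are essentially fine. $(1)\Rightarrow(2)$ is the formal observation the paper also uses, and $(3)\Rightarrow(1)$ is indeed settled by negative-weight smoothings, though the paper does not construct them from scratch: it cites \cite[Examples~8.7, 8.12 and 8.14]{SSW} and \cite{Wahluj}, and separately exhibits explicit configurations of plane cubics and lines in $\cpk$ which, blown up suitably and fed into Pinkham's theorem, give an alternative existence proof. One further correction: the hypothesis that the central framing is $<-2$ plays no role in the existence direction (it is not needed to make the central curve rational); as the remark following the theorem explains, it is used in the nonexistence direction, to guarantee that the dual configuration contains no $(-2)$-framed central curve, so that the blow-down procedure driving the McDuff argument can be started at all.
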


\begin{rems}
{\rm {\bf {(a)}} The assumption on the framing of the central vertex
  in Theorem~\ref{t:main2} is needed for our methods to work. In
  particular, a $(-2)$--framed central vertex with four legs provides
  a $(-2)$--curve in the dual configuration, hence the blow-down
  operation indicated by the dashed circles of Figures~\ref{f:c4leg},
  \ref{f:b4leg} and \ref{f:a4leg} cannot be started.  By accident,
  this assumption on the framing of the central vertex implies no
  constraint on the holomorphic result, since a normal surface
  singularity with $\bfq$HD smoothing is necessarily rational, hence
  the resolution graph does not admit a vertex for which the absolute
  value of the framing is strictly less than the valency of the vertex
  minus 1. The question of whether the Milnor fillable contact
  structure on the link of a normal surface singularity with
  star-shaped resolution tree, at least four branches and central
  framing $-2$ admits a weak symplectic $\bfq$HD filling is still
  open.

\noindent {\bf {(b)}} The above theorems concern exclusively the cases
when the resolution graph is star-shaped. No example of a normal
surface singularity with non-star-shaped minimal good resolution graph
which admits a $\bfq$HD smoothing is known. Partial results regarding
the nonexistence of $\bfq$HD smoothings follow from \cite{GaS, SSW,
  Wahluj}, but the lack of a convenient and general compactifying
divisor prevents us from treating the general case with methods
similar to the ones applied in the present paper.}
\end{rems}

The idea of the proof of the main results can be summarized as
follows. First of all, the implication $(1)\Rightarrow (2)$ in both
theorems follows from the general principle that any smoothing of a
singularity is a weak symplectic filling of the Milnor fillable
contact structure on the link of the singularity. The implication
$(3)\Rightarrow (1)$ (which was mostly already verified in
\cite{SSW}) in both statements requires the construction of $\bfq$HD
fillings; in the cases not covered by \cite{SSW} we will apply the
method of smoothings of negative weight. In order to prove
$(2)\Rightarrow (3)$ we need to show that for any star-shaped
resolution graph outside ${\mathcal {QHD}}^3$ and ${\mathcal {QHD}}^4$
the Milnor fillable contact structure admits no symplectic $\bfq$HD
filling.  These nonexistence results rely on deep symplectic geometric
theorems (most importantly on McDuff's result regarding symplectic
manifolds containing symplectic spheres of self-intersection number $1$)
and tedious combinatorial arguments. In principle these arguments
could be extended to classify other types of symplectic fillings, but
the combinatorics (which is already quite delicate for the case of
$\bfq$HD fillings) can become extremely complex to handle.

Finally a few words about the use of symplectic geometry. In order to
show that certain singularities \emph{do not} admit $\bfq$HD
smoothings, we will apply the following strategy: first we will
construct a fixed symplectic manifold for the singularity at hand
(which we will call the \emph{compactifying divisor}) and glue the
hypothesized $\bfq$HD weak symplectic filling to it in a symplectic
manner.  In the resulting \emph{closed} symplectic manifold we then
locate a curve configuration, which will lead to some geometric
contradiction unless the singularity had resolution tree from
${\mathcal {QHD}}^3$ or ${\mathcal {QHD}} ^4$.  Although both the
compactifying divisor and the hypothesized smoothing are holomorphic
objects, we do not know any holomorphic way to glue them together to
obtain a globally holomorphic closed manifold, on which then
algebro-geometric methods would be applicable. An alternative,
algebraic geometric compactification of the smoothings can be achieved
by applying the method of deformations of `weight less than or equal
to zero'. As we were informed by J. Wahl \cite{personal}, the
necessary results can be proved using delicate methods of complex
algebraic geometry and singularity theory.  From that point on, the
adaptation of our combinatorial arguments follow in a fairly
straightforward manner. We decided to use the symplectic geometric
methods, since in this way the resulting theorem becomes stronger in
the aspect of getting obstructions even for the existence of $\bfq$HD
weak fillings. Also, by completing the arguments in the symplectic
setting, our result shows yet another instance where objects behave in
a parallel manner in the complex analytic and in the symplectic
category.

The paper is organized as follows. In Section~\ref{s:prelim} the
symplectic geometric preliminaries used in the proofs of the main
results are listed, together with a quick outline of the ideas
employed in the later arguments.  Section~\ref{s:threeleg} deals with
small Seifert singularities, i.e., with those singularities which have
star-shaped minimal good resolution graphs with three branches.
Finally, in Section~\ref{s:fourleg}, we address the general case of
spherical Seifert singularities.

{\bf Acknowledgements:} AS was supported by the Clay Mathematics
Institute and by OTKA T67928. Both authors acknowledge support by
Marie Curie TOK project BudAlgGeo. We would like to thank Ron Stern
for many useful correspondences and Jonathan Wahl for helping us with
the smoothing theory of normal surface singularities and suggesting important
improvements and corrections in the text. Finally, we would like to
thank the anonymous referee for useful comments and corrections.

\section{Preliminaries}
\label{s:prelim}
\subsection{Symplectic geometric preliminaries}
Our results rely on the following fundamental theorem due to McDuff.

\begin{thm}(McDuff, \cite[Theorem~1.4]{M})
\label{thm:McDuff}
Let $(M, \omega)$ be a closed symplectic $4$-manifold. If $M$ contains
a symplectically embedded $2$-sphere $L$ of self-intersection number
$1$, then $M$ is a rational symplectic $4$-manifold. In particular,
$M$ becomes a the complex projective plane after blowing down a finite
collection of symplectic $(-1)$-curves away from $L$. \qed
\end{thm}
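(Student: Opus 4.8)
The plan is to prove this via Gromov's theory of pseudoholomorphic curves, following McDuff's original strategy. Since $L$ is symplectically embedded, I would first choose an $\omega$-compatible almost complex structure $J$ for which $L$ is $J$-holomorphic. The adjunction formula $2g-2 = [L]\cdot[L] - c_1\cdot[L]$ applied to the genus-zero sphere $L$ with $[L]^2=1$ gives $c_1\cdot[L] = 3$, so that $L$ numerically behaves exactly like a line in $\cpk$. The central object is then the moduli space $\mathcal{M}$ of unparametrized $J$-holomorphic spheres in the class $[L]$; its expected dimension is $2c_1([L]) - 2 = 4$, and by the automatic regularity of embedded $J$-holomorphic spheres of nonnegative self-intersection in dimension four (Hofer--Lizan--Sikorav), $\mathcal{M}$ is a smooth manifold of this dimension near its embedded points, for generic $J$ compatible with $\omega$ and making $L$ holomorphic.

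The next step is to extract from $\mathcal{M}$ the ``pencil of lines'' structure characteristic of $\cpk$. By positivity of intersections, any two distinct curves in $\mathcal{M}$ meet in exactly $[L]\cdot[L]=1$ point, transversally and positively; in particular two such curves cannot share two points. Imposing passage through two generic points of $M$ is a codimension-four condition, cutting $\mathcal{M}$ down to a zero-dimensional space, and a degree count for the evaluation map (equivalently, the genus-zero Gromov--Witten invariant of the class $[L]$, normalized against the model $\cpk$) shows this count equals one. Hence through any two generic points of $M$ there passes a \emph{unique} $J$-holomorphic sphere in $[L]$, exactly as lines in $\cpk$ do. This family sweeps out $M$ and begins to identify it with the projective plane.

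The principal difficulty, and the heart of the argument, is to control degenerations. By Gromov compactness, a sequence of curves in $\mathcal{M}$ converges to a cusp curve (stable map) whose irreducible components carry homology classes of positive symplectic area summing to $[L]$. Here I would combine positivity of intersection with $L$ and the adjunction inequality to show that any such splitting forces the extra components to be exceptional $(-1)$-spheres, disjoint from $L$. These exceptional spheres can be blown down symplectically, away from $L$, lowering $b_2(M)$. Inducting on $b_2$, I reduce to the case where no bubbling occurs, so the curves in $[L]$ form an honest pencil with a single base point; blowing up that base point produces a genuine foliation by square-zero spheres and exhibits $M$ (blown up once) as an $S^2$-bundle over $S^2$.

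The final step is to identify the resulting manifold as $\cpk$ and to rule out the other minimal models. Because $[L]^2=1$ is odd, the intersection form of $M$ is odd, which excludes the even form of $S^2\times S^2$ and of the ruled surfaces over positive-genus bases; combined with the $S^2$-bundle structure over $S^2$ obtained above, the minimal model is forced to be $\cpk$. Tracking the homology classes and the cohomology class of $\omega$ under the blow-downs, and invoking the uniqueness up to symplectomorphism of symplectic forms on rational surfaces within a fixed cohomology class, upgrades the diffeomorphism to a symplectomorphism and shows that the blown-down $(-1)$-curves can be taken disjoint from $L$. I expect the bubbling analysis of the previous paragraph --- ensuring that only $(-1)$-spheres split off and that the resulting blow-downs miss $L$ --- to be the main obstacle, since it is precisely there that the positivity and adjunction estimates must be combined with genuine care.
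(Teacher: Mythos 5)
First, a point of comparison: the paper does not prove this statement at all --- it is imported verbatim from McDuff's paper \cite[Theorem~1.4]{M} and used as a black box (hence the \qed in the statement itself), so the only meaningful benchmark is McDuff's original argument, whose broad outline your sketch indeed follows. Measured against that, your proposal has genuine gaps at exactly the points where the real work lies. The first is the existence count: as stated, ``the genus-zero Gromov--Witten invariant of the class $[L]$, normalized against the model $\cpk$'' is circular, since $\cpk$ is not available as a model before rationality of $M$ is established, and the GW framework presupposes precisely the compactness and transversality control that is at issue. The honest route (McDuff's) gets uniqueness of the curve through two generic points from positivity of intersections, and existence from the presence of the curve $L$ itself together with a properness argument for the evaluation map --- no comparison with an external model space occurs.

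The second and more serious gap is the bubbling step. Positivity of intersections with $L$ does give that in any cusp-curve limit the components' intersection numbers with $L$ are nonnegative and sum to $[L]\cdot[L]=1$, so all but one component is disjoint from $L$; and genericity plus adjunction give that simple embedded components have self-intersection $\geq -1$. What this does \emph{not} give is the exclusion of components of self-intersection $\geq 0$ disjoint from $L$. You cannot kill these with a light-cone or $b_2^+$ argument, because $b_2^+(M)=1$ is a \emph{consequence} of the theorem, not a hypothesis; nor can you invoke statements like Lemma~\ref{lem:exccurves} or Lemma~\ref{lem:nonexnonneg} (i.e., Lemmas 2.13 and 2.5 of \cite{BO}), since those are themselves deduced from McDuff's theorem. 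Ruling out such components requires a separate argument of comparable depth (a sphere of square $\geq 0$ disjoint from $L$ generates a family sweeping out $M$, hence forced to meet $L$, contradicting positivity), together with a treatment of multiply covered bubble components, which your sketch ignores. Finally, your last step --- invoking ``uniqueness up to symplectomorphism of symplectic forms on rational surfaces within a fixed cohomology class'' --- is also circular in this context: that uniqueness theorem is proved \emph{using} the structure theory you are trying to establish. Fortunately, for the way the present paper uses the theorem, only the diffeomorphism statement and the fact that the blown-down $(-1)$-curves can be taken disjoint from $L$ are needed, and those come directly from the pencil/bubbling analysis once the gap above is filled.
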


The following two lemmas are based on the above theorem of McDuff and the
details of the proofs can be found in \cite{BO}:

\begin{lem} (Cf.\ \cite[Lemma 2.13]{BO})
\label{lem:exccurves}
Let $(M, \omega)$ be a closed symplectic $4$-manifold containing a
symplectically embedded $2$-sphere $L$ of self-intersection number $1$
and a collection of symplectically immersed $2$-spheres $C_1, \dots,
C_k$. Suppose that $J$ is a tame almost complex structure for which
$L$, $C_1, \dots ,C_k$ are pseudoholomorphic. Then there exists at
least one $J$-holomorphic $(-1)$-curve in $M \setminus L$ unless
$L\cdot C_i>0$ and $C_i\cdot C_i = (L\cdot C_i)^2$ for all $i$. \qed
\end{lem}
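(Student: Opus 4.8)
The plan is to prove Lemma~\ref{lem:exccurves} by invoking McDuff's theorem (Theorem~\ref{thm:McDuff}) to put $(M,\omega)$ into a rational normal form, and then to analyze the $J$-holomorphic $(-1)$-curves that McDuff's blow-down procedure produces, keeping careful track of their intersections with $L$ and the $C_i$. Let me think through the key steps.

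First, I would observe that the presence of the symplectically embedded sphere $L$ with $L\cdot L=1$ lets me apply Theorem~\ref{thm:McDuff} directly. This tells me that $(M,\omega)$ is a rational symplectic $4$-manifold and, more usefully, that $M$ becomes $\cpk$ after blowing down a finite collection of symplectic $(-1)$-curves \emph{disjoint from $L$}. Fixing the tame almost complex structure $J$ for which $L$ and all the $C_i$ are pseudoholomorphic, I would arrange (using the standard fact that McDuff's blow-downs can be realized $J$-holomorphically, or by perturbing $J$ to a generic structure agreeing with the given one near the relevant curves) that this collection of $(-1)$-spheres is $J$-holomorphic. If this collection is nonempty, then we have immediately produced at least one $J$-holomorphic $(-1)$-curve in $M\setminus L$, and the conclusion holds. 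So the entire content of the lemma concentrates on the \emph{case where McDuff's procedure requires no blow-downs}, i.e. where $(M,\omega)$ is already symplectomorphic to $\cpk$ with $L$ a line.

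The second, main step is therefore to analyze the situation $M=\cpk$ directly and show that in this case the dichotomy of the lemma forces the intersection conditions $L\cdot C_i>0$ and $C_i\cdot C_i=(L\cdot C_i)^2$. Here I would use that $H_2(\cpk;\Z)=\Z\langle h\rangle$ with $h\cdot h=1$, and that the sphere $L$ represents the generator $h$ (since $L\cdot L=1$ and $L$ is a symplectic sphere in the minimal rational manifold). Each immersed sphere $C_i$ represents a class $d_i h$ for some integer $d_i$, so that $L\cdot C_i=d_i$ and $C_i\cdot C_i=d_i^2=(L\cdot C_i)^2$ automatically. The positivity $L\cdot C_i>0$ should follow from positivity of intersections of distinct $J$-holomorphic curves: since both $L$ and $C_i$ are $J$-holomorphic, $L\cdot C_i\ge 0$, and $L\cdot C_i=0$ would force $d_i=0$, making $C_i$ a null-homologous $J$-holomorphic sphere in $\cpk$, which is impossible for a nonconstant curve by positivity of the symplectic area $\int_{C_i}\omega=d_i\int_L\omega>0$. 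Thus $d_i\ge 1$ and the intersection relations hold for every $i$, completing the case analysis.

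The hard part, I expect, will be the bookkeeping that justifies reducing cleanly to these two cases --- specifically, ensuring that the $(-1)$-curves delivered by McDuff's theorem can be taken $J$-holomorphic for the \emph{given} $J$ (rather than merely for some auxiliary generic almost complex structure) and lie genuinely in $M\setminus L$, and that the homology-class identification $[L]=h$ survives the blow-down correspondence. Since the detailed verification of these points is carried out in \cite{BO}, I would cite that reference for the technical parts and present only the structural argument above; the essential geometric idea is simply that McDuff's classification leaves no room for an exceptional sphere disjoint from $L$ unless every $C_i$ meets $L$ positively with $C_i\cdot C_i=(L\cdot C_i)^2$, which is exactly the adjunction/homology constraint forced once $M$ is minimal of Picard rank one.
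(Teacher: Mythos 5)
Your proposal has a genuine gap, and it sits exactly where the whole content of the lemma lives. The lemma must produce a $(-1)$-curve that is holomorphic for the \emph{given} tame $J$ --- an almost complex structure that is forced to be non-generic, since it keeps the immersed spheres $C_1,\dots,C_k$ pseudoholomorphic. (This is essential for the paper's applications, where $J$ is only generic \emph{among} the structures making the whole compactifying divisor holomorphic.) McDuff's Theorem~\ref{thm:McDuff} supplies symplectic $(-1)$-spheres away from $L$, i.e.\ curves that are holomorphic for \emph{some} auxiliary (generic) tame structure, and neither of your two proposed fixes bridges this. ``McDuff's blow-downs can be realized $J$-holomorphically'' for an arbitrary prescribed tame $J$ is not a standard fact --- it is essentially the statement being proved. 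And perturbing $J$ to a generic $J'$ agreeing with $J$ near $L\cup C_1\cup\dots\cup C_k$ produces a $J'$-holomorphic $(-1)$-curve, which does not verify the conclusion for $J$; no soft argument lets you transport it back. If the lemma only needed to hold for generic $J$, it would indeed be an immediate corollary of Theorem~\ref{thm:McDuff}, but it would then be useless in this paper.

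The missing idea (which is how \cite{BO} actually argues, and note that the present paper offers no proof beyond that citation) is a Gromov-invariant/compactness argument valid for \emph{every} tame $J$: if $M\not\cong\cpk$, then under McDuff's identification $[L]$ is the line class and there is an exceptional homology class $e$ with $e\cdot [L]=0$, $e\cdot e=K\cdot e=-1$, whose Gromov invariant is nonzero; hence the given $J$ admits a stable genus-zero representative of $e$. Positivity of intersections with the $J$-holomorphic sphere $L$ shows first that $L$ cannot occur as a component and then that every component is disjoint from $L$; by Lemma~\ref{lem:nonex} each component is an embedded sphere, by Lemma~\ref{lem:nonexnonneg} each has negative square, and writing $e=\sum a_iA_i$ with $a_i>0$, the identity $K\cdot e=-1$ together with $K\cdot A_i=-2-A_i\cdot A_i\geq 0$ whenever $A_i\cdot A_i\leq -2$ forces some component to be a $J$-holomorphic $(-1)$-curve in $M\setminus L$. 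Your $\cpk$ endgame (classes $d_ih$, so $C_i\cdot C_i=(L\cdot C_i)^2$ automatically, and $d_i>0$ by positivity of area) is correct, but it is the trivial homological half; your proposal has the weight of the proof inverted, treating as ``bookkeeping'' the step that constitutes the actual theorem.
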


\begin{lem} (\cite[Lemma 2.5]{BO})
\label{lem:nonex}
 Let $M$ be a closed symplectic $4$-manifold containing a
 symplectically embedded $2$-sphere $L$ of self-intersection number
 $1$.  If $C$ is an irreducible singular or higher genus
 pseudoholomorphic curve in $M$, then $C\cdot L\geq 3$. In particular
 there are no irreducible singular or higher genus pseudoholomorphic
 curves in $M \setminus L$. \qed
\end{lem}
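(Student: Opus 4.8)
The plan is to combine McDuff's structure theorem (Theorem~\ref{thm:McDuff}) with the adjunction (genus) inequality for pseudoholomorphic curves and a short homological computation. First I would apply Theorem~\ref{thm:McDuff} to the symplectically embedded sphere $L$ with $L\cdot L=1$: this shows that $M$ is a rational symplectic $4$-manifold, obtained from $\cpk$ by a sequence of symplectic blow-ups performed away from $L$. Fixing such a blow-down, I identify $H_2(M;\Z)$ with the standard basis $H,E_1,\dots,E_k$ (with $H^2=1$, $E_i^2=-1$, $H\cdot E_i=E_i\cdot E_j=0$ for $i\neq j$), where $H$ is the line class and the $E_i$ are the exceptional classes. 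Since the blow-ups are performed away from $L$, the image of $L$ in $\cpk$ is a symplectic sphere of self-intersection $1$, hence a line, so that $[L]=H$; the first Chern class is $c_1(M)=3H-\sum_i E_i$. I would then choose a tame almost complex structure $J$ for which both $L$ and the given curve $C$ are pseudoholomorphic; as $C$ is singular or of higher genus it cannot coincide with the embedded sphere $L$.

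Writing $[C]=dH+\sum_i b_i E_i$, one has $d=[C]\cdot[L]=C\cdot L$, $C\cdot C=d^2-\sum_i b_i^2$ and $c_1(M)\cdot C=3d+\sum_i b_i$. The key input is the adjunction inequality: the virtual genus $g_v=1+\tfrac12\big(C\cdot C-c_1(M)\cdot C\big)$ equals $g(C)+\delta(C)$, where $g(C)$ is the geometric genus and $\delta(C)\geq 0$ records the singularities. Because $C$ is singular ($\delta(C)\geq 1$) or of higher genus ($g(C)\geq 1$), we get $g_v\geq 1$, hence $C\cdot C\geq c_1(M)\cdot C$. Substituting the expressions above yields
\[
d(d-3)\;\geq\;\sum_i b_i(b_i+1)\;\geq\;0,
\]
the last inequality holding because $b_i(b_i+1)\geq 0$ for every integer $b_i$. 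Therefore $d\leq 0$ or $d\geq 3$.

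It then remains to exclude $d\leq 0$. Positivity of intersections for the distinct irreducible $J$-holomorphic curves $C$ and $L$ gives $d=C\cdot L\geq 0$, so the only case left is $d=0$. But $d=0$ forces $d(d-3)=0$, whence $\sum_i b_i(b_i+1)=0$ and each $b_i\in\{0,-1\}$; thus $[C]=-\sum_{i\in S}E_i$ for some necessarily non-empty index set $S$, and then $\omega([C])=-\sum_{i\in S}\omega(E_i)<0$, contradicting the fact that a non-constant $J$-holomorphic curve has positive symplectic area. Hence $d\geq 3$, that is $C\cdot L\geq 3$. Finally, if such a curve $C$ were contained in $M\setminus L$ it would be disjoint from $L$ and so satisfy $C\cdot L=0$, contradicting $C\cdot L\geq 3$; this proves the last assertion. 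I expect the main obstacle to be the bookkeeping around McDuff's theorem — in particular justifying the normalization $[L]=H$ and excluding the borderline degree $d=0$, since the adjunction inequality by itself leaves this case open and it must be ruled out using the positivity of symplectic area.
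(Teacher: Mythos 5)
Your proof is correct, and there is nothing in the paper to compare it against line by line: the paper does not prove this lemma but quotes it from \cite{BO}, noting only that it is ``based on'' McDuff's Theorem~\ref{thm:McDuff}, and your argument --- McDuff's theorem to normalize $[L]=H$ and $c_1(M)=3H-\sum_i E_i$, the adjunction (genus) formula to get $d(d-3)\geq\sum_i b_i(b_i+1)\geq 0$, positivity of intersections to get $d\geq 0$, and positivity of symplectic area to kill the borderline case $d=0$ --- is exactly the standard route that the cited source follows. One point of reading rather than a gap: since $C$ is handed to you as $J$-holomorphic, you cannot afterwards ``choose'' a tame $J$ making $L$ pseudoholomorphic too; the existence of a common tame $J$ for $L$ and $C$ must be understood as part of the hypothesis, which is how the lemma is used throughout this paper (cf.\ Lemma~\ref{lem:exccurves} and Corollary~\ref{c:nocycle}), and it enters your argument only through the positivity-of-intersections step $d\geq 0$ --- in particular the final assertion about curves in $M\setminus L$ needs no common $J$ at all, since disjointness already forces $d=0$ and your area argument then gives the contradiction directly.
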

This lemma has the following simple corollary.
\begin{cor}\label{c:nocycle}
 Let $M$ be a closed symplectic $4$-manifold containing a
 symplectically embedded $2$-sphere $L$ of self-intersection number
 $1$. Then there is no cycle of pseudoholomorphic spheres in the
 complement $L$.
\end{cor}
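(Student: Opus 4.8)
The plan is to reduce the statement to Lemma~\ref{lem:nonex} by smoothing the cycle into a single irreducible pseudoholomorphic curve of positive genus. Suppose, for contradiction, that $C_1, \dots, C_n$ form a cycle of pseudoholomorphic spheres contained in $M \setminus L$; thus each $C_i$ is an embedded $J$-holomorphic sphere, consecutive spheres $C_i$ and $C_{i+1}$ (indices taken modulo $n$) meet transversally in a single point, and no $C_i$ meets $L$. Since $L$ is $J$-holomorphic and disjoint from each $C_i$, positivity of intersections gives $[C_i]\cdot [L]=0$, and hence for the total class $A=\sum_{i=1}^n [C_i]$ we have $A\cdot [L]=0$.

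The key geometric input is that the union $C_1\cup\dots\cup C_n$ is a nodal symplectic curve whose dual graph is an $n$-cycle, so its arithmetic genus equals $\sum_i g(C_i) + (\#\,\text{nodes}) - (\#\,\text{components}) + 1 = 0 + n - n + 1 = 1$. Because consecutive spheres intersect positively and transversally, I would apply the standard symplectic resolution of transverse double points (Gompf's smoothing of intersections, as used in the symplectic sum construction): each of the $n$ nodes is replaced by a symplectically embedded annulus, producing a connected symplectically embedded surface $C\subset M\setminus L$ of genus $1$ with $[C]=A$.

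Next I would arrange $C$ to be pseudoholomorphic. Since $C$ and $L$ are disjoint symplectic surfaces, there is a tame almost complex structure $J'$ on $M$ for which both are $J'$-holomorphic. Then $C$ is an irreducible pseudoholomorphic curve of genus $1$ lying in the complement $M\setminus L$. This directly contradicts Lemma~\ref{lem:nonex}, which asserts that $M\setminus L$ contains no irreducible higher genus pseudoholomorphic curves (equivalently, that $C\cdot L\geq 3$ for any such $C$, whereas here $C\cdot L = A\cdot [L]=0$). This contradiction proves the corollary.

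The step I expect to be most delicate is the symplectic smoothing: one must check that resolving all $n$ nodes simultaneously can be carried out within a neighbourhood of the configuration that is disjoint from $L$, so that the resulting genus-$1$ surface stays in $M\setminus L$ and represents the class $A$. This is where positivity of intersections (guaranteeing transverse positive nodes) and the locality of the resolution procedure are essential; away from that, the argument is purely formal and reduces everything to the already established Lemma~\ref{lem:nonex}.
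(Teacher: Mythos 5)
Your proof is correct and follows essentially the same route as the paper: the paper's (one-sentence) argument also glues the components of the cycle around the nodes to produce an embedded pseudoholomorphic curve of genus $1$ in the complement of $L$, contradicting Lemma~\ref{lem:nonex}. Your version merely fills in the details the paper leaves implicit --- the arithmetic genus count, the local symplectic smoothing of the transverse nodes away from $L$, and the choice of a new tame almost complex structure making the smoothed surface and $L$ simultaneously pseudoholomorphic.
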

\begin{proof}
If such a cycle existed, by gluing adjacent components around the
nodes we would be able to construct an embedded pseudoholomorphic
curve of genus $1$ which would contradict Lemma~\ref{lem:nonex}.
\end{proof}
Another fact which we will frequently use is that for any almost
complex structure $J$ on a 4-manifold $X$ any intersection point of
two $J$-holomorphic curves $C_1$ and $C_2$ contributes positively to
the algebraic intersection number $C_1\cdot C_2$.

The next lemma
easily follows from McDuff's Theorem~\ref{thm:McDuff}.
\begin{lem} \label{lem:nonexnonneg}
Let $M$ be a closed symplectic $4$-manifold containing a
symplectically embedded $2$-sphere $L$ of self-intersection number
$1$. Then there is no symplectically embedded sphere of nonnegative
self intersection number in the complement of $L$.
\end{lem}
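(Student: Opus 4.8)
The plan is to argue by contradiction, invoking McDuff's structure theorem to pin down the ambient manifold and then use the classification of homology classes represented by embedded spheres on a rational surface. Suppose $S$ is a symplectically embedded sphere with $S\cdot S\geq 0$ lying in the complement of $L$. Since $S$ and $L$ are disjoint, we have $S\cdot L=0$. Fixing a tame almost complex structure $J$ making both $L$ and $S$ pseudoholomorphic, Theorem~\ref{thm:McDuff} tells us that $M$ is a rational symplectic $4$-manifold, and that after blowing down a finite collection of symplectic $(-1)$-curves disjoint from $L$, we obtain $\cpk$ with $L$ becoming a line (or rather, a sphere of square $1$, i.e. a line in the standard class $h$).

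The key step is to control what happens to $S$ under this blow-down. First I would observe that since $S\cdot S\geq 0$, by the adjunction/positivity machinery a symplectic sphere of nonnegative square must itself be, up to the exceptional curves, homologous to something incompatible with being disjoint from $L$. Concretely, I would run the blow-down: each symplectic $(-1)$-curve we blow down is disjoint from $L$ but may meet $S$. After blowing everything down we land in $\cpk$, where $L$ represents the positive generator $h\in H_2(\cpk;\Z)$ with $h\cdot h=1$. The image of $S$ represents some class $dh$ with $d\geq 0$, and its intersection with $L$ satisfies $h\cdot(dh)=d$. But intersection numbers can only \emph{increase} under blowing down (since we add back the nonnegative contributions $S\cdot E_i\geq 0$ along the way, and these are nonnegative by positivity of intersections of $J$-holomorphic curves). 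Hence $0=S\cdot L$ in $M$ forces $d=0$ in $\cpk$; but a pseudoholomorphic sphere of nonnegative self-intersection cannot represent the zero class on $\cpk$, and indeed any nontrivial pseudoholomorphic sphere there has $d\geq 1$ and meets the line $L$ positively. This is the contradiction.

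The main obstacle I anticipate is making the "intersection with $L$ is preserved or increased under blow-down" step fully rigorous while ensuring $S$ remains $J$-holomorphic and does not get contracted. The blow-downs of Theorem~\ref{thm:McDuff} are taken away from $L$, so $L$ survives as an embedded sphere of square $1$; I must check that $S$ is not one of the blown-down $(-1)$-curves (it is not, since $S\cdot S\geq 0>-1$) and that its image remains a nontrivial pseudoholomorphic curve. Given positivity of intersections, every $J$-holomorphic curve meeting $L$ does so nonnegatively, so the surviving image of $S$ in $\cpk$ has $S'\cdot L\geq S\cdot L=0$ only after accounting for contributions, and the real content is that on $\cpk$ the line $L$ meets \emph{every} nonconstant pseudoholomorphic sphere, forcing $S'\cdot L>0$ and hence $S\cdot L>0$ already in $M$ — contradicting disjointness. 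An alternative and perhaps cleaner route, which I would pursue in parallel, is to apply Lemma~\ref{lem:exccurves} directly with the single curve $C_1=S$: disjointness gives $L\cdot S=0$, so the exceptional condition $L\cdot C_1>0$ fails, producing a $J$-holomorphic $(-1)$-curve in $M\setminus L$; iterating the blow-down of such curves eventually strands $S$ as a sphere of nonnegative square meeting the line $L$ in $\cpk$ with intersection number zero, which is impossible. This reduces the lemma to the already-available Lemma~\ref{lem:exccurves} together with the elementary geometry of spheres in $\cpk$, and I expect this to be the shortest path.
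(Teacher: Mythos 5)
Your argument is correct in substance but takes a genuinely different, and considerably heavier, route than the paper. The paper's proof is essentially two lines of homology: by Theorem~\ref{thm:McDuff} the manifold $M$ is rational, hence $b_2^+(M)=1$; since $[L]^2=1>0$, the orthogonal complement of $[L]$ in $H_2(M;\R)$ is negative definite, so any class orthogonal to $[L]$ with nonnegative square must vanish --- but a symplectically embedded sphere of any square (including $0$) is homologically essential, since $\omega$ evaluates positively on it. No blow-downs and no tracking of pseudoholomorphic images are needed. Your route instead follows the hypothetical sphere $S$ through the blow-down to $\cpk$ (or, in your alternative, iterates Lemma~\ref{lem:exccurves}); this can be made to work, and the exccurves variant is the cleaner of the two, but it requires the additional machinery, used elsewhere in the paper, of choosing a new tame almost complex structure after each blow-down so that the image of $S$ stays pseudoholomorphic (\cite[Lemma~4.1]{OO}), and of checking that this image remains a nonconstant immersed sphere. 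What your approach buys is a concrete geometric picture in $\cpk$; what it costs is bookkeeping that the signature argument avoids entirely.

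One step in your first variant is misstated and needs repair. Under blowing down a $(-1)$-curve $E$, intersection numbers change by $S'\cdot L' = S\cdot L + (S\cdot E)(L\cdot E)$, not by adding $S\cdot E$ alone; moreover ``intersection numbers can only increase'' cannot, by itself, force $d=0$ --- it would only give $d\geq 0$, which is no contradiction. The correct point, which you implicitly have available, is that the blow-downs of Theorem~\ref{thm:McDuff} are performed \emph{away from} $L$, so $L\cdot E_i=0$ and every correction term vanishes identically; hence $S'\cdot L'=S\cdot L=0$ exactly, while any nonconstant pseudoholomorphic sphere in $\cpk$ represents $d[\cp^1]$ with $d\geq 1$ and therefore meets $L$ positively. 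In the exccurves variant this issue does not arise: set-theoretic disjointness from $L$ is preserved because the blow-down map is a diffeomorphism near $L$, the iteration must terminate since $b_2$ drops with each blow-down, and yet Lemma~\ref{lem:exccurves} keeps producing $(-1)$-curves disjoint from $L$ as long as the image of $S$ survives with $L\cdot S'=0$; in $\cpk$ itself there is no class of square $-1$, giving the contradiction.
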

\begin{proof}
Since $M$ is rational, it follows that $b_2^+(M)=1$, immediately
implying the lemma. (Notice that a symplectic sphere of any
self-intersection --- including $0$ --- is homologically essential.)
\end{proof}
\begin{lem} \label{l:sing}
Suppose that $C\subset \cpk$ is a $J$-holomorphic curve for some tame
almost complex structure $J$, in the homology class $[C]=d[\cp ^1]$,
and $C$ has at least two singular points. Then $d\geq 4$.
\end{lem}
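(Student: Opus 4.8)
The plan is to deduce the bound from the adjunction formula for $J$--holomorphic curves in $\cpk$. Throughout I treat $C$ as the image of a \emph{simple} (somewhere injective) $J$--holomorphic map $u\colon \Sigma \to \cpk$ from a connected Riemann surface $\Sigma$ of geometric genus $g$; this irreducibility is essential, since otherwise three generic complex lines would form a reducible degree--$3$ curve with three nodes, violating the asserted inequality.

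Recall that for such a curve the generalized adjunction formula reads
$$[C]\cdot[C]-c_1(\cpk)\cdot[C]=2g-2+2\delta,$$
where $\delta\geq 0$ is the sum of the (strictly positive) local contributions of the singular points of $C$, with $\delta=0$ precisely when $C$ is embedded. Using $[C]=d[\cp^1]$, so that $[C]\cdot[C]=d^2$ and $c_1(\cpk)\cdot[C]=3d$, this rearranges to
$$\delta=\frac{(d-1)(d-2)}{2}-g\leq \frac{(d-1)(d-2)}{2},$$
the last inequality holding because $g\geq 0$.

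Next I would invoke the local structure theory of $J$--holomorphic singularities (Micallef--White), according to which each genuine singular point of $C$ is, after a local homeomorphism, modeled on a holomorphic singularity and hence contributes at least $1$ to $\delta$. As $C$ has at least two singular points, this yields $\delta\geq 2$, so that $(d-1)(d-2)\geq 4$. Inspecting $d=1,2,3$, which give $(d-1)(d-2)=0,0,2$, respectively, all strictly less than $4$, rules these out, while $d=4$ gives $(d-1)(d-2)=6\geq 4$; hence $d\geq 4$, as claimed.

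The step requiring the most care is the justification of the adjunction formula for a general tame (non-integrable) almost complex structure $J$, together with the positivity $\delta_p\geq 1$ at each singular point $p$ (whether it arises from a branch point where $du=0$ or from a self-intersection). Both are by now standard consequences of the work of McDuff and of Micallef--White on the local behaviour and positivity of intersections of $J$--holomorphic curves; once these are in place, the computation above is immediate.
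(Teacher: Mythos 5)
Your proof is correct, but it takes a genuinely different route from the paper. The paper's argument is a one-liner of a projective-geometry flavour: for a tame $J$ on $\cpk$ there is (by Gromov--McDuff theory of rational symplectic $4$-manifolds) a $J$-holomorphic line through any two given points; taking the line $\Li$ through the two singular points of $C$, positivity of intersections gives local intersection multiplicity at least $2$ at each singular point (a singular point of $C$ has multiplicity $\geq 2$), so $d=\Li\cdot C\geq 4$. Note this argument, like yours, implicitly needs $C$ irreducible --- otherwise $\Li$ could be a component of $C$ (e.g.\ three generic lines, $d=3$, three nodes), exactly the failure mode you flagged. Your route instead goes through the adjunction formula $\delta=\tfrac{(d-1)(d-2)}{2}-g$ with Micallef--White positivity $\delta_p\geq 1$ at each singular point, giving $(d-1)(d-2)\geq 4$ and hence $d\geq 4$. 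The trade-off: the paper's proof leans on the existence and intersection theory of $J$-holomorphic lines in $\cpk$ (natural here, since the surrounding arguments already use exactly this machinery), and its conclusion sharpens to $d\geq m_1+m_2$ for the multiplicities of the two singular points; your proof leans on the adjunction/genus formula instead, avoids producing any auxiliary curve, and yields the refinement $\tfrac{(d-1)(d-2)}{2}\geq g+2$, which also penalizes positive genus. Both rest on foundational local results of McDuff and Micallef--White, so neither is more elementary; yours is just as rigorous, merely longer than the paper's single sentence.
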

\begin{proof}
The $J$-holomorphic line passing through two singular points
intersects $C$ with multiplicity at least 4, providing the result.
\end{proof}
We record here the following fact which we will apply repeatedly in
the sequel: By the adjunction formula, a pseudoholomorphic rational
curve representing the class $3 [\cp ^1]$ in $\cpk$ must be either
immersed with exactly one node (that is a point where two branches of
the curve intersect transversely) or it must have exactly one
nonimmersed point which is necessarily a $(2,3)$-cusp
singularity. (Here a pseudoholomorphic curve in a $4$-manifold is said
to have a $(2,3)$-cusp singularity if there is a parametrization
around the singular point in which the curve has the form
$(z^2,z^3)+O(4)$, see \cite{M2}.) In conclusion, the link of such a curve
around its singular point is either connected (and is the trefoil knot)
or has two components (and is the Hopf link).

\subsection{The families $\fra, \frb$ and $\frc$}
\label{ss:fam}
The three inductively defined families $\fra , \frb, \frc$ of graphs
found in \cite{SSW} will play a central role in our subsequent
arguments.  For the sake of completeness, we shortly recall the definition of
these families below.

Let us define $\fra$ as the family of graphs we get in the following
way: start with the graph of Figure~\ref{f:alap}(a), blow up its
$(-1)$-vertex or any edge emanating from the $(-1)$-vertex and
repeat this procedure of blowing up (either the new $(-1)$-vertex or
an edge emanating from it) finitely many times, and finally modify the
single $(-1)$-decoration to $(-4)$. Depending on the number and
configuration of the chosen blow-ups, this procedure defines an
infinite family of graphs. Define $\frb$ similarly, this time starting with
Figure~\ref{f:alap}(b) and substituting $(-1)$ in the last step with
$(-3)$, and finally define $\frc$ in the same vein by starting with
Figure~\ref{f:alap}(c) and putting $(-2)$ in the place of $(-1)$ in the
final step.
\begin{figure}[ht]
\begin{center}
\setlength{\unitlength}{1mm}
\unitlength=0.5cm
\begin{graph}(14,4.5)(0,-3)
\graphnodesize{0.2}

 \roundnode{m1}(-2,0)
 \roundnode{m2}(0,0)
 \roundnode{m3}(2,0)
 \roundnode{m4}(0,-2)

 \roundnode{m5}(12,0)
 \roundnode{m6}(14,0)
 \roundnode{m7}(16,0)
 \roundnode{m8}(14,-2)

 \roundnode{m9}(5,0)
 \roundnode{m10}(7,0)
 \roundnode{m11}(9,0)
 \roundnode{m12}(7,-2)

\edge{m1}{m2}
\edge{m2}{m3}
\edge{m2}{m4}

\edge{m5}{m6}
\edge{m6}{m7}
\edge{m6}{m8}

\edge{m9}{m10}
\edge{m10}{m11}
\edge{m10}{m12}

\freetext(0,-3){(a)}
\freetext(14,-3){(c)}
\freetext(7,-3){(b)}

  \autonodetext{m1}[n]{{\small $-3$}}
  \autonodetext{m2}[n]{{\small $-1$}}
  \autonodetext{m3}[n]{{\small $-3$}}
  \autonodetext{m4}[e]{{\small $-3$}}

  \autonodetext{m5}[n]{{\small $-2$}}
  \autonodetext{m6}[n]{{\small $-1$}}
  \autonodetext{m7}[n]{{\small $-6$}}
  \autonodetext{m8}[e]{{\small $-3$}}

  \autonodetext{m9}[n]{{\small $-2$}}
  \autonodetext{m10}[n]{{\small $-1$}}
  \autonodetext{m11}[n]{{\small $-4$}}
  \autonodetext{m12}[e]{{\small $-4$}}
\end{graph}
\end{center}
\caption{\quad Nonminimal plumbing trees giving rise to the families $\fra,
  \frb$ and $\frc$.}
\label{f:alap}
\end{figure}

The starting point of the proofs of Theorems~\ref{t:main} and
\ref{t:main2} rests on the main result of \cite{SSW} which can be
summarized as follows.  Recall the definitions of $\frw, \farm, \frn$
from Remark~\ref{r:wnm} and let $\frg$ denote the set of plumbing
chains with framings determined by the negatives of the continued
fraction coefficients of the rational numbers of the form
$\frac{p^2}{pq-1}$ for all $0<q<p$ and $(p,q)=1$.

\begin{thm}(\cite{SSW})\label{t:ssw}
Suppose that $\Gamma$ is a minimal, negative definite plumbing
tree. If it gives rise to a surface singularity $S_{\Gamma}$ admitting
a $\bfq$HD smoothing, or if the Milnor fillable contact structure on
the corresponding plumbing 3-manifold $Y_{\Gamma}$ admits a $\bfq$HD
filling then $\Gamma$ is in $\frg \cup \frw \cup \frn \cup \farm \cup
\fra \cup \frb \cup \frc$. \qed
\end{thm}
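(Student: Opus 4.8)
The plan is to turn a hypothetical $\bfq$HD filling into a closed negative definite $4$-manifold and then invoke Donaldson's diagonalization theorem, reducing the statement to a purely lattice-theoretic classification problem. First I would let $W$ denote either the $\bfq$HD smoothing of $S_{\Gamma}$ or the hypothesized $\bfq$HD filling of $(Y_{\Gamma},\xi_{\G})$; in both cases $\partial W = Y_{\Gamma}$ and $H_*(W;\bfq)\cong H_*(D^4;\bfq)$. On the other side, the minimal resolution (equivalently, the plumbing) $\xg$ is a negative definite $4$-manifold with $\partial \xg = Y_{\Gamma}$ and with $b_2(\xg)$ equal to the number $s$ of vertices of $\Gamma$. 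These share their oriented boundary, so reversing one orientation and gluing produces a closed oriented $4$-manifold $Z = \xg \cup_{Y_{\Gamma}} \overline{W}$.

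Since $Y_{\Gamma}$ is a rational homology sphere and $W$ is a $\bfq$HD, a Mayer--Vietoris computation shows that $W$ contributes nothing to $H_2(Z;\bfq)$, so $b_2(Z)=s$ and the intersection form of $Z$ is carried, up to the rationally trivial contribution of $\overline{W}$, by the image of $H_2(\xg)$; in particular it is negative definite. By Donaldson's Theorem~A, the intersection form of $Z$ is therefore diagonalizable, i.e.\ isomorphic to $\langle -1\rangle^{s}$. Composing the inclusion $H_2(\xg)\hookrightarrow H_2(Z)$ with this isomorphism exhibits the plumbing lattice of $\Gamma$ as a full-rank sublattice of the standard diagonal lattice $(\Z^{s}, -\Id)$.

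The heart of the argument is then combinatorial. After a sign change of the basis vectors one assigns to each vertex $v_i$ (with framing $-d_i$) a vector in $\Z^{s}$ of square $d_i$, so that adjacent vertices are sent to vectors of inner product $1$ and non-adjacent vertices to orthogonal vectors. I would study how the $s$ standard basis elements $e_1,\dots,e_s$ are distributed among the $s$ vertices: because the embedding has full rank, each $e_k$ can appear with nonzero coefficient in only a controlled number of the $v_i$, and the tree structure forces these occurrences to propagate along edges. Working vertex by vertex, I would first dispose of the linear (chain) pieces --- where the analysis recovers exactly the continued-fraction chains of $\frg$ --- and then control the branch points, showing that any admissible full-rank embedding forces $\Gamma$ into one of the remaining families $\frw,\frn,\farm,\fra,\frb,\frc$.

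The main obstacle is precisely this last classification: ruling out \emph{every} negative definite tree not on the list. The natural device is a reduction/induction argument --- locating a suitable $(-1)$-pattern or a ``bad'' vertex whose presence either contradicts full-rank diagonalizability or permits a blow-down to a smaller admissible graph --- together with careful bookkeeping of the occupation numbers of the $e_k$. The star-shaped case is the most tractable, but the bound must be established uniformly over all trees, and it is the sheer number of branching configurations to be eliminated, rather than any single conceptual step, that makes the combinatorics delicate; this is exactly the portion carried out in detail in \cite{SSW}.
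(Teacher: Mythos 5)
Your proposal reconstructs exactly the argument behind this theorem: the paper itself gives no proof (it quotes the result from \cite{SSW} with a \qed), and the strategy of \cite{SSW} is precisely what you describe --- glue the hypothesized $\bfq$HD manifold to the negative definite plumbing along $Y_\Gamma$, apply Donaldson's Theorem A to the resulting closed negative definite $4$-manifold, and then classify the negative definite trees whose plumbing lattice admits a full-rank embedding into the standard diagonal lattice. Your sketch of the gluing, the Mayer--Vietoris/rank bookkeeping, and the lattice-embedding combinatorics matches that approach, with the genuinely hard combinatorial classification correctly attributed to \cite{SSW}.
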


\subsection{Outline of the proof of $(2)\Rightarrow (3)$ in the main theorems}

Suppose that $\G$ is a graph of the type considered in
Theorems~\ref{t:main} or~\ref{t:main2}.  Let $Y_\G$ denote the
associated plumbed $3$-manifold and $\xi_\G$ the unique Milnor
fillable contact structure on $Y_\G$. According to
Theorem~\ref{t:ssw}, if $(Y_\G,\xi_\G)$ admits a symplectic $\bfq$HD
filling then $\G$ must be in $\frw\cup \frn \cup \farm\cup \fra \cup
\frb \cup \frc$. Since by \cite[Section~8]{SSW} the singularities
corresponding to graphs in $\frw\cup \frn \cup \farm$ admit $\bfq$HD
smoothings, the corresponding links admit symplectic $\bfq $HD
fillings.  Therefore we only need to consider star-shaped graphs in
$\fra \cup \frb \cup \frc$; let $\G$ be such a graph with $s$ legs
$\ell _1, \ldots , \ell _s$ and with central framing $-b$. Suppose that
the framing coefficients along the leg $\ell _i$ are given by the
negatives of the continued fraction coefficients of
$\frac{n_i}{m_i}>1$. Consider then the ``dual'' graph $\G '$ which is
star-shaped with $s$ legs $\ell _1', \ldots , \ell _s'$, central
framing $b-s$, and framings along leg $\ell _i '$ given
by the negatives of the continued fraction coefficients of
$\frac{n_i}{n_i-m_i}$. Let $W_\G$ and $W_{\G '}$ denote the
corresponding plumbing 4-manifolds.

\begin{lem}[Cf., for example, \cite{SSW}]
Suppose that $\G, \G'$ are star-shaped plumbing trees as above.  The
boundary of $W_\G$ is orientation preserving diffeomorphic to the link
$Y_\G$, while $\partial W_{\G '}=-Y_{\G}$. In addition, $W_\G \cup
W_{\G'}$ is a 4-manifold diffeomorphic to $\cpk \# m \cpkk$ for some
positive integer $m$.
\end{lem}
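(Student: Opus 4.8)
The plan is to establish the three assertions separately, as the first two are essentially definitional and the third is the substantive claim. First I would recall that the link $Y_\G$ of the singularity is, by construction, the oriented boundary of the plumbing $4$-manifold $W_\G$ obtained by plumbing disk bundles over spheres according to $\G$; since $\G$ is negative definite, this is the standard identification and requires no work beyond citing the plumbing construction. For the second assertion, the key observation is that $\G'$ is the \emph{dual} graph in the sense that its legs are the Hirzebruch--Jung continued fractions of $\frac{n_i}{n_i-m_i}$ rather than $\frac{n_i}{m_i}$, and its central framing is $b-s$ rather than $-b$. The standard fact here is that the continued fraction expansions of $\frac{n_i}{m_i}$ and $\frac{n_i}{n_i-m_i}$ describe mutually ``dual'' linear plumbing chains, and the orientation reversal $\partial W_{\G'} = -Y_\G$ follows from the Seifert-fibred description of $Y_\G$ together with the fact that replacing each Seifert invariant by its complementary one (reflected in the passage $m_i \mapsto n_i - m_i$ and the adjustment of the central framing by $s$) reverses the fibration orientation.

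The heart of the matter is the third assertion, that $W_\G \cup_{Y_\G} W_{\G'}$ is diffeomorphic to $\cpk \# m \,\cpkk$ for some $m$. Here I would argue as follows. Both $W_\G$ and $W_{\G'}$ are simply connected with boundary $Y_\G$ (respectively $-Y_\G$), so the glued manifold $X = W_\G \cup W_{\G'}$ is a closed, simply connected, smooth $4$-manifold. I would first compute its intersection form via a Mayer--Vietoris argument: the homology of $X$ is assembled from the lattices of $\G$ and $\G'$, and the duality between the two graphs is designed precisely so that the resulting form is \emph{odd} and \emph{indefinite}, with signature $\sigma(X) = \sigma(W_\G) + \sigma(W_{\G'})$. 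The crucial point is that the dual pairing identifies the glued form as a unimodular odd form of the shape $\langle 1 \rangle \oplus m\langle -1 \rangle$; this is where the specific arithmetic of the dual continued fractions enters, and I expect this lattice computation to be the main obstacle. Once the intersection form is identified as $\langle 1\rangle \oplus m\langle-1\rangle$, an appeal to Freedman's classification shows $X$ is \emph{homeomorphic} to $\cpk \# m\,\cpkk$.

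To upgrade homeomorphism to diffeomorphism, I would not rely on Freedman but instead exhibit the gluing geometrically: the dual resolution/compactification realizes $X$ as an iterated blow-up of $\cpk$. Concretely, the star-shaped graph $\G$ together with its dual $\G'$ can be read as a plumbing description of a rational surface, and by successively blowing down $(-1)$-spheres (which the duality guarantees are present in abundance) one reduces the configuration to a single sphere of self-intersection $+1$, i.e.\ to $\cpk$. Each blow-down accounts for one $\cpkk$ summand, yielding $X \cong \cpk \# m\,\cpkk$ as a \emph{smooth} manifold, with $m = b_2(X) - 1 = \mathrm{rk}(W_\G) + \mathrm{rk}(W_{\G'}) - 1$. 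The value of $m$ is positive because the total rank exceeds $1$ whenever $\G$ has at least one leg, which holds in all cases under consideration. I expect that the bookkeeping in this blow-down — tracking exactly which spheres become $(-1)$-curves at each stage and verifying that the process terminates at $\cpk$ — is the delicate part, but it follows the standard pattern for plumbings of dual star-shaped graphs and so can be carried out routinely once the lattice-theoretic duality of the preceding step is in hand.
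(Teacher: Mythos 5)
Your proposal diverges from the paper's argument, and the divergence exposes a genuine gap in the step that carries all the content. The paper proves the lemma by a single top-down construction: start with the Hirzebruch surface ${\mathbb F}_b$, fix $s$ fibres, and blow up repeatedly so that the resulting rational surface visibly contains a configuration of curves intersecting according to $\G$ whose complementary configuration intersects according to $\G'$. The closed manifold is then a blow-up of ${\mathbb F}_b$, hence diffeomorphic to $\cpk \# m\cpkk$ from the outset, and the decomposition into $W_\G \cup W_{\G'}$, the boundary identifications, and (crucially for the rest of the paper) the $\omega$-convexity of the neighbourhood of the $\G$-configuration all fall out of this one construction. In your proposal, by contrast, the closed manifold is built abstractly by gluing, and its diffeomorphism type must then be recovered after the fact.

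The gap is in your final step. The Mayer--Vietoris/Freedman portion can only ever yield a \emph{homeomorphism}, which in dimension $4$ does not determine the smooth structure; you acknowledge this, but your proposed upgrade --- blowing down $(-1)$-spheres ``which the duality guarantees are present in abundance'' until a $+1$-sphere remains --- is asserted, not proved. Lattice-theoretic duality does not produce smoothly embedded $(-1)$-spheres in the glued manifold, and knowing that the intersection form is $\langle 1\rangle \oplus m\langle -1\rangle$ does not let you realize that splitting by smooth blow-downs; carrying this out honestly (say by Kirby calculus on the dual plumbings) is essentially the paper's Hirzebruch-surface construction run in reverse, i.e., it is the whole lemma rather than routine bookkeeping. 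A second, related omission: the smooth manifold $W_\G \cup W_{\G'}$ depends on the choice of gluing diffeomorphism of the boundaries, which your argument never pins down. The paper sidesteps this entirely because both pieces arise as complementary regular neighbourhoods inside one closed rational surface, so there is no gluing choice to make; in your bottom-up approach the statement is not even well posed until that identification is specified.
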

\begin{proof}[Proof (sketch)]
Consider the Hirzebruch surface ${\mathbb {F}}_{b}$ with zero-section
of self-intersection $-b$ (and hence with infinity-section of
self-intersection $b$). Fix $s$ distinct fibres of the $\cp
^1$-fibration and blow up the intersection points of these fibres with
the infinity-section. After the appropriate sequence of blow-ups we
can identify in the resulting rational surface a configuration of
curves intersecting each other according to $\G$, and it is easy to
see that the complementary curves will intersect each other according
to $\G '$.  Since the curves intersecting according to the graph $\G$
admit an $\omega$-convex neighbourhood (with the symplectic
form $\omega$ being the K\"ahler form on the Hirzebruch surface), with
the Milnor fillable contact structure as induced structure on the
boundary, the complement (diffeomorphic to $W_{\G '}$) provides a
strong concave filling of $(Y_\G , \xi _\G )$.  Since the complement
is also a regular neighbourhood of a configuration $K$ of curves
(intersecting each other according to $\G '$), we will refer to $K$
(and sometimes, with a slight abuse of notation, to the regular
neighbourhood $W_{\G '}$) as the \emph{compactifying divisor}.
\end{proof}
Suppose now that $X$ is a weak symplectic $\bfq$HD
filling of $(Y_\G,\xi_\G)$. Since $Y_\G$ is a rational homology
$3$-sphere, we can perturb the symplectic structure on $X$ in a
neighbourhood of the boundary so that it becomes a strong symplectic
filling of $(Y_\G,\xi_\G)$. Glue $X$ and $W_{\G '}$ along $Y_\G$ to
obtain a closed symplectic $4$-manifold $Z$.
Let $k$ denote the number of irreducible components of the compactifying
divisor $K$. Then since $W_{\G'}$ is a regular neighbourhood of $K$, we have
that $b_2(W_{\G'})=k$. Since $X$ is a $\bfq$HD, it follows that $b_2(Z)=k$.

In all cases that we consider, it turns out that $K$ (after, possibly,
some blow-downs) contains a component which is a sphere that is
embedded in $W_{\G'}\subset Z$ with self intersection number
$1$. (This is the step when the assumption $\G \in \fra \cup \frb
\cup \frc$, and the constraint of Theorem~\ref{t:main2} on the framing
of the central vertex become crucial.) Let $L$ denote one such
component. By McDuff's Theorem~\ref{thm:McDuff}, we conclude that $Z$
is a rational symplectic $4$-manifold and hence diffeomorphic to $\cpk
\# (k-1)\cpkk$. In fact, McDuff's Theorem implies that for a generic
tame almost complex structure $J$, in the complement of $L$ we can
find $k-1$ disjoint embedded symplectic $2$-spheres with
self-intersection number $-1$ (we will refer to these as {\em
  symplectic $(-1)$-curves}), and after blowing these down we obtain
$\cpk$. However, we would like to understand how the other components
of $K$ descend under the blowing down map. We thus proceed as follows.

We choose a tame almost complex structure $J$ on $Z$ with respect to
which all the curves in $K$ are pseudoholomorphic. We assume that $J$
is generic among those almost complex structures for which $K$ is
$J$-holomorphic.  Appealing to Lemma \ref{lem:exccurves} we can find a
pseudoholomorphic $(-1)$-curve $E$ in $Z$ disjoint from $L$. By
perturbing the almost complex structure $J$ if necessary, we can
assume that $E$ intersects each component of $K$ transversely and does
not pass through any point where two or more components of $K$
intersect. We choose a maximal family $\{E_j\}$ of such
pseudoholomorphic $(-1)$-curves which are disjoint from $L$ and blow
them down. Let $Z^\prime$ denote the resulting symplectic
$4$-manifold.

By \cite[Lemma~4.1]{OO}, we can find a tame almost complex structure
$J^\prime$ on $Z^\prime$ with respect to which the images of all the
components of $K$ are pseudoholomorphic. We will again be in the
situation where we have a closed symplectic $4$-manifold containing a
symplectically embedded $2$-sphere of self-intersection number $1$ and
a collection of symplectically immersed $2$-spheres (the images of the
components of $K-L$). Let $K^\prime$ denote the image of $K$ under the
blowing down map. If $K^\prime$ contains a curve disjoint from $L$ (as
will always be the case in the situations we consider), then we can
again appeal to Lemma \ref{lem:exccurves} and find a pseudoholomorphic
$(-1)$-curve $E^\prime$ in $Z^\prime\setminus L$.

Note that $E^\prime$ must be a component of $K^\prime$. Indeed, assume
to the contrary that $E^\prime$ is not a component of
$K^\prime$. Perturbing the almost complex structure slightly, we may
assume that $E^\prime$ does not pass through the images of the
blown-down $(-1)$-curves $E_j$. Hence we may assume that $E^\prime$ is
actually a pseudoholomorphic $(-1)$-curve already in $Z\setminus L$,
which contradicts the maximality of $\{E_j\}$.

By suitably perturbing the almost complex structure, we can arrange
that $E^\prime$ intersects each component of $K^\prime- E^\prime$
transversely and it does not pass through any point where two or more
components of $K^\prime- E^\prime$ meet. We then blow down
$E^\prime$. Let $Z^{''}$ denote the resulting ambient symplectic
$4$-manifold and $K^{''}$ denote the image of $K^\prime$.

As before, we can again check that there are no pseudoholomorphic
$(-1)$-curves in $Z^{''}$ except possibly for some components of
$K^{''}$. Perturbing the almost complex structure as before, blowing
down these pseudoholomorphic $(-1)$-curves and proceeding in this way,
we must eventually obtain $\cpk$ together with a symplectically
embedded $2$-sphere of self-intersection number $1$ and a collection
of symplectically immersed $2$-spheres. Since we are assuming that $X$
is a $\bfq$HD, it follows that we must obtain $\cpk$ after $k-1$ blow
downs and the configuration $K$ must descend to a valid configuration
in $\cpk$.  This places strong restrictions on the combinatorial
structure of $K$: all components of $K$ which are disjoint from $L$
must be blown down at some point of this procedure (so in particular
they must become $(-1)$-curves at some earlier point), while a
component $K_0$ of $K$ intersecting $L$ must become a $J$-holomorphic
submanifold of $\cpk$ of degree $K_0\cdot L$.  This condition, for
example, determines the homological square of the image of $K_0$ in
$\cpk$, and for low degrees it also determines the topology of the
result. For most graphs $\G$ we will reach a homological contradiction
at some point of this procedure, showing the nonexistence of the
hypothesized $\bfq$HD filling $X$.

\section{Small Seifert singularities}
\label{s:threeleg}
By Theorem~\ref{t:ssw} and by the fact that all graphs in $\frw \cup
\frn \cup \farm$ are known to admit $\bfq$HD smoothings \cite{SSW},
we only need to examine the three-legged graphs in $\fra\cup \frb \cup
\frc$. The discussion will be given for each of these classes
separately; for technical reasons we start with the case of graphs in
$\frc$.

\subsection{Graphs in $\frc$}
\label{ss:famc}
Recall that graphs in $\frc$ are defined by repeatedly blowing up the
basic configuration shown by Figure~\ref{f:alap}(c) and then replacing
the $(-1)$-framing with $(-2)$. To get three-legged graphs, we only
blow up \emph{edges} emanating from the $(-1)$-vertex. There are three
cases we distinguish depending on which edge we blow up in the first
step in the basic example. The index of the subfamily records the
(negative of the) framing of the leaf to which the first blown up edge
points. Notice that the families $\frc ^3_2$ and $\frc _3^3$ defined
by the graphs of (i) and (j) of Figure~\ref{f:qhd3} are subfamilies of
$\frc _2$ and $\frc _3$, respectively.

\smallskip
\noindent {\bf{The family $\frc _6$:}} Consider the generic member of
the family $\frc _6$ depicted in Figure~\ref{f:c6}(a).
\begin{figure}[ht]
\begin{center}
\includegraphics[width=10cm]{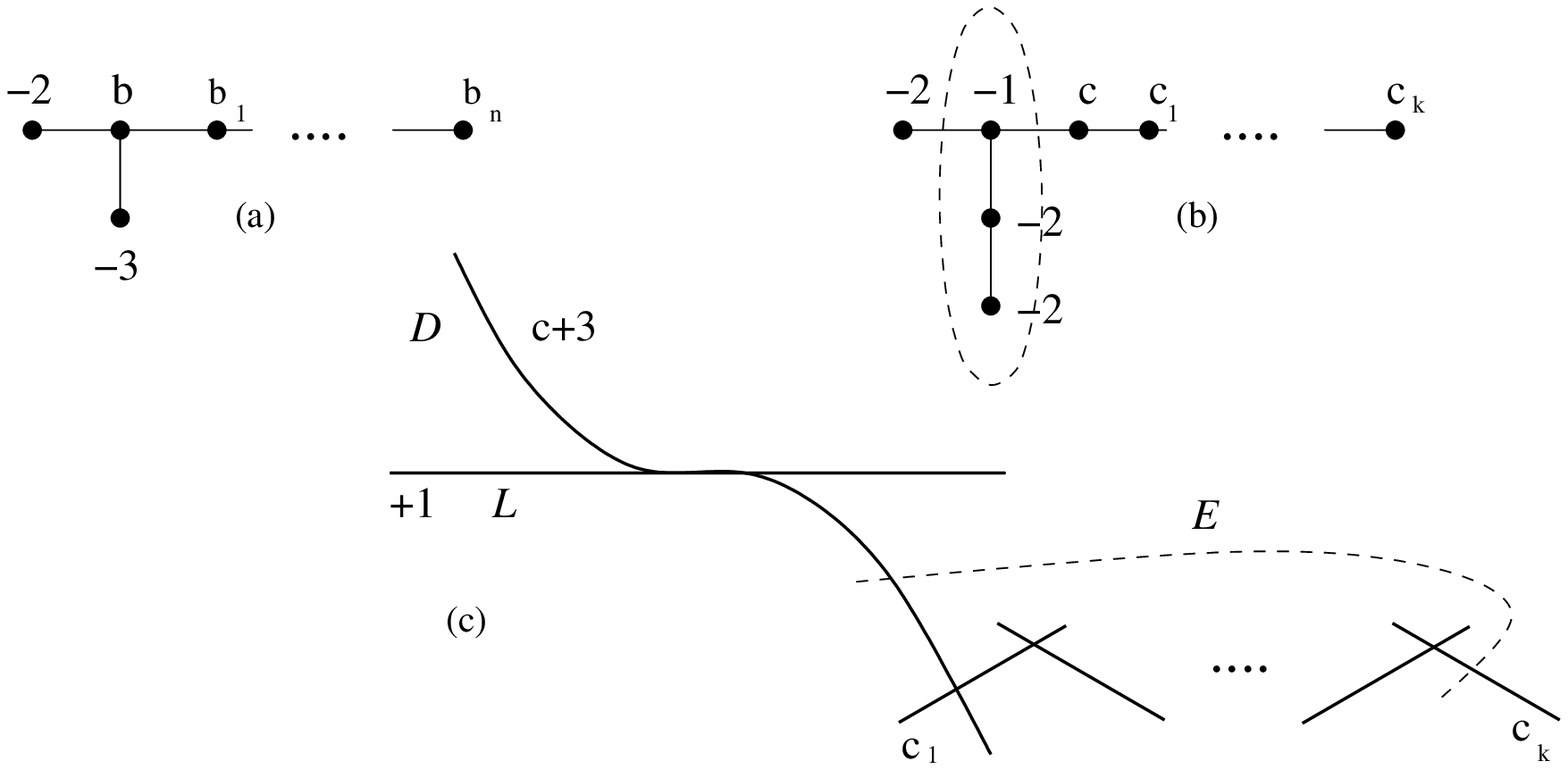}
\end{center}
\caption{The generic graph, its dual, and the configuration of curves
  after 3 blow-downs in the family $\frc _6$.}
\label{f:c6}
\end{figure}
The dual graph (after possibly repeatedly blowing up the edge
emanating from the central vertex towards the long leg until the
central framing becomes $-1$) has the shape given by
Figure~\ref{f:c6}(b). Blowing down the central vertex together with
the two $(-2)$'s (encircled by the dashed circle in
Figure~\ref{f:c6}(b)), we arrive at the diagram of
Figure~\ref{f:c6}(c); here the curves are symbolized by arcs, and the
intersection of two arcs means that the two corresponding curves
intersect each other. (The dashed arc of Figure~\ref{f:c6}(c) will be
relevant only at some later point of the argument.) The resulting
$(+1)$-curve will be denoted by $L$, while the curves of the long leg
(with framings $c,c_1, \ldots , c_k$) will become $D, C_1, \ldots ,
C_k$, respectively.  The tangency between $D$ and $L$ is a triple
tangency. (We use a straight line to indicate $L$ and a cubic curve to
picture $D$, which eventually will become a singular cubic in $\cpk$.)
Since $b_n\leq -6$, it is easy to see that $k\geq 3$.  Notice also
that $c_i\leq -2$ once $i\geq 1$ and $c$ is negative. By gluing this
compactifying divisor to a potentially existing $\bfq$HD filling $X$
we get a closed symplectic manifold $Z$ with $b_2(Z)=k+2$. The
symplectic 4-manifold $Z$ obviously contains a symplectic
$(+1)$-sphere (namely, the curve $L$), hence it follows by McDuff's
Theorem~\ref{thm:McDuff} that $Z$ is a rational symplectic
$4$-manifold, that is, a symplectic blow-up of $\cpk$ at a finite
number of points, hence $Z$ is diffeomorphic to $\cpk \# (k+1)
\overline{\cpk}$. By repeated applications of
Lemma~\ref{lem:exccurves}, we can blow down the pair $(Z,L)$ to obtain
$(\cpk , {\rm line})$, while preserving the pseudoholomorphicity of
the images of $D,C_1,\ldots,C_k$. Since the curves $C_1,\ldots,C_k$ in
the chain are disjoint from the $(+1)$-curve $L$ and are homologically
essential, we must blow them down, while the curve $D$ will descend to
a cubic curve in $\cpk$. Since the resulting cubic curve will be the
image of a rational curve, it necessarily must contain a singular
point. The above observations imply, therefore, that there is a unique
additional $(-1)$-curve $E$ in $Z$ for the chosen almost complex
structure, which we have to locate in the diagram.  Since
$J$-holomorphic curves intersect positively, the geometric
intersections in these cases can be computed via homological
arguments.
\begin{prop}\label{p:c6}
Under the above circumstances the exceptional divisor $E$ must
intersect the curve $D$ and the curve $C_k$ in the chain in one point
each. Consequently, the framings should satisfy $c_i=-2$ for $i=1,
\ldots , k$ and $c=-k+2$. In particular, the resolution graph of the
singularity (given by Figure~\ref{f:c6}(a)) must be of the form given
in Figure~\ref{f:qhd3}(f).
\end{prop}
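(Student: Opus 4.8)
The plan is to work entirely inside the closed symplectic manifold $Z\cong \cpk\#(k+1)\cpkk$ produced by the gluing, and to pin down the homology class and the intersection pattern of the extra exceptional curve $E$ by combining positivity of intersections with the adjunction formula. Fix the standard basis $H,E_1,\dots,E_{k+1}$ of $H_2(Z;\bfz)$. Since $L$ is a symplectic $(+1)$-sphere we may take $[L]=H$; since $D\cdot L=3$ while $C_1,\dots,C_k$ and $E$ are all disjoint from $L$, the classes of the latter lie in $\langle E_1,\dots,E_{k+1}\rangle$, whereas $[D]=3H-\sum_j d_jE_j$. First I would record the numerical constraints: the descended curve is a \emph{rational} cubic (the image of the sphere $D$), so it carries a single singular point of $\delta$-invariant one, and this translates into the multiplicity identity $\sum_j\binom{d_j}{2}=1$. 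Hence exactly one $d_j$ equals $2$ and all the others are $0$ or $1$, i.e.\ the cubic acquires precisely one node. I would similarly apply $c_1(Z)\cdot C=C\cdot C+2$ (with $c_1(Z)=3H-\sum_jE_j$) to each of the spheres $C_i$ and to $E$, together with $E^2=-1$ and $E\cdot L=0$, to record the available classes.

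Next I would locate $E$. By positivity $E\cdot D\ge 0$ and $E\cdot C_i\ge 0$, and because every $C_i$ has square $\le -2$, the only $(-1)$-curve in $Z\setminus L$ available to begin the contraction is $E$; hence $E$ must meet the chain, since otherwise the $(-2)$-chain could never be contracted on the way down to $\cpk$. I would then show that $E$ meets the chain in a single terminal vertex and meets $D$ exactly once, by excluding the alternatives. If $E$ met an interior $C_i$, or met two chain vertices, then after contracting $E$ the complement of $L$ would acquire either a branch point (forcing an extra singular point on the descended cubic, against $\sum_j\binom{d_j}{2}=1$) or a cycle of pseudoholomorphic spheres disjoint from $L$, the latter excluded by Corollary~\ref{c:nocycle} together with Lemma~\ref{lem:nonex}. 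If instead $E\cdot D\ge 2$, the image of $D$ would already acquire its double point at this first contraction, and then propagating the cascade along the chain would manufacture a second singular point, again contradicting the one-node conclusion. This leaves $E\cdot D=E\cdot C_k=1$ and $E\cdot C_i=0$ for $i<k$, which is the first assertion of Proposition~\ref{p:c6}.

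Finally I would run the contraction. Contracting $E$ turns $C_k$ into a $(-1)$-curve and attaches it to $D$; contracting $C_k$ does the same for $C_{k-1}$, and so on, so that the entire chain disappears as a cascade of $(-1)$-curves, the last contraction creating the unique node of the cubic while $D\cdot L=3$ is preserved throughout. For the cascade to proceed, each $C_i$ must become a $(-1)$-curve exactly at its turn, which forces $c_i=-2$ for every $i$. Bookkeeping the self-intersection of $D$ across the cascade and equating the final value with that of a plane cubic then forces $c=-k+2$; since $b_n\le -6$ gives $k\ge 3$, the resulting data translates back, under the correspondence between $\G$ and its dual $\G'$, into precisely the resolution graph of Figure~\ref{f:qhd3}(f).

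The main obstacle will be the middle step: rigorously excluding every intersection pattern for $E$ other than the claimed one. This is exactly where the standing hypotheses (that $\G\in\frc$, and that the configuration descends to $(\cpk,\mathrm{line})$ with $D$ a singular cubic) enter, through positivity of intersections, the absence of cycles of spheres in the complement of $L$ (Corollary~\ref{c:nocycle}) and of singular or higher-genus curves meeting $L$ too few times (Lemma~\ref{lem:nonex}), and the rigidity of a rational cubic. Once $E$ has been located, the remaining self-intersection arithmetic determining $c_i=-2$ and $c=-k+2$ is routine.
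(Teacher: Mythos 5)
Your plan follows the paper's proof in outline --- locate the unique extra $(-1)$-curve $E$, rule out all intersection patterns except $E\cdot D=E\cdot C_k=1$, then run the forced cascade of blow-downs to get $c_i=-2$ and $c=-k+2$ --- and your adjunction identity $\sum_j\binom{d_j}{2}=1$ is a clean homological substitute for the paper's geometric constraints on the singularities of a rational cubic (Lemma~\ref{l:sing} and the trefoil/Hopf-link dichotomy), granted positivity of the $d_j$, which holds because each exceptional class of the blow-down is represented by a pseudoholomorphic sphere.

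There is, however, a genuine gap in the middle step, exactly where you predicted the difficulty would lie: your list of excluded alternatives (interior vertex, two chain vertices, $E\cdot D\ge 2$) does not exhaust the complement of the desired configuration. The chain has \emph{two} terminal vertices, $C_1$ (the one meeting $D$) and $C_k$, and they are not symmetric; the case $E\cdot C_1=E\cdot D=1$ passes all of your stated exclusions, yet you conclude that only $E\cdot C_k=E\cdot D=1$ survives. The paper devotes a separate argument to precisely this case (``If $E$ intersects the chain on its end near $D$\dots''): blowing down $E$ makes the image of $C_1$ meet $D$ twice, the next blow-down puts a node on $D$, and the remaining blow-downs of the chain pile up further singular points, impossible for a plane cubic. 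Inside your own framework the case can be dispatched in two ways: either note that after contracting $E$ the images of $C_1$ and $D$ meet in two distinct points, hence already form a cycle of pseudoholomorphic spheres in the complement of $L$, contradicting Corollary~\ref{c:nocycle}; or compute homologically that, since $c_i\le -2$ for all $i$, the cascade is forced to climb the chain from the $D$-end, so the exceptional classes are $[E]$, $[C_1]+[E]$, $[C_2]+[C_1]+[E],\dots$, giving $d_j=2$ for $k$ of the indices and $\sum_j\binom{d_j}{2}=k\ge 3$, contradicting your identity. Either way the case must be treated: as written, your proof silently chooses the correct end of the chain, and that choice is exactly what determines the framings and hence the graph of Figure~\ref{f:qhd3}(f). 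A smaller omission of the same kind: you never rule out $E\cdot D=0$ before asserting that $E$ ``meets $D$ exactly once'' (the paper does this first, using that the image of $D$ must be a \emph{singular} cubic); that case does follow immediately from your identity, since then all $d_j\le 1$, but it needs to be said.
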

\begin{proof}
Let $\J_K$ denote the nonempty set of tame almost complex structures
on $Z$ with respect to which all the curves of $K=L\cup D\cup C_1 \cup
\ldots \cup C_k$ are pseudoholomorphic. Choose an almost complex
structure $J$ which is generic in $\J_K$. If we blow down all
$J$-holomorphic $(-1)$-curves away from $L$, we can show that the
chain $C_1,\ldots,C_k$ is transformed into a configuration of curves
which can be sequentially blown down. There must be precisely one
$(-1)$-curve $E$ in the complement of $L$ which is not contained in
the chain $C_1,\ldots,C_k$; this $(-1)$-curve $E$ must intersect the
chain to start its sequential blow-down. $E$ also must intersect the
curve $D$ at least once, since (as $D$ has intersection number $3$
with the $(+1)$-curve $L$) $D$ will become a singular cubic curve in
$\cpk$. By Corollary~\ref{c:nocycle} the curve $E$ cannot intersect
the long chain twice. With a similar argument we can see that it can
intersect the chain only in its endpoints: if it intersects the chain
in a curve $C_i$ which is not at one of its ends, then blowing down
$E$ we get a curve $C_i^\prime$ which now intersects $D$ and two
further curves in the chain.  When we blow down $C_i^\prime$, the two
neighbours will pass through the same point of $D$. If, now, the image
of $C_{i-1}$ is the next curve of the chain to get blown down, then
the images of all curves in the portion $C_1,\ldots,C_{i-1}$ of the
chain must get blown down before the image of the curve $C_{i+1}$ is
blown down. Otherwise, we will get a singular point on the image of
$D$ and at least one further curve of the chain passing through
through that singular point. After a slight perturbation of the almost
complex structure, when (the image) of one of these curves is
eventually blown down we will get a further singular point on the
image of $D$, which (with the aid of Lemma~\ref{l:sing}) provides a
contradiction. However, after the images of $C_1,\ldots,C_{k-1}$ are
blown down, the image of $D$ will become singular, and the same
argument again provides a contradiction. If the image of $C_{i+1}$ is
the next curve of the chain to get blown down after $C_i '$, then, as
before, we can argue that the images of all curves in the portion
$C_{i+1},\ldots,C_{k}$ of the chain must get blown down before the
image of the curve $C_{i-1}$ is blown down. If $i>3$, then, when the
image of $C_{i-1}$ is blown down, we will get a contradiction as
before. If $i=3$, then, when image of $C_{i-1}$ is blown down, we will
obtain a singular point on the image of $D$ which has multiplicity
greater than $2$ and hence its link will not be the trefoil knot or
the Hopf link, a contradiction.

If $E$ intersects the chain on its end near $D$, then after the second
blow-down $D$ develops a transverse double point singularity, and the
further blow-downs then create more singular points (in the spirit of
the argument above), leading to a curve which cannot represent three
times the generator in the complex projective plane. Hence the only
possibility for the $(-1)$-curve $E$ is to intersect the chain at its
farther end, and intersect $D$ once (as shown by the dashed curve $E$
of Figure~\ref{f:c6}(c)). In order to blow down all the curves in the
chain we must have $c_i=-2$ for $i=1,\ldots , k$, and since the
self-intersection of $D$ will become 9 after all the blow-downs, we
derive $c=-k+2$. With this last observation, and a simple computation
of the dual graph, the proof is complete.
\end{proof}

\smallskip
\noindent {\bf{The family $\frc _3$:}} The generic member of this family is
given by Figure~\ref{f:c3}(a), together with the dual graph and the
result of the triple blow-down.  (Once again, we disregard the dashed
arcs of Figure~\ref{f:c3}(c) momentarily.)
\begin{figure}[ht]
\begin{center}
\includegraphics[width=10cm]{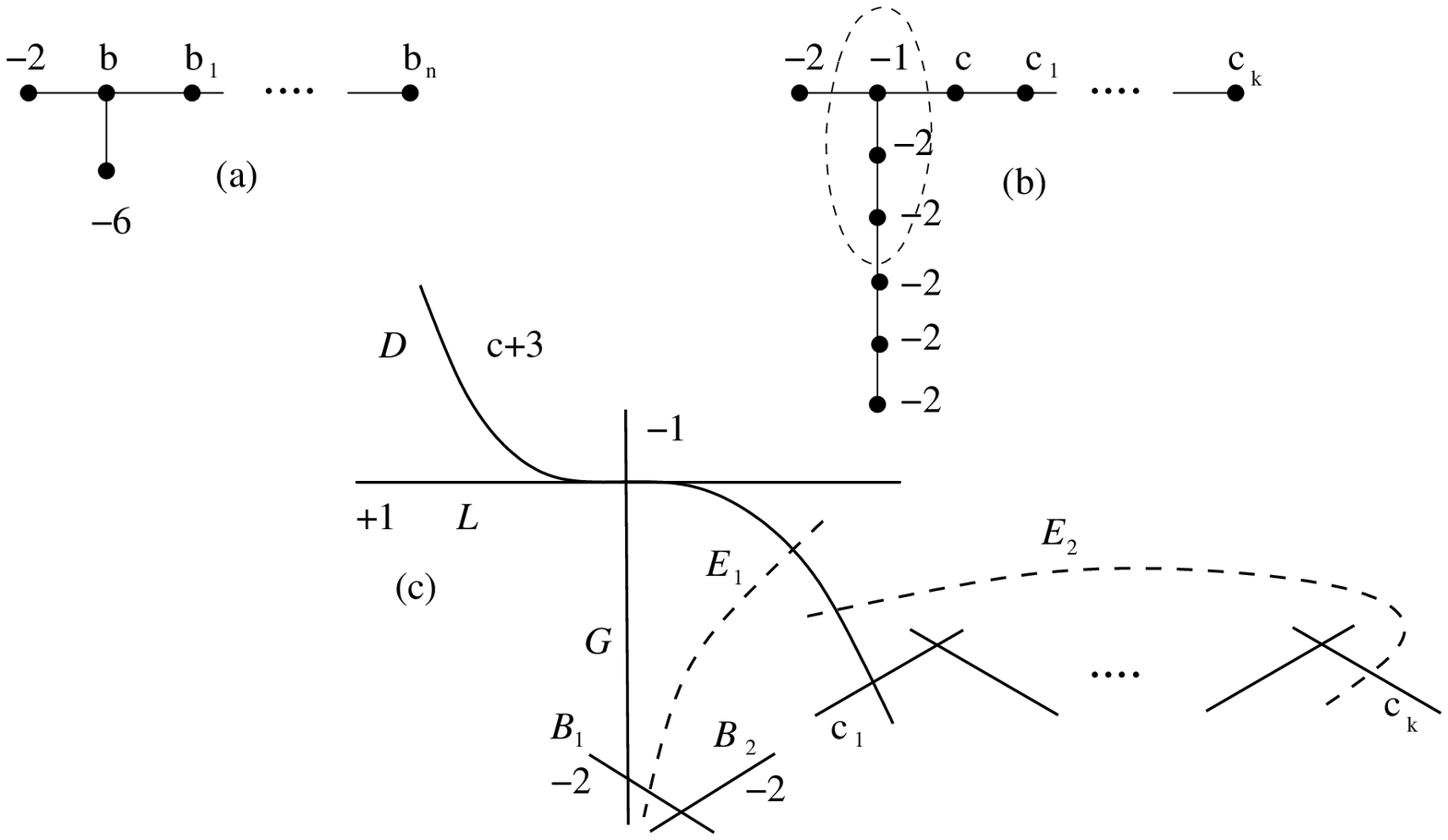}
\end{center}
\caption{The generic graph, its dual, and the configuration of curves
  after 3 blow-downs in the family $\frc _3$. The curves $E_1, E_2$
  are only shown for the first possibility given by
  Proposition~\ref{p:c3pr}.}
\label{f:c3}
\end{figure}
By gluing the compactifying divisor given by Figure~\ref{f:c3}(c) to a
potentially existing $\bfq$HD filling $X$ we get a closed symplectic
manifold $Z$, and a simple count shows that $b_2(Z)=k+5$. The
symplectic 4-manifold $Z$ obviously contains a symplectic
$(+1)$-sphere (namely, the curve $L$), hence, by McDuff's
Theorem~\ref{thm:McDuff}, $Z$ is diffeomorphic to $\cpk \# (k+4)
\overline{\cpk}$. By repeated applications of
Lemma~\ref{lem:exccurves}, we can blow down the pair $(Z,L)$ to obtain
$(\cpk, {\rm line})$, while preserving the pseudoholomorphicity of the
images of $D,C_1,\ldots,C_k,B_1,B_2$. Since the curves $C_1, \ldots ,
C_k$ and $B_1,B_2$ are disjoint from the $(+1)$-curve $L$ and are
homologically essential, we must blow them down. This means that there
are two further $(-1)$-curves $E_1$ and $E_2$ which we have to locate
in the diagram.  For a generic almost complex structure these curves
will be $(-1)$-curves disjoint from each other. Since both $B_1$ and
$B_2$ have to be blown down (being disjoint from the $(+1)$-curve
$L$), one of them must intersect one of the $(-1)$-curves, say $E_1$.
Since the complement of the $(+1)$-curve does not contain
homologically essential spheres with nonnegative square, $E_2$ then
cannot intersect any of the $B_i$.
\begin{prop}\label{p:c3pr}
Under the above circumstances, the existence of a $\bfq$HD smoothing
$X$ implies that $E_2$ intersects $D$ and $C_k$, and $E_1$ either
intersects $B_1$ and $D$ or $B_2$ and $C_1$. The self-intersections in
these two cases are $c=-k-1$ and $c_1=\ldots =c_k=-2$ or $c=-k+2$,
$c_1=-5$ and $c_2=\ldots =c_k=-2$. In particular, the resolution graph
in the first case is given by Figure~\ref{f:qhd3}(j), while in the
second case by Figure~\ref{f:qhd3}(d) (with $q=k-4$ and $r=2$).
\end{prop}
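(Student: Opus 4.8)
The plan is to run the template established in the proof of Proposition~\ref{p:c6}, handling the two exceptional curves $E_2$ and $E_1$ in turn and reading off the framings from a self-intersection bookkeeping at the end. Throughout I would fix an almost complex structure $J$ generic among those making every curve of $K=L\cup D\cup C_1\cup\cdots\cup C_k\cup B_1\cup B_2$ pseudoholomorphic, and blow down $(-1)$-curves in the complement of $L$ until reaching $(\cpk,\mathrm{line})$, keeping the images of $D,C_1,\ldots,C_k,B_1,B_2$ pseudoholomorphic. As already observed before the statement, $E_2$ is disjoint from both $B_1$ and $B_2$ (Lemma~\ref{lem:nonexnonneg}), so the chain $C_1,\ldots,C_k$ can only begin its sequential blow-down by meeting $E_2$.

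First I would locate $E_2$. Since the chain must collapse and $E_2$ is the only available $(-1)$-curve touching it, the argument of Proposition~\ref{p:c6} applies essentially verbatim: $E_2$ must also meet $D$ (because $D$ descends to a \emph{singular} cubic), it cannot meet the chain twice by Corollary~\ref{c:nocycle}, and a repeated use of Lemma~\ref{l:sing} together with the constraint that the unique singular point of the image cubic has a trefoil or Hopf link forces $E_2$ to meet the chain precisely at its far endpoint $C_k$ and to meet $D$ in a single point. This already constrains the chain to be a string of $(-2)$-curves except possibly at its near end $C_1$.

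Next I would analyse $E_1$, which by construction meets one of $B_1,B_2$. Here the two cases of the statement should emerge according to whether $E_1$ also meets $D$ (meeting $B_1$) or instead reaches the chain at its near endpoint $C_1$ (meeting $B_2$). In each case the admissibility is governed by the same two requirements used for $E_2$: the image of $D$ may acquire only one singular point, of multiplicity $2$ and with trefoil or Hopf link, and $B_1,B_2$ must be sequentially blown down without producing a cycle of pseudoholomorphic spheres in the complement of $L$ (Corollary~\ref{c:nocycle}). Ruling out every other intersection pattern for $E_1$—in particular $E_1$ meeting the interior of either leg, or meeting $D$ with multiplicity greater than one—should proceed exactly as in Proposition~\ref{p:c6}, by pushing the blow-downs through and exhibiting a forbidden higher-multiplicity point on the image of $D$.

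Finally, once the two configurations are isolated, the framings follow from tracking how the self-intersection of $D$ grows as the curves collapse: the image of $D$ must become a cubic of self-intersection $9$ while $L$ remains a $(+1)$-line and the chain and $B$-leg reduce to nothing under successive blow-downs. This should yield $c=-k-1$, $c_1=\cdots=c_k=-2$ in the first case and $c=-k+2$, $c_1=-5$, $c_2=\cdots=c_k=-2$ in the second; a routine computation of the associated dual graph then identifies these with Figure~\ref{f:qhd3}(j) and with Figure~\ref{f:qhd3}(d) (with $q=k-4$, $r=2$). I expect the hard part to be the \emph{exhaustiveness} of the case analysis for $E_1$: one must check that no placement other than the two listed is compatible with the order in which the two legs can be blown down while $D$ retains a single mild singular point. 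This is the delicate combinatorial heart of the argument, since the $C$-chain and the $B$-leg now interact and must be collapsed in a mutually consistent way.
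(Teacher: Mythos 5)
There is a genuine gap: your compactifying divisor is missing a component. For the family $\frc _3$ the configuration $K$ of Figure~\ref{f:c3}(c) is not $L\cup D\cup C_1\cup\cdots\cup C_k\cup B_1\cup B_2$ but also contains the ``vertical'' curve $G$ (coming from the third leg of the dual graph), with $G\cdot L=1$, $G^2=-1$, and the leg $G-B_1-B_2$ attached to it; this is what makes $b_2(Z)=k+5$ and forces exactly \emph{two} extra $(-1)$-curves $E_1,E_2$. With your $K$ the count gives $b_2(Z)=k+4$, so your own setup would call for only one extra $(-1)$-curve, and the final bookkeeping could not come out as stated.

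The omission is not cosmetic: the constraints imposed by $G$ --- that its image must descend to a \emph{line} in $\cpk$, so its square must rise to exactly $+1$ and its intersection with the image of $D$ must end up exactly $3$ --- are precisely what drive the case analysis, and your substitute constraints (one mild singular point on the image of $D$, no cycles, sequential collapse of the legs) are too weak. Concretely, the placement ``$E_1$ meets $B_1$ only, $E_2$ meets $C_k$ and $D$'' satisfies everything you impose: the $B$-leg collapses after $E_1$, the chain collapses after $E_2$, $D$ acquires a single node, and no cycle appears. It is excluded in the paper only because the image of $G$ would then meet the image of $D$ once rather than three times; this same requirement is the only thing forcing $E_1\cdot D=1$ in the first case of the statement. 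Likewise, in the second case ($E_1\cdot B_2>0$) the fact that $E_1$ must meet the chain at all --- which is what produces $c_1=-5$ --- comes from the requirement that the image of $G$, which after the three blow-downs has square $0$, be raised to square exactly $+1$ by one later blow-down on it, and the alternative ``$E_2\cdot G=1$'' must be ruled out separately (the paper does this by noting that then neither $E_i$ could meet the nonempty chain). Relatedly, your opening claim that ``the chain can only begin its sequential blow-down by meeting $E_2$'' is a non sequitur: a $(-1)$-curve may meet several components of $K$, and indeed in the paper's second case $E_1$ meets both $B_2$ and $C_1$; that $E_2$ is the curve starting the chain (and meeting $C_k$ and $D$ as in Proposition~\ref{p:c6}) is a \emph{conclusion} of the $G$-analysis, not a consequence of $E_2\cdot B_i=0$. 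So the case analysis you correctly identify as the heart of the matter cannot be carried out within your setup; one must reinstate $G$ in $K$ and run the dichotomy on $E_1\cdot B_1>0$ versus $E_1\cdot B_2>0$ while tracking the square of $G$ and the intersection number of $G$ with $D$.
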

\begin{proof}
{\bf Case I:} Suppose that $E_1\cdot B_1>0$. After three blow-downs
the curve $G$ becomes a $(+1)$-curve, so it cannot be blown down any
further: in $\cpk$ it will be a curve intersecting the $(+1)$-curve
once, hence it will be a line with self-intersection number $1$. Therefore,
to prevent further blow-downs along the points of the vertical curve,
$E_2\cdot G=0$ and $E_1$ must be disjoint from the long chain. So
$E_2$ must intersect the long chain, and since the whole chain must be
blown down, a simple adaptation of the proof of Proposition~\ref{p:c6}
gives that the only possibility for $E_2$ is the one described in the
statement. Notice that the images of $G$ and $D$ must intersect each
other three times after all curves have been blown down, which can be
achieved only if $E_1$ intersects $D$ exactly once. (Recall that $E_2$
must stay disjoint from $G$.)  This argument shows that the only
possibility for $E_1$ and $E_2$ (under the assumption $E_1\cdot B_1 >
0$) is given by the dashed lines of Figure~\ref{f:c3}(c), providing
the first set of values of $c$ and $c_i$.

{\bf Case II:} Suppose now that $E_1\cdot B_2>0$. Then after three
blow-downs the vertical curve $G$ becomes a $0$-curve, so either (a)
$E_2$ intersects $G$ or (b) $E_1$ intersects a further $(-1)$-curve
in the chain (after it has been partially blown down). If $E_1$
intersects $B_2$ and $E_2$ intersects $G$ then none of the $E_i$
intersect the chain, and since the chain is nonempty, this provides a
contradiction.

Therefore $E_1$ should intersect the long chain, and it should
intersect it in the last curve to be blown down from there. Suppose
that $E_1\cdot C_i=1$. Then $E_1$ cannot intersect $D$, since
otherwise after blowing down $E_1$, then sequentially blowing down the
images of $B_2$ and $B_1$, $C_i '$ (the image of $C_i$) will intersect
the image of $D$ at least three times (counting with
multiplicity). When (the image of) $C_i '$ is eventually blown down,
the image of $D$ will gain a singularity which is not permitted for a
cubic in $\cpk$. This shows that $E_2$ has to intersect the chain
(and start the sequence of blow-downs) and it also has to intersect
$D$ to get a singularity on it.  Furthermore, we also know that $E_2$
must be disjoint from $G$.  The argument of Proposition~\ref{p:c6}
shows that $E_2$ must intersect the long chain at its farther end and
also $D$.  As usual, the framings are dictated by the fact that all
curves in the complement of the $(+1)$-curve must be blown down,
leading to the second set of values of $c$ and $c_i$. By determining
the dual graphs, the proof is complete.
\end{proof}

\smallskip
\noindent {\bf {The family $\frc _2$:}} The generic case in this
family is shown by Figure~\ref{f:c2}(a).
\begin{figure}[ht]
\begin{center}
\includegraphics[width=10cm]{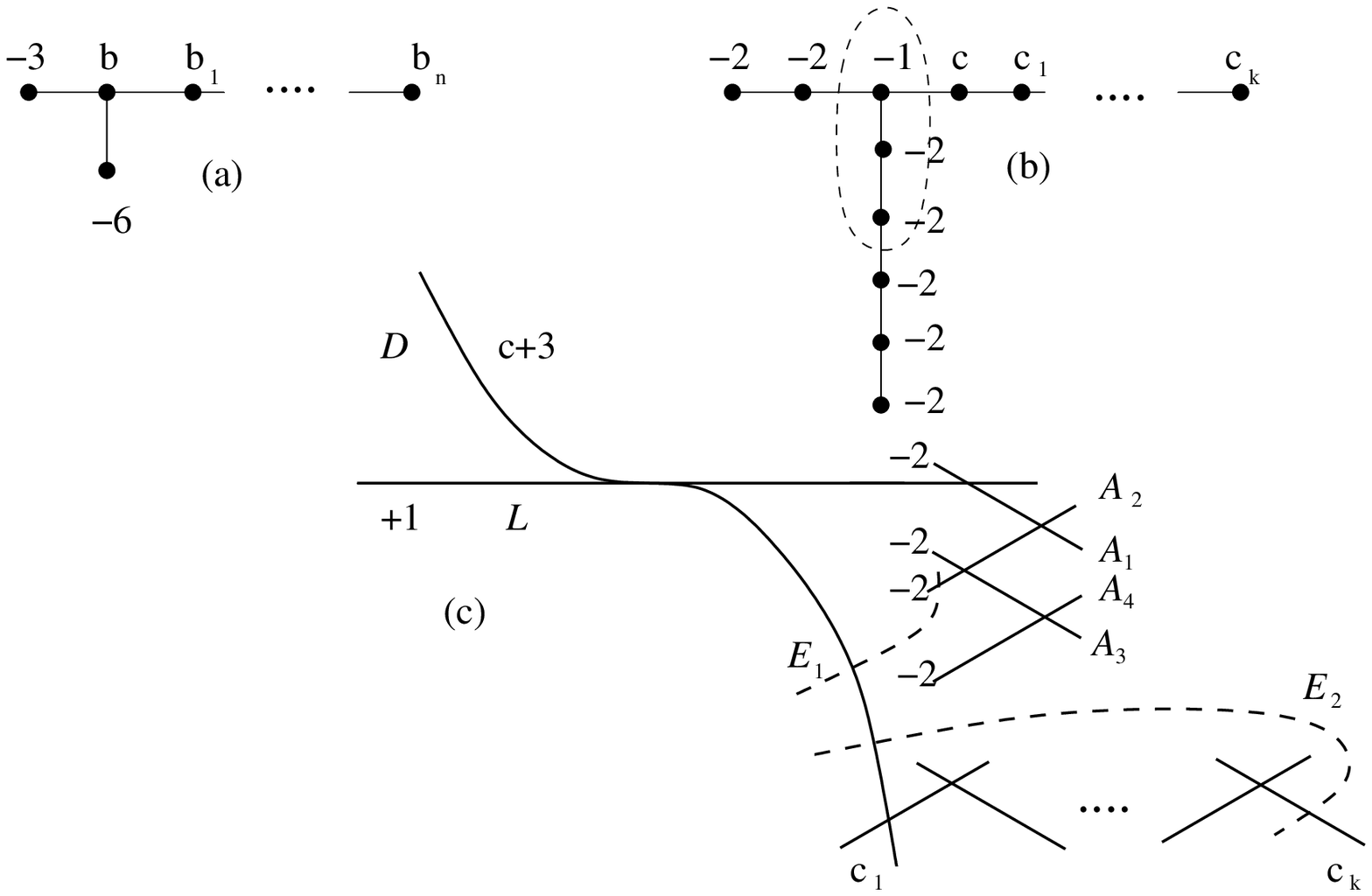}
\end{center}
\caption{The generic graph, its dual, and the configuration of curves after
3 blow-downs in the family $\frc _2$. The curves $E_1, E_2$ are shown only
for the first possibility given by Proposition~\ref{p:c2pr}.}
\label{f:c2}
\end{figure}
The usual simple calculation shows that by assuming the existence of a
$\bfq$HD filling for $(Y_{\G}, \xi _{\G} )$ we have to locate two
$(-1)$-curves in the diagram, which we will denote by $E_1$ and $E_2$.
Since the curves $A_2,A_3$ and $A_4$ must be blown down at some point
in the blow-down procedure, one of the $(-1)$-curves (say $E_1$)
should intersect $A_2\cup A_3\cup A_4$.
\begin{prop}\label{p:c2pr}
In the situation under examination, the existence of a $\bfq$HD
filling implies that $E_2$ intersects $D$ and $C_k$, while $E_1$
either intersects $A_2$ and $D$ or $A_4$ and $C_1$ or $A_4$ and $C_2$.
The framings in the three cases are given by $c=-k-2$ and $c_1=\ldots
c_k=-2$, or $c=-k+3$, $c_1=-5$, $c_3=-3$ and $c_2=c_4=\ldots =c_k=-2$,
or $c=-k+2$, $c_2=-6$ and $c_1=c_3=\ldots c_k=-2$.  In particular, the
resolution graph is as one of the graphs given by
Figure~\ref{f:qhd3}(i) in the first case, by
Figure~\ref{f:qhd3}(g) (with $p=0, r=2, q=k-4$) in the second,
and by Figure~\ref{f:qhd3}(e) ($p=3, q=k-4$) in the third.
\end{prop}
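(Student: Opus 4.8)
The plan is to follow the template already established in Propositions~\ref{p:c6} and \ref{p:c3pr}: first pin down the curve $E_2$ that initiates the blow-down of the long chain, and then carry out a case analysis for $E_1$ according to how it enters the short chain $A_2\cup A_3\cup A_4$. As before, fix a generic tame almost complex structure $J$ in the set $\J_K$ for which every curve of the compactifying divisor $K=L\cup D\cup C_1\cup\ldots\cup C_k\cup A_2\cup A_3\cup A_4\cup(\text{vertical curve})$ is pseudoholomorphic. By McDuff's Theorem~\ref{thm:McDuff} the glued manifold $Z$ is rational, so blowing down $L$ yields $(\cpk,{\rm line})$ and forces the rational curve $D$, which meets $L$ three times, to descend to a singular cubic of degree $3$. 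The setup already designates $E_1$ as the $(-1)$-curve meeting $A_2\cup A_3\cup A_4$, so $E_2$ must be the curve responsible for the long chain.

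First I would locate $E_2$. Since the chain $C_1,\ldots,C_k$ is disjoint from $L$ and consists of homologically essential spheres, it must be blown down in its entirety, and $E_2$ must both begin this process and produce the singularity on the image of $D$. Repeating the argument of Proposition~\ref{p:c6} essentially verbatim --- using Corollary~\ref{c:nocycle} to forbid a second intersection of $E_2$ with the chain, Lemma~\ref{l:sing} together with the trefoil/Hopf dichotomy for a cubic in $\cpk$ to forbid an interior intersection, and the multiplicity bound to forbid more than one intersection with $D$ --- shows that the only admissible behaviour is $E_2\cdot C_k=1$ and $E_2\cdot D=1$. Since by Lemma~\ref{lem:nonexnonneg} the complement of $L$ carries no symplectically embedded sphere of nonnegative square, the two $(-1)$-curves are disjoint for generic $J$ and $E_2$ cannot also touch the short chain, so the task reduces to locating $E_1$.

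Next I would analyse $E_1$ by tracking the self-intersection of the vertical curve (the analogue of $G$ in Proposition~\ref{p:c3pr}) after the triple blow-down, exactly as in the two cases treated there. The three possibilities in the statement correspond to the three positions at which $E_1$ can attach: meeting the short chain at the end $A_2$, where the vertical curve becomes a line and a homological intersection count (its image and that of $D$ must meet in $1\cdot 3=3$ points) forces $E_1\cdot D=1$; or meeting the chain at the end $A_4$ and then reaching into the long chain at either $C_1$ or $C_2$. In each case positivity of intersections, Corollary~\ref{c:nocycle}, and the prohibition against the cubic $D$ acquiring two singular points or a point of multiplicity exceeding two (Lemma~\ref{l:sing} and the trefoil/Hopf description) eliminate every configuration but the claimed one. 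The self-intersections met by $E_1$ and $E_2$ --- and hence the framings $c,c_1,\ldots,c_k$ --- are then forced by demanding that every curve disjoint from $L$ be blown down and that $D$ reach self-intersection $9$; a direct computation of the dual graphs matches the three outcomes with Figure~\ref{f:qhd3}(i), (g) and (e).

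The main obstacle will be the new sub-case in which $E_1$ reaches the long chain at $C_2$ rather than $C_1$, which has no counterpart in the family $\frc_3$. Here one must follow the order of the blow-downs carefully: after blowing down $E_1$ and the short chain, the image of $C_2$ acquires extra intersections with $D$, and one has to verify that no forbidden singularity develops on $D$ at any intermediate stage before the full chain is contracted. This bookkeeping, adapting the interior-intersection argument of Proposition~\ref{p:c6} to the slightly deeper entry point, is strictly more delicate than in $\frc_3$; the remaining cases are routine adaptations of the arguments already given for $\frc_6$ and $\frc_3$.
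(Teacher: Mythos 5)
Your overall template --- glue the compactifying divisor to the hypothetical filling, invoke McDuff's Theorem~\ref{thm:McDuff}, locate the two extra $(-1)$-curves, and let the blow-down bookkeeping fix the framings --- is the same as the paper's, and your handling of the case $E_1\cdot A_2>0$ and of the final framing computations is consistent with it. The proposal breaks down, however, at its first and most important step: you decree that $E_2$ ``must both begin'' the collapse of the long chain ``and produce the singularity on the image of $D$'', hence that $E_2\cdot C_k=E_2\cdot D=1$, before any case analysis. That assertion is the bulk of what has to be proved. The dangerous alternative, which your argument never touches, is that $E_2$ instead meets the line-to-be curve $A_1$ (possibly together with a curve of the long chain), while the long chain is started and finished with the help of $E_1$ and the collapsed string $A_4,A_3,A_2$. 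These are exactly the configurations the paper spends most of its proof eliminating (its subcases (i) and (ii) of Case II), and they are not excluded by positivity of intersections, Corollary~\ref{c:nocycle}, or Lemma~\ref{l:sing}: they are killed by quantitative contradictions --- the chain is forced to have length one, and then either the self-intersection count forces $c=1>0$, or the image of $D$ stays smooth, or the images of $D$ and $A_1$ meet twice instead of three times. Your stated justification --- that for generic $J$ the curves $E_1,E_2$ are disjoint, so ``$E_2$ cannot also touch the short chain'' --- is a non sequitur: disjointness of $E_1$ and $E_2$ from each other says nothing about which components of $K$ each may meet, and the curve one must worry about is $A_1$ (what you call the vertical curve), which does not lie in $A_2\cup A_3\cup A_4$ in any case. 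For the same reason the argument of Proposition~\ref{p:c6} is not available ``essentially verbatim'' here: that argument presupposes a \emph{unique} extra $(-1)$-curve, whereas now a second one can feed self-intersection into the chain through the $A_i$'s.

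Two smaller omissions point the same way. You never rule out $E_1\cdot A_3>0$, which the paper does first (blowing down $E_1$, $A_3$ and then $A_2$ produces a sphere of square $0$ in the complement of $L$, contradicting Lemma~\ref{lem:nonexnonneg}); and in the case $E_1\cdot A_4>0$ you never argue why $E_1$ must meet the long chain at all, nor why it may meet it only in the penultimate curve to be blown down --- again precisely the content of the two eliminated subcases. Finally, the point you single out as the ``main obstacle'' (whether $E_1$ enters the chain at $C_1$ or at $C_2$) is in fact the easy part: both options drop out of one and the same subcase with identical bookkeeping, differing only in whether the last chain curve to be blown down is the one next to $D$ or one step further out. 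As written, the plan assumes the conclusion exactly where the proof is hard and concentrates its care where the proof is routine.
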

\begin{proof}
Notice first that $E_1$ cannot intersect $A_3$ (otherwise we will have
a self-intersection 0 curve in the complement of $L$, contradicting
Lemma~\ref{lem:nonexnonneg}); hence we have two cases to examine.

{\bf Case I:} Suppose that $E_1\cdot A_2>0$. In this case, after four
blow-downs, the self-intersection of $A_1$ becomes 1, which cannot go
any higher, since in $\cpk$ the curve $A_1$ will become a
line. Therefore $E_1$ must be disjoint from the chain and $E_2$ must
be disjoint from all the $A_i$'s. In order for the image of $A_1$ to
intersect $D$ three times, $E_1$ must intersect $D$. Since $E_2$ is
disjoint from all the $A_i$'s, and it starts the blow-down of the
chain, and is responsible for the singularity on $D$, the usual
argument presented in the proof of Proposition~\ref{p:c6} locates
it. In conclusion, the only possibility for the framings is the one
given by the statement.

{\bf Case II:} Suppose now that $E_1$ intersects $A_4$.  After blowing
down $E_1$, and then sequentially blowing down the images of $A_4,A_3$
and $A_2$, the self-intersection of $A_1$ will increase to $-1$. In
order to increase it to $1$ we have a number of possibilities.

(i) $E_1\cdot C_i=0$ for all $i$, i.e., $E_1$ is disjoint from the
chain.  In this case $E_2$ must intersect $A_1$ and also the last
curve we blow down in the chain.  Since then there is no further curve
starting the blow-down of the chain, this can happen only if the chain
has a single element. If $E_2$ is disjoint from $D$, then after all
blow-downs have been carried out $D$ remains smooth, which is a
contradiction.  Therefore $E_2$ must intersect $D$.  Blowing down
$E_2$ and then the elements in the chain we get that the image of
$A_1$ passes through $D$ three times. Therefore $E_1$ must be disjoint
from $D$. Computing the self-intersections, however, we see that the
curve with framing $c$ (giving rise to $D$, which will become of
self-intersection 9) must have self-intersection $c=1$ in the dual
graph, which is a contradiction.

(ii) Assume now that  $E_1$ intersects the chain in the curve we will
blow down last.  This implies that $E_2$ should intersect $A_1$, but
since the blow-down of $E_1$ (together with the last curve in the chain)
increases the self-intersection of $A_1$ by two, $E_2$ must be
disjoint from the chain.  Therefore, once again, the chain must be of
length one. Performing the blow-downs, we conclude that $D$ remains
smooth and the images of $D$ and $A_1$ will intersect each other only
twice, hence this case does not occur.

(iii) Finally, it can happen that $E_1$ intersects the chain in the
penultimate curve to get blown down.  Then $E_2$ should be disjoint
from the $A_i$'s, and since the singularity on $D$ cannot be caused by
blowing down $E_1$, we need that $E_2$ intersects $D$. The usual
argument given in the proof of Proposition~\ref{p:c6} shows the
position of $E_2$, leading to two configurations, depending on whether
the last curve to be blown down is next to $D$ or is one off. The
resulting framings in these two cases are then the ones given by the
proposition.
\end{proof}

\subsection{Graphs in $\fra$}
For three-legged graphs in $\fra$ there is no need for further
subdivisions since the legs in this case are symmetric.  As usual, the
generic member of the family is shown by Figure~\ref{f:a}(a).  The
usual simple count shows that if we assume the existence of a $\bfq$HD
filling, then we have to find two $(-1)$-curves $E_1, E_2$ in
Figure~\ref{f:a}(a). The curve $A$ is of self-intersection $(-2)$,
and will become a line in $\cpk$, hence must be hit by one of the
$(-1)$-curves, say by $E_1$.
\begin{figure}[ht]
\begin{center}
\includegraphics[width=10cm]{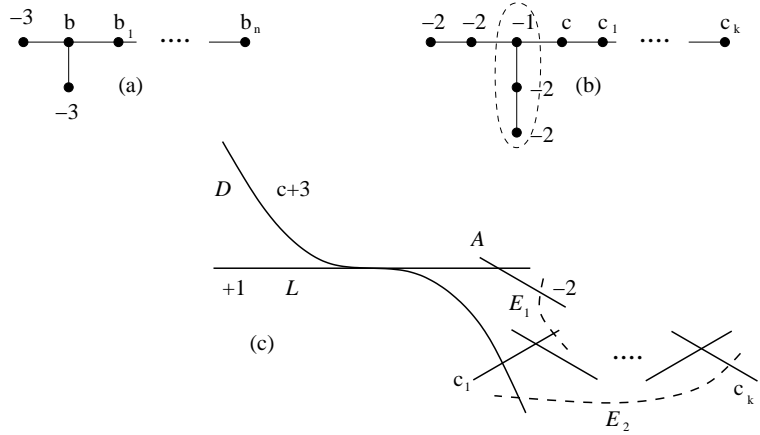}
\end{center}
\caption{The generic graph, its dual, and the configuration of curves after 3
  blow-downs in the family $\fra$. Another possibility for $E_1$ in (c)
  allowed by Proposition~\ref{p:casea} is the curve which intersects $A$ and
  $C_2$ (instead of $C_1$). As usual, we do not depict this second
  possibility.}
\label{f:a}
\end{figure}
\begin{prop}\label{p:casea}
In this case, the curve $E_2$ intersects $D$ and $C_k$, while $E_1$
intersects either $A$ and $C_1$ or $A$ and $C_2$. The corresponding
framings in both cases are $c=-k+2$, $c_2=-3$ and $c_1=c_3=\ldots
c_k=-2$.  In particular, the resolution graph is of the form of
Figure~\ref{f:qhd3}(e) with $p=0$.
\end{prop}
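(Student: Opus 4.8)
The plan is to run the argument of Propositions~\ref{p:c6} and \ref{p:c2pr} essentially verbatim. After gluing the compactifying divisor of Figure~\ref{f:a}(c) to the hypothetical $\bfq$HD filling $X$ and invoking McDuff's Theorem~\ref{thm:McDuff}, the pair $(Z,L)$ blows down to $(\cpk,\mathrm{line})$; under this map $A$ descends to a line, $D$ to a singular cubic, and the chain $C_1,\dots,C_k$ must be eliminated, so that exactly two extra exceptional curves $E_1,E_2$ must be accounted for, with $E_1\cdot A>0$ by hypothesis. The task is then the familiar one of locating $E_1$ and $E_2$ in the diagram and reading off the framings.

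First I would locate $E_2$. Since the entire chain must be blown down and $D$ must acquire its (unique, by Lemma~\ref{l:sing} and the trefoil/Hopf dichotomy) singular point, the verbatim argument of Proposition~\ref{p:c6} --- using Corollary~\ref{c:nocycle} to forbid $E_2$ meeting the chain twice and ruling out interior attachments by the second-singularity argument --- forces $E_2\cdot D=1$ and $E_2\cdot C_k=1$. Next I would locate $E_1$. Because $A$ is a $(-2)$-sphere that must become a line meeting the cubic $D$ three times, $E_1$ is constrained to link $A$ into the chain near its $D$-end: attaching $E_1$ at an interior chain vertex produces, after blow-down, a component meeting $D$ together with two chain neighbours and hence (upon further blow-downs, as in Proposition~\ref{p:c6}) a forbidden second singularity of $D$, while attaching $E_1$ too far from $D$ either yields a cycle of spheres, contradicting Corollary~\ref{c:nocycle}, or leaves $A\cdot D<3$. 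The only surviving positions are $E_1\cdot C_1=1$ and $E_1\cdot C_2=1$, the second being the dashed alternative noted in the caption; the bookkeeping below shows that both reconstruct the same chain data.

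With $E_1$ and $E_2$ pinned down, the framings are forced by the two standard constraints already used above: every chain vertex must present itself as a $(-1)$-curve at the instant it is contracted, and $D$ must reach self-intersection $9$. A direct computation of the dual graph then gives $c=-k+2$, $c_2=-3$ and $c_1=c_3=\dots=c_k=-2$, which is precisely Figure~\ref{f:qhd3}(e) with $p=0$.

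I expect the main obstacle to be the localisation of $E_1$ in the second step: one must simultaneously control the order of the blow-downs so that $A$ is raised to a genuine line, enforce the cubic constraint $A\cdot D=3$, and verify that the two admissible attaching points $C_1$ and $C_2$ really yield identical chain data rather than two distinct graphs. The delicate part, exactly as in the proofs of Propositions~\ref{p:c3pr} and \ref{p:c2pr}, is checking that every blow-down order other than the two listed introduces a second singular point on the image of $D$ whose link is neither the trefoil knot nor the Hopf link.
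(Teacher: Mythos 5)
Your overall strategy (glue the compactifying divisor, apply McDuff's theorem, locate the two exceptional curves, read off the framings) is the same as the paper's, and your final answer matches. But there is a genuine gap at the first step: you assume from the outset that the roles of $E_1$ and $E_2$ are separated, i.e.\ that $E_2$ is disjoint from $A$ and is the curve that both starts the blow-down of the chain and creates the node on $D$, so that the argument of Proposition~\ref{p:c6} applies to it ``verbatim''. A priori $E_2$ may \emph{also} intersect $A$. In that case (as the paper shows) neither curve can play the Proposition~\ref{p:c6} role in the complement of $A$: at most one of $E_1,E_2$ may touch the chain (else one gets a cycle avoiding $L$ and $D$), the chain is forced to have length one, and the configuration becomes $E_2\cdot A=E_2\cdot C_1=E_2\cdot D=1$ with $E_1$ meeting only $A$. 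This rogue configuration passes every filter you list: $E_2\cdot C_k=1$ and $E_2\cdot D=1$ hold; the only cycle present, $E_2$--$C_1$--$D$, runs through $D$ and so is \emph{not} excluded by Corollary~\ref{c:nocycle} (which only forbids cycles in the complement of $L$, and $D\cdot L=3$); the image of $D$ acquires exactly one node; $A$ rises to self-intersection $+1$; and, as the paper itself notes, the images of $A$ and $D$ do intersect exactly three times. Hence your claim that ``the only surviving positions'' for $E_1$ are $C_1$ and $C_2$ is not justified by the cycle / $A\cdot D<3$ arguments you invoke.

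The paper spends the first half of its proof on precisely this case and kills it not by a positional filter but by a framing computation: after blowing down $E_2$ and the unique chain curve, the image of $D$ has self-intersection $c+8$, and requiring this to equal $9$ gives $c=1$, contradicting the negativity of $c$ in the dual graph. Your proposal never performs this computation (your final framing count is applied only to the two configurations you have already selected), so the case distinction $E_2\cdot A>0$ versus $E_2\cdot A=0$ must be inserted and the former excluded by that computation before your localization of $E_2$ is legitimate. Two smaller points in the second half: the paper pins down $E_1$ by the sharper requirement that $A$ must rise from $-2$ to exactly $+1$, which forces $E_1$ to meet the chain in the \emph{penultimate} curve of the blow-down order (this is what singles out $C_1$ and $C_2$), and it separately argues $E_1\cdot D=0$ because the image of $A$ passes through the node of $D$ (so any extra intersection would give $A\cdot D>3$); you flag the constraint $A\cdot D=3$ as an obstacle but never actually rule out $E_1\cdot D>0$.
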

\begin{proof}
We are assuming that $E_1$ intersects $A$. If $E_2$ also intersects
$A$, then only one of them (say $E_2$) can intersect the long chain,
and only in the last curve to be blown down, so we cannot start the
blow-down process on the chain unless it is of length one. We show
that this case never occurs. In fact, to create the singularity on
$D$, the $(-1)$-curve $E_2$ must intersect it, and so by blowing down
$E_2$ and the unique element in the chain, we get that the resulting
$A$ and $D$ will intersect each other three times, hence $E_1$ must be
disjoint from $D$. The self-intersection of the resulting singular
cubic (which must be equal to 9) is $c+8$, implying that $c=1$, which
contradicts the fact that it should be negative.  Therefore $E_2$
cannot intersect $A$, and so it must intersect the long chain, and to
create the singular point on $D$ it must also intersect that
curve. The usual argument already discussed in Proposition~\ref{p:c6}
shows that $E_2$ can intersect the chain only in $C_k$. In order to
raise the self-intersection of $A$ from $-2$ to $1$ we need that
$E_1$ intersect the chain in the penultimate curve to be blown down.
Since after the blow-downs the image of $A$ will pass through the
singular point of $D$, $E_1$ must be disjoint from $D$.  The two very
similar possibilities for the $(-1)$-curves (differing only in the
position of the $E_1$-curve) result the same set of framings, hence
the same set of resolution graphs.
\end{proof}

\subsection{Graphs in $\frb$}
\label{ss:famb}
Similarly to the case of $\frc$, the study of three--legged graphs in
the family $\frb$ falls into two subcases, of $\frb _4$ and $\frb _2$,
depending on the choice of the first blow-up.  The family $\frb _2^3$
defined by (h) of Figure~\ref{f:qhd3}, for example, is a subfamily of
$\frb _2$.

\smallskip
\noindent {\bf{The family $\frb _4$:}} The generic member of this family
(together with the dual graph and the configuration of curves after
three blow-downs) is shown in Figure~\ref{f:b4}.
\begin{figure}[ht]
\begin{center}
\includegraphics[width=10cm]{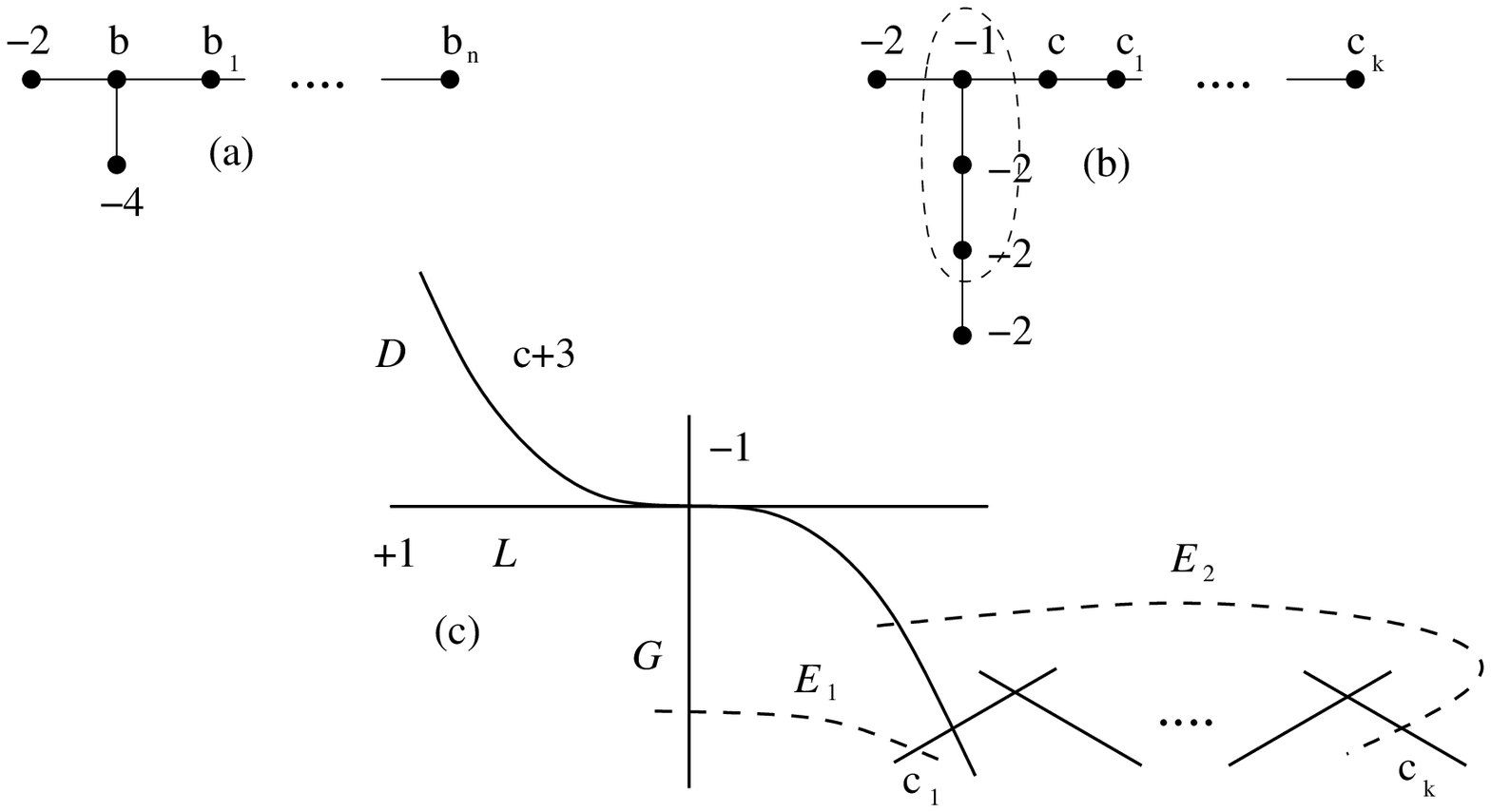}
\end{center}
\caption{The generic graph, its dual, and the configuration of curves after
3 blow-downs in the family $\frb _4$.}
\label{f:b4}
\end{figure}
The usual count of curves shows that we need to locate two
$(-1)$-curves, denoted by $E_1$ and $E_2$. It is clear that one of
them, say $E_1$, must intersect $G$ in order to increase its
self-intersection to $1$.
\begin{prop}\label{p:b4pr}
Under the above hypotheses, the existence of a $\bfq$HD filling
implies that $E_2$ intersects $D$ and $C_k$, while $E_1$ intersects
$G$ and $C_1$. The corresponding framings are $c=-k+2$, $c_1=-3$ and
$c_2=\ldots c_k=-2$. In particular, the resolution graph is of the
form given by Figure~\ref{f:qhd3}(d) with $r=0$.
\end{prop}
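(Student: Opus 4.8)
The plan is to follow the same blow-down analysis used in Proposition~\ref{p:c6} and Proposition~\ref{p:c3pr}, adapted to the $\frb_4$ configuration. After gluing the compactifying divisor of Figure~\ref{f:b4}(c) to a hypothesized $\bfq$HD filling $X$ we obtain a closed symplectic $Z$ containing the $(+1)$-sphere $L$; by McDuff's Theorem~\ref{thm:McDuff} it is a blow-up of $\cpk$, and the curves $C_1,\ldots,C_k$ and $G$ (all disjoint from $L$ and homologically essential) must eventually be blown down. Since $G$ must be blown down but must become a line after the triple blow-down raises its self-intersection to $-1$ and then to $1$, one of the two exceptional curves, say $E_1$, is forced to intersect $G$. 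The curve $D$ meets $L$ with multiplicity $3$ and so descends to a singular cubic in $\cpk$, which pins down the remaining exceptional curve $E_2$.

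First I would argue, exactly as in Case~I of Proposition~\ref{p:c3pr} and Proposition~\ref{p:casea}, that $E_1$ and $E_2$ cannot both meet $G$: if $E_2$ also intersected $G$, then only one exceptional curve would be available to start the sequential blow-down of the long chain, forcing the chain to have length one, and a self-intersection count on the resulting cubic would give $c=1$, contradicting $c<0$. Hence $E_2$ is disjoint from $G$ and must meet the long chain. Since the whole chain $C_1,\ldots,C_k$ has to be blown down and $E_2$ is responsible for producing the single permissible singularity on the image of $D$, the argument of Proposition~\ref{p:c6}---using Corollary~\ref{c:nocycle}, Lemma~\ref{l:sing}, and the trefoil/Hopf-link dichotomy recorded after Lemma~\ref{l:sing}---shows that $E_2$ must meet the chain only at its far end $C_k$ and must meet $D$ exactly once.

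It then remains to locate $E_1$. Having established $E_1\cdot G>0$, I would show that $E_1$ meets the chain at $C_1$: after the blow-down of $E_1$ (together with $G$) raises $G$ to a line that must meet $D$ three times, $E_1$ must be disjoint from $D$, and the chain must be entered at its $G$-end to allow $G$ to be cleared before the cubic acquires a forbidden second singular point. The remaining positions for $E_1$ along the chain are excluded by the same intersection-positivity and singularity-multiplicity bookkeeping as in Proposition~\ref{p:c6}. With the geometric positions of $E_1$ and $E_2$ fixed, the framings are determined by the requirement that every curve in the complement of $L$ be blown down and that $D$ reach self-intersection $9$: this yields $c_2=\cdots=c_k=-2$, the extra intersection with $E_1$ forcing $c_1=-3$, and the count on $D$ giving $c=-k+2$. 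Computing the dual graph then identifies the resolution graph as Figure~\ref{f:qhd3}(d) with $r=0$.

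I expect the main obstacle to be the same delicate case-exclusion that dominates Proposition~\ref{p:c6}: ruling out all intermediate positions of $E_1$ and $E_2$ along the chain, where one must track how singular points accumulate on the image of $D$ under successive blow-downs and invoke Lemma~\ref{l:sing} together with the trefoil/Hopf-link constraint to forbid a cubic with the wrong singularity type. The combinatorics here is lighter than in $\frc_2$ because there is only the single short leaf $G$ rather than a four-element auxiliary chain, so I anticipate the argument reduces cleanly to the template already set up in the $\frc$ cases.
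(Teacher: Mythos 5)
Your proposal is correct and follows essentially the same route as the paper's proof: force one exceptional curve onto $G$, rule out both exceptional curves meeting $G$, run the Proposition~\ref{p:c6} blow-down argument to place $E_2$ on $C_k$ and $D$, place $E_1$ on $C_1$ and away from $D$ via the line--cubic intersection count, and read off the framings. The only differences are organizational and cosmetic: the paper pins down $E_1$ first (it must meet the chain in the last curve to be blown down, and must miss $D$, since otherwise the images of $G$ and $D$ would intersect at least four times) before invoking the Proposition~\ref{p:c6} argument for $E_2$, and it excludes $E_2\cdot G>0$ more directly than your borrowed length-one/$c=1$ argument --- since $G$ starts at self-intersection $-1$, two exceptional curves meeting $G$ already exhaust its budget, so neither could touch the chain and the nonempty chain could never be blown down at all.
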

\begin{proof}
If $E_2$ also intersects $G$ then both $E_1$ and $E_2$ must be
disjoint from the chain, hence it cannot be blown down. Therefore we
can assume that $E_2$ is disjoint from $G$, and therefore $E_1$ must
intersect the chain in the last curve to be blown down. The curve
$E_1$ must be disjoint from $D$, since if $E_1$ intersects $D$ then
after two blow-downs the curves resulting from $G$ and $D$ will
intersect at least four times, giving a contradiction. Therefore $E_1$
must be disjoint from $D$, hence $E_2$ intersects the configuration of
curves as is found in the proof of Proposition~\ref{p:c6}.  The only
possibility for the framings is the one given by the proposition.
\end{proof}

\smallskip
\noindent {\bf{The family $\frb _2$:}}
The graphs (with their duals, and the curve configuration we get by
the three blow-downs) are shown in Figure~\ref{f:b2}. The usual curve
count shows that for identifying a $\bfq$HD filling we must find three
$(-1)$-curves $E_1, E_2, E_3$ in the diagram. Suppose that $E_1$
intersects $G$.
\begin{figure}[ht]
\begin{center}
\includegraphics[width=10cm]{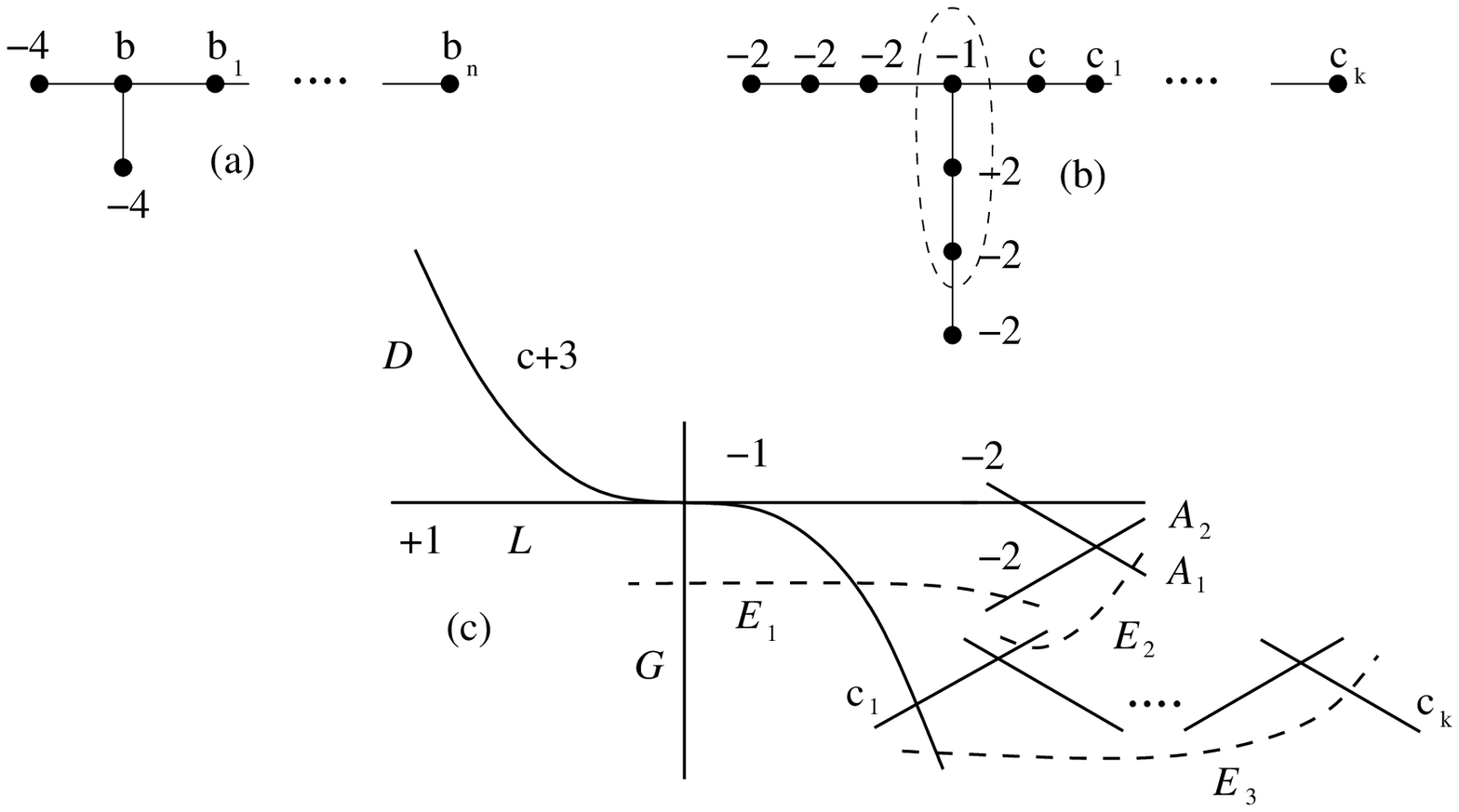}
\end{center}
\caption{The generic graph, its dual, and the configuration of curves
  after 3 blow-downs in the family $\frb _2$.  The curves $E_1, E_2$
  of (c) correspond to the first possibility listed by
  Proposition~\ref{p:caseb2}.}
\label{f:b2}
\end{figure}
\begin{prop}\label{p:caseb2}
Under the circumstance described above, from the existence of a
$\bfq$HD filling it follows that either
\begin{itemize}
\item the curve $E_3$ intersects $D$ and
$C_k$, $E_2$ intersects $C_1$ and $A_1$ and $E_1$ intersects
$G$, $D$ and $A_2$ and therefore the framings  satisfy
$c=-k$, $c_1=-3$ and $c_2=\ldots = c_k=-2$, or
\item $E_3$ intersects $D$ and $C_k$, $E_2$ intersects $A_2$ and $C_2$,
and $E_1$ intersects $G$ and $C_1$ and therefore the framings are given
as $c=-k+2$, $c_1=-3$, $c_2=-4$, $c_3=\ldots =c_k=-2$, or
\item $E_3$ intersects $D$ and $C_k$, $E_2$ intersects $A_2$ and $C_1$,
and $E_1$ intersects $G$ and $C_2$ and the framings are
$c=-k+2$, $c_1=-3$, $c_2=-4$, $c_3=\ldots =c_k=-2$.
\end{itemize}
In particular, the resolution graph is of the form of
Figure~\ref{f:qhd3}(h) in the first case and of
Figure~\ref{f:qhd3}(g) (with $p=1, r=0, q=k-4$) in the second and
third cases.
\end{prop}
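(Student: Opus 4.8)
The plan is to follow the same blow-down analysis as in Propositions~\ref{p:c6} and~\ref{p:c3pr}, but now with three exceptional curves $E_1,E_2,E_3$ to locate rather than two. Having glued the compactifying divisor of Figure~\ref{f:b2}(c) to a hypothesized $\bfq$HD filling $X$ to obtain a closed symplectic $4$-manifold $Z$ containing the $(+1)$-sphere $L$, McDuff's Theorem~\ref{thm:McDuff} guarantees $Z$ is rational, and the usual count of $b_2(Z)$ forces exactly three $(-1)$-curves beyond $L$ to appear in the sequential blow-down to $(\cpk,\mathrm{line})$. The curves $A_1,A_2,B_1$ (the components of the short legs disjoint from $L$), together with the long chain $C_1,\dots,C_k$, must all be blown down since they are homologically essential and disjoint from the $(+1)$-sphere $L$ (by Lemma~\ref{lem:nonexnonneg}). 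First I would record the fixed constraints: $D$ must descend to a singular cubic, so some $E_i$ must meet $D$ exactly once to create its (and only its) unique permitted singularity; and, as in the earlier proofs, no $E_i$ can meet the long chain twice (Corollary~\ref{c:nocycle}) nor create a double singular point or a point of multiplicity $>2$ on $D$ (Lemma~\ref{l:sing} and the trefoil/Hopf-link dichotomy).

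The combinatorial heart of the argument is a systematic case analysis organized by the behaviour of $G$ and the roles of the two short legs. Since we assume $E_1\cdot G>0$, I would first show that after the three blow-downs $G$ becomes a curve of controlled self-intersection, and track how its intersection with $D$ is forced to be a triple tangency in the limit, exactly as in Case~I of Proposition~\ref{p:c3pr}. The key structural fact to establish is that $E_3$ must initiate the blow-down of the long chain and must simultaneously meet $D$ (to produce the cubic's singularity), so by the standard argument of Proposition~\ref{p:c6} the curve $E_3$ can only meet the chain at its far end $C_k$ and meet $D$ once. This pins down $E_3$ in all three listed configurations and leaves $E_1$ and $E_2$ to distribute the remaining incidences with $A_1,A_2,G$ and the near end of the chain $C_1,C_2$.

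The main obstacle, and where the bulk of the work lies, is the bookkeeping that distinguishes the three surviving possibilities from the many a priori incidence patterns that must be excluded. The subtlety is that with three exceptional curves the order of blow-downs genuinely matters: a curve meeting $D$ together with a chain curve can, when blown down in the wrong order, force an extra tangency or raise a multiplicity on $D$ beyond what a nodal or $(2,3)$-cuspidal cubic allows. I would rule out the forbidden patterns by the now-familiar device: assume a wrong incidence, perform the blow-downs in the order dictated by which chain curve is blown down first, and derive either a self-intersection-$0$ sphere off $L$ (Lemma~\ref{lem:nonexnonneg}), a cycle of spheres (Corollary~\ref{c:nocycle}), or a cubic with two singular points or an inadmissible local link (Lemma~\ref{l:sing} and the adjunction discussion). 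The three admissible configurations are precisely those in which the incidences of $E_1$ and $E_2$ with $G,D,A_1,A_2$ and $C_1,C_2$ are compatible with $D$ acquiring exactly one node and $G$ meeting the final cubic three times; I expect the hardest sub-case to be separating the second and third listed configurations (where $E_1$ and $E_2$ swap their incidences with $C_1$ and $C_2$), since these yield the same framings and must be shown to be the only survivors. Once the incidences are fixed, the framings follow by the routine self-intersection count: demanding $c_i=-2$ along the portion of the chain that is blown down cleanly, reading off the corrected values at the curves hit by the $E_i$, and imposing that $D$ reach self-intersection $9$ forces $c=-k$ or $c=-k+2$ as stated. A final computation of the dual graphs then matches each case to the asserted graph in Figure~\ref{f:qhd3}.
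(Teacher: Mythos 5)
Your overall strategy (glue the compactifying divisor, invoke McDuff's theorem, count the exceptional curves, and run an incidence case analysis constrained by Lemma~\ref{lem:nonexnonneg}, Corollary~\ref{c:nocycle}, Lemma~\ref{l:sing} and the admissible singularities of a cubic) is the same as the paper's, but your reading of the configuration of Figure~\ref{f:b2}(c) contains an error that breaks the argument. You list ``$A_1,A_2,B_1$'' as components of short legs disjoint from $L$ that must all be blown down. There is no curve $B_1$ in this family, and, more importantly, $A_1$ is \emph{not} disjoint from $L$: it meets $L$ once and therefore can never be blown down; it survives the process and descends to a line in $\cpk$, so its self-intersection must rise from $-2$ to exactly $+1$. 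This is forced by your own counting principle: the number of extra exceptional curves equals the number of surviving components of $K$ minus one, so three curves $E_1,E_2,E_3$ appear precisely because there are four survivors $L,G,A_1,D$; with your reading (survivors $L,G,D$ only) the count would give two, not three, so your setup is internally inconsistent. The constraints coming from the surviving line $A_1$ --- its image must have square $1$ and no more, meet the image of $D$ three times and the image of $G$ once --- are exactly the engine of the paper's case analysis: they are what rules out the case $E_2\cdot G=1$ altogether, and what locates which exceptional curve meets $A_1$ (or $A_2$) and where it may touch the chain. None of this can be recovered from your setup.

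A second, related gap: after pinning down $E_3$ you propose to distribute the remaining incidences of $E_1,E_2$ among ``$A_1,A_2,G$ and the near end of the chain $C_1,C_2$,'' which implicitly assumes $E_1$ and $E_2$ are disjoint from $D$. This contradicts the first configuration of the statement, in which $E_1$ meets $G$, $D$ \emph{and} $A_2$: there $E_1\cdot D=1$ is forced precisely so that the images of the lines $G$ and $A_1$ meet the image of the cubic $D$ three times each. As written, your plan could not produce the first bullet of the proposition at all. Finally, your expectation that the hard point is to ``separate'' the second and third configurations is off the mark: they are not separated --- both survive, giving the same framings and the same graph --- and the genuinely laborious steps in the paper are the exclusion of the case where a second exceptional curve meets $G$ (the paper's Case I) and the elimination of the many incidence patterns in the case where $E_1$ meets the chain (Case III), both of which hinge on the surviving-line constraints for $A_1$ that your sketch discards.
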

\begin{proof}
Since $G$ has self-intersection $-1$ and it intersects the curve $L$
once, its self-intersection must increase to $1$, hence either another
$(-1)$-curve, say $E_2$, intersects $G$ or $E_1$ intersects either
$A_2$ or the chain. Note that $E_1$ cannot intersect both $A_2$ and
the chain, since if $E_1\cdot C_i=1$ and $E_2\cdot A=1$, then after
$E_1$ and the image of $A_2$ are blown down the image of $C_i$ will
become tangent to the image of $G$. When the image of $C_i$ is
eventually blown down, the image of $G$ will gain a singularity, which
is impossible for a line in $\cpk$.

\noindent {\bf Case I:} $E_2\cdot G=1$.  In this case both $E_1$ and
$E_2$ must be disjoint from $A_2$ and the chain, hence $E_3$
intersects both $A_2$ and the chain.  Also, since $G$ and $A_1$ will
intersect after the blowing down process has been carried out, $E_1$
or $E_2$ (say $E_1$) must intersect $A_1$. After blowing down the
$E_i$'s and the image of $A_2$, the self-intersection of $A_1$ will
already be zero, hence $E_3$ can only intersect the chain in the last
curve to get blown down, which is possible only if the chain is of
length one.  If $E_3$ is disjoint from $D$ then (in order for $A_1$ to
intersect $D$ three times) $E_1$ must intersect $D$ twice, and hence
(in order to avoid $G\cdot D>3$) the curve $E_2$ must be disjoint from
$D$. Now we can easily see that the self-intersection of $D$ increases
to $c+8$ after all the blow-downs have been performed, and since it
should be equal to 9, we deduce that $c=1$, contradicting the fact
that $c$ is negative. If $E_3$ intersects $D$ then after blowing
down $E_3$ and then sequentially blowing down the images of $A_2$ and
the unique element in the chain we get a singularity on $D$ of
multiplicity $3$, a contradiction. This shows that Case I, in fact,
cannot occur.

\noindent {\bf Case II:}
$E_1\cdot A_2=1$. Then both $E_2$ and $E_3$ must be disjoint from $G$, and one of them
(say $E_2$) intersects $A_1$. To increase the self-intersection of
$A_1$, the curve $E_2$ should intersect the chain in the last curve to
be blown down.  Since the image of $G$ will intersect $D$, we see that
$E_1\cdot D=1$.  This implies that after blowing down $E_1$ and $A_2$,
the curve $A_1$ will intersect $D$ once, therefore $E_2$ cannot
intersect $D$ (since it would add three to $A_1 \cdot D$). Now the
usual argument from the proof of Proposition~\ref{p:c6} shows that
$E_3$ starts the blow-down of the chain, and it also intersects $D$
in one point, leading to the first case of the proposition.

\noindent {\bf Case III:} $E_1\cdot C_i=1$.  Recall that by the
previous argument we can assume that $E_2\cdot G= E_3\cdot G=0$. If
$E_2$ and $E_3$ are both disjoint from the chain, then the chain must
have length one. But then, if $E_1\cdot D=0$, then, after completing
the blowing down process, the intersection number of the images of $G$
and $D$ will be less than $3$ and if $E_1\cdot D=1$, then, after
completing the blowing down process, the intersection number of the
images of $G$ and $D$ will be greater than $3$, both contradicting the
fact that the intersection number of a line and a cubic in $\cpk$ is
equal to three. So we may assume that $E_3$ intersects the chain, say
$E_3\cdot C_l=1$, and, by the preceding argument, that $E_1\cdot
D=0$. If $E_3\cdot D=0$, again we find that, after the blowing down
process has been carried out, the intersection number of the images of
$G$ and $D$ will be $2$, a contradiction. So we must have $E_3\cdot
D=1$. Now observe that we must have $E_3\cdot A_2=0$. Indeed, if
$E_3\cdot C_j=1$ and $E_3\cdot A_2=1$, then after $E_3$ and the image
of $A_2$ are blown down, the image $C_j '$ of $C_j$ will be tangent to
the image $D$. It is now easy to see that after the blowing down
process is complete the image of $D$ will have more than one singular
point or a singularity of multiplicity greater than $2$, both of which
are impossible for a cubic in $\cpk$. Since $A_2$ must be hit by a
$(-1)$-curve, we deduce that $E_2\cdot A_2=1$. We now check that $E_1$
and $E_3$ are disjoint from $A_1$. If $E_1\cdot A_1=1$, then after
blowing down $E_1$ the images of $G$ and $A_1$ will intersect in a
point and the image of $C_i$ will pass through that point. When the
image of $C_i$ is eventually blown down, the intersection number of
the images of $G$ and $A_2$ will be $2$, which is impossible for a
pair of lines in $\cpk$. If $E_3\cdot A_1=1$, then the chain must have
length one (to prevent the intersection number of the images of $A_1$
and $D$ going above $3$). Usual simple calculation shows that $c$ must
be $1$ contradicting $c<0$. We have thus checked that $E_1$ and $E_3$
are disjoint from $A_1$. It follows that, in order for the
self-intersection number of the image of $A_1$ to increase to $1$, we
must have that $E_2$ intersects the string in the penultimate curve of
the chain to get blown down. Suppose that $E_2\cdot C_j=1$. Now if
$l<k$, then it is easy to see that we must have $k=2,\ l=1$ and
$j=2$. But then, after completing the blowing down process, the
intersection number of the images of $A_1$ and $D$ will be $2$, a
contradiction. Thus we must have $l=k$. It follows that we must have
$j=1$ or $2$. If $j=1$, then we must have $i=2$, and if $j=2$, then we
must have $i=1$. The blowing down process now fixes $c,
c_1,\ldots,c_k$, which depends only on $k$ and is independent of $j$,
giving $c=-k+2, c_1=-3, c_2=-4$ and $c_3=\ldots =c_k=-2$. The two
possible configurations of the curves $E_1, E_2, E_3$ (providing the
same dual graphs) are the ones given by the proposition.
\end{proof}

\begin{proof}[Proof of Theorem~\ref{t:main}]
Consider a small Seifert singularity $S_{\G}$. Since a smoothing of
$S_{\G}$ provides a weak symplectic filling of the Milnor fillable
contact structure $(Y_{\G} , \xi _{\G })$ of the link, the implication
$(1)\Rightarrow (2)$ follows. The implication $(2)\Rightarrow (3)$ is
a direct consequence of the combination of Propositions~\ref{p:c6},
\ref{p:c3pr}, \ref{p:c2pr}, \ref{p:casea}, \ref{p:b4pr} and
\ref{p:caseb2}, together with Theorem~\ref{t:ssw}.

In order to verify the implication $(3)\Rightarrow (1)$, we need to
produce $\bfq $HD smoothings for singularities with resolution graphs
in ${\mathcal {QHD}}^3$. This result follows from
\cite[Example~8.4]{SSW} for the graphs of Figure~\ref{f:qhd3}(a), (b)
and (c), and from \cite[Example~8.3]{SSW} for (d), (e), (f) and
(g). For singularities with resolution graphs given by
Figure~\ref{f:qhd3}(h), (i) and (j) we give an argument resting on the
theorem of Pinkham \cite{Pink} as formulated in
\cite[Theorem~8.1]{SSW}. In order to apply this result for a
singularity $S_{\G}$, we need to find an embedding of rational curves
in a rational surface $R$ intersecting each other according to the
dual graph $\G '$ with the property that rk$H_2(R; \bfz )=\vert \G
'\vert$.
\begin{figure}[ht]
\begin{center}
\includegraphics[width=8cm]{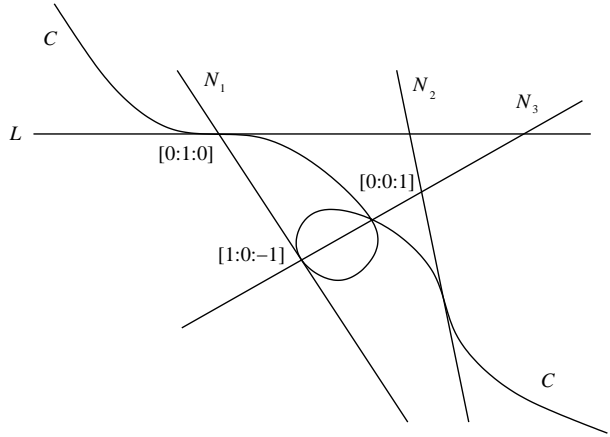}
\end{center}
\caption{The curves used in the constructions of the embeddings.}
\label{f:curves}
\end{figure}

To this end, let us consider the singular cubic $C$ given by equation
$f(x,y,z)=y^2z-x^3-x^2z$ in $\cpk$ and the lines $L, N_1, N_2$ and
$N_3$ given by the equations $\{ z=0\}$, $\{ x+z=0\}$, $\{
y-(x+\frac{8}{9}z)\sqrt{3}i=0\}$, and $\{ y=0 \}$, respectively,
cf. Figure~\ref{f:curves}.  (The line $L$ is tangent to $C$ at one of
its inflection point $[0:1:0]$; $N_1$ is tangent to $C$ at $[-1:0:1]$
and further intersects it in $[0:1:0]$; $N_2$ is tangent to $C$ in
another inflection point $[-\frac{4}{3}:-i\frac{4}{3\sqrt{3}}:1]$.)

By sequentially blowing down the $(-1)$--curves of Figure~\ref{f:c6}(c)
(starting with the dashed one), we are led to a configuration of
curves involving a singular cubic and a tangent at one of its
inflection points. Since $C$ and $L$ provide such a configuration, the
reverse of the above blow-down procedure gives an embedding of the
configuration of Figure~\ref{f:c6}(c), and therefore of (b) into some
blow-up of $\cpk$. A simple count of the applied blow-ups shows that
this embedding is exactly of the type needed to apply Pinkham's
result, hence this argument shows that graphs of
Figure~\ref{f:qhd3}(f) correspond to singularities with $\bfq$HD
smoothings. The same line of argument, with various starting
configurations, then shows that all the remaining graphs of
Figure~\ref{f:qhd3} correspond to singularities with $\bfq$HD
smoothings: a suitable starting configuration for the graphs of
Figure~\ref{f:qhd3}(h) is the configuration given by the curves $L, C,
N_1$ and $ N_3$ of Figure~\ref{f:curves}, for (i) $L,C, N_2$ and for
(j) $L, C, N_1$ will be a convenient choice. With this last step, the
proof of Theorem~\ref{t:main} is now complete.
\end{proof}

\section{Spherical Seifert singularities}
\label{s:fourleg}
Next we turn to the examination of generic spherical Seifert
singularities.  Since a star-shaped graph in $\fra \cup \frb \cup \frc$
can have at most four legs, it follows from Theorem~\ref{t:ssw} that
if a spherical Seifert singularity admits a $\bfq$HD smoothing (or the
Milnor fillable contact structure on its link admits a $\bfq$HD
filling) then the valency of the central vertex is at most four. The
three-legged graphs were analyzed in the previous section, so now we
will focus on the case of four-legged graphs. Once again, it follows
from Theorem~\ref{t:ssw} that we only need to consider graphs in $\fra
\cup \frb \cup \frc$.

\subsection{The family $\frc$}
We start by considering the four-legged graphs in the family $\frc$.
\begin{figure}[ht]
\begin{center}
\includegraphics[width=10cm]{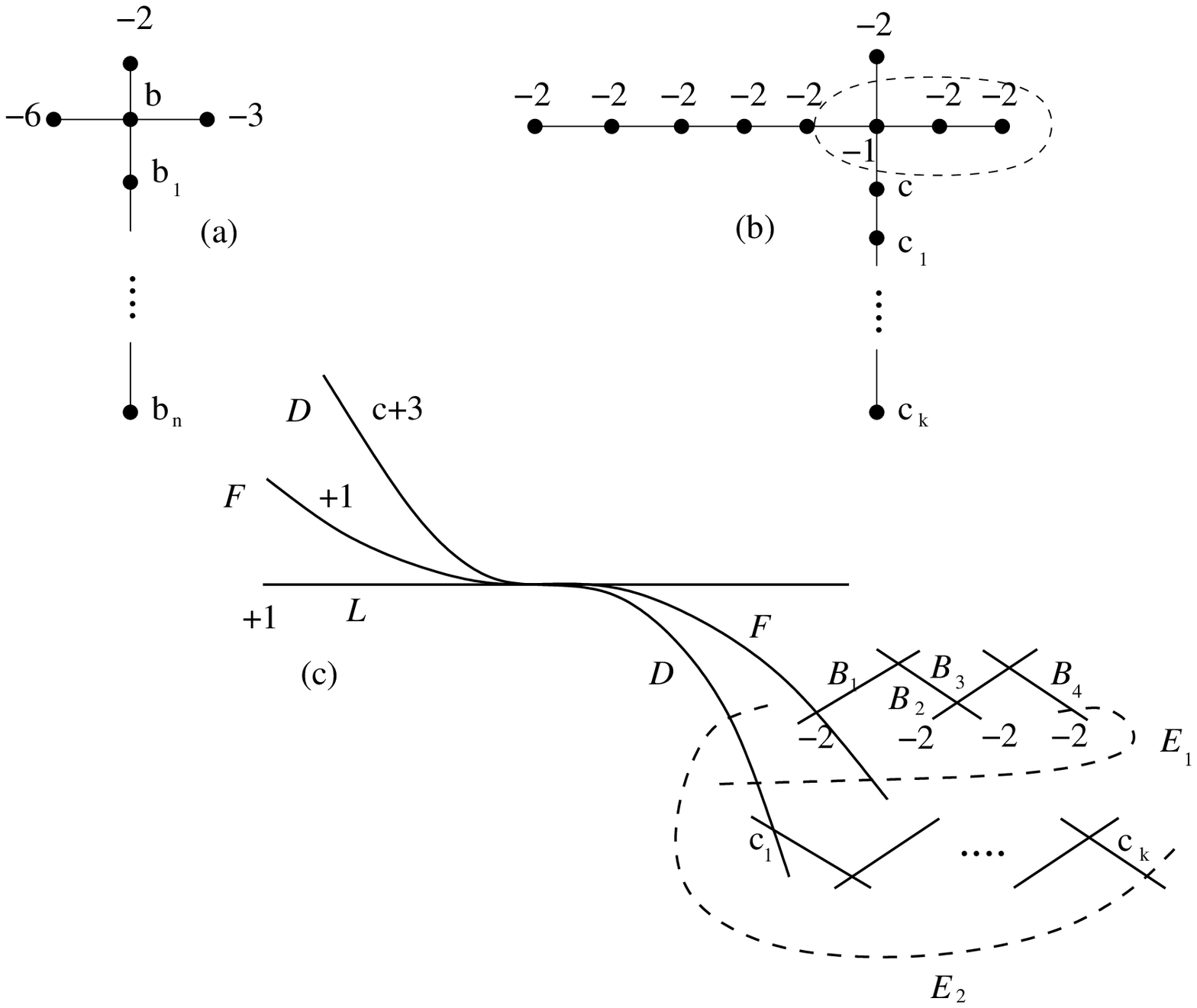}
\end{center}
\caption{The four-legged graphs in $\frc$.}
\label{f:c4leg}
\end{figure}
The generic four-legged member $\G$ of $\frc$ is given in
Figure~\ref{f:c4leg}(a), with the dual graph given by
Figure~\ref{f:c4leg}(b). After three blow-downs we obtain the
configuration $K$ depicted in Figure~\ref{f:c4leg}(c). As before, the
horizontal $(+1)$-curve will be denoted $L$ and the two curves which
are triply tangent to $L$ will be denoted $F$ and $D$, with $F$ being
the curve with square $+1$. The chain of $(-2)$-curves connected to
the curve $F$ will be denoted $B_1,\ldots,B_4$, with $B_1$
intersecting $F$ and the chain of curves intersecting $D$ will be
denoted $C_1,\ldots,C_k$, with $C_1$ intersecting $D$.  By
symplectically gluing $K$ to a $\bfq$HD filling $X$ we get a closed
symplectic 4-manifold $Z$, and the usual elementary homological
computation shows that (since $X$ is a $\bfq$HD) there must be
precisely two $(-1)$-curves, say $E_1$ and $E_2$, in the complement of
$L$ that are not contained in the strings $B_1,\ldots,B_4$ and
$C_1,\ldots,C_k$. Since the string $B_1,\ldots,B_4$ must be
transformed into a configuration which can be sequentially blown down
after blowing down $E_1$ and $E_2$, it follows that at least one of
these $(-1)$-curves must intersect $B_1\cup\cdots\cup B_4$. Assume,
without loss of generality, that $E_1$ intersects $B_1\cup\cdots\cup
B_4$.

\begin{prop}\label{p:c4leg}
By assuming the existence of the $\bfq$HD filling $X$ we get that
$E_1$ intersects $D$, $F$ and $B_4$, while $E_2$ intersects $D$ and
$C_k$. The framings then are given by $c=-k-3$ and $c_1=\ldots
=c_k=-2$ (with $k\geq 0$). In particular, the graph of
Figure~\ref{f:c4leg}(a) should be of the form
Figure~\ref{f:qhd4uj}(c).
\end{prop}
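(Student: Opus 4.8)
The plan is to adapt the blow-down analysis used for the three-legged family $\frc_6$ in Proposition~\ref{p:c6}, now carried out in the presence of \emph{two} triple-tangent cubics $D$ and $F$ and two chains $B_1,\ldots,B_4$ and $C_1,\ldots,C_k$. The starting observation is homological: since $X$ is a $\bfq$HD, gluing $K$ to $X$ produces $Z$ with $b_2(Z)=k+6$ or thereabouts, and by McDuff's Theorem~\ref{thm:McDuff} the presence of the symplectic $(+1)$-sphere $L$ forces $Z$ to be a rational surface diffeomorphic to $\cpk\#(k+5)\cpkk$. Blowing down away from $L$ by repeated use of Lemma~\ref{lem:exccurves} must reach $(\cpk,{\rm line})$; since the chains $B_i$ and $C_i$ are disjoint from $L$ and homologically essential, they must all be blown down, while $D$ and $F$ descend to cubics in $\cpk$. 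This accounts for all but two exceptional curves, which gives exactly the two $(-1)$-curves $E_1$ and $E_2$ we must locate. The curve $F$, having square $+1$ in the compactified picture and meeting $L$ three times, will descend to a \emph{line} (square $1$ after all blow-downs), so it cannot absorb further blow-ups; this is the four-legged analogue of the $A_1$/$G$ rigidity exploited throughout Section~\ref{s:threeleg}.

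First I would record the positivity-of-intersections facts and the cusp/node dichotomy for a degree-$3$ rational curve, so that $D$ — being the image of a rational curve meeting $L$ triply — must acquire exactly one singular point (a node or a $(2,3)$-cusp) and no more, by Lemma~\ref{l:sing} and the adjunction remark. Then I would run the case analysis on where $E_1$ meets the $B$-chain. Assuming $E_1$ meets $B_1\cup\cdots\cup B_4$, I would first argue, exactly as in the proof of Proposition~\ref{p:c6}, that $E_1$ can only meet this chain at an endpoint: an interior intersection, upon sequential blow-down, forces a second singularity or an excessive-multiplicity singularity on whichever cubic the chain feeds into, contradicting the node/cusp constraint via Lemma~\ref{l:sing}. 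The same argument applied to the $C$-chain shows $E_2$ must meet it at its far endpoint $C_k$ and must also meet $D$ once (to produce $D$'s required singularity), giving precisely the configuration claimed for $E_2$.

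The genuinely new feature — and the step I expect to be the main obstacle — is pinning down $E_1$ in the presence of \emph{two} cubics. Because $F$ is forced to descend to a line while $D$ descends to a singular cubic, and because the $B$-chain feeds into $F$ while being triply tangent configuration shares the central vertex, $E_1$ must simultaneously interact with $F$ (to raise its square appropriately / start the $B$-chain collapse), with $B_4$ (the far end of the chain), and with $D$ (so that the correct incidences between the two cubics and the line are realized after all blow-downs). I would rule out the alternatives by the now-standard mechanism: if $E_1$ fails to meet $D$, the images of $D$ and $F$ end up intersecting the wrong number of times (a line and a cubic in $\cpk$ meet in exactly three points, and two cubics in nine), and if $E_1$ meets the chain interiorly or meets $D$ with the wrong multiplicity, a forbidden second singularity or multiplicity-$\geq 3$ point appears on $D$. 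Corollary~\ref{c:nocycle} forbids any cycle among the spheres, and Lemma~\ref{lem:nonexnonneg} forbids nonnegative-square spheres off $L$, which together eliminate the remaining stray incidences.

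Once the incidence pattern $E_1\cdot D=E_1\cdot F=E_1\cdot B_4=1$ and $E_2\cdot D=E_2\cdot C_k=1$ is fixed, the framings follow by bookkeeping. Requiring that the $C$-chain collapse sequentially forces $c_1=\cdots=c_k=-2$; requiring that $D$ reach self-intersection $9$ in $\cpk$ after all blow-downs pins down the central framing as $c=-k-3$. I would close by computing the dual graph from $(c,c_1,\ldots,c_k)$ and checking it matches Figure~\ref{f:qhd4uj}(c), which is direct and routine. The constraint that the central framing be $<-2$ (hypothesis of Theorem~\ref{t:main2}) is what guarantees the triple blow-down producing $L$ and the $(+1)$-curve $F$ can be initiated in the first place, as flagged in the Remarks following Theorem~\ref{t:main2}.
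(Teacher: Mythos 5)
Your proposal founders on a single but decisive misidentification: the curve $F$ does \emph{not} descend to a line in $\cpk$. The degree of the image of a curve under the blowing down process is its intersection number with $L$, not its self-intersection; since $F$ is triply tangent to $L$ (so $F\cdot L=3$, exactly as for $D$), its image must be a rational \emph{cubic}, hence must end with self-intersection number $9$ and must acquire exactly one singular point (a node or a $(2,3)$-cusp), just like $D$. Your ``rigidity'' claim --- that $F$ keeps square $1$ and cannot absorb further blow-downs --- is moreover internally inconsistent with your own setup: $B_1$ meets $F$, so collapsing the $B$-chain necessarily raises the square of the image of $F$ above $1$. This error propagates into every step where you try to pin down $E_1$: you test configurations against the count ``a line and a cubic meet in $3$ points,'' whereas the correct constraint is that the images of $F$ and $D$, being two irreducible cubics, must meet in exactly $9$ points, and that \emph{both} must become singular. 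In the paper's proof it is precisely these constraints that do the work: in the surviving case $E_1\cdot B_4=1$, $E_1\cdot(C_1\cup\cdots\cup C_k)=0$, one shows $E_1\cdot F=1$ because otherwise the image of $F$ stays smooth after collapsing $E_1$ and the $B$-chain, which forces $E_2\cdot F'=2$ and an empty $C$-string and then an intersection number of the images of $F$ and $D$ below $9$; and once $E_1\cdot F=1$ is known, collapsing $E_1,B_4,\ldots,B_1$ turns $F$ into a nodal curve of square $9$, which is exactly what forces $E_2\cdot F=0$ and $E_1\cdot D=1$.

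Beyond this, your outline omits the two case analyses that constitute most of the paper's proof: ruling out $E_1\cdot B_1=1$ (``$E_1$ meets the $B$-chain at an endpoint'' is not enough, since the two endpoints are not interchangeable --- $B_1$ touches $F$ and $B_4$ does not), and ruling out the possibility that $E_1$ meets both $B_4$ and the $C$-chain. Both eliminations require tracking how singularities accumulate on the images of the \emph{two} cubics (via Lemma~\ref{l:sing} and the node/cusp dichotomy) and checking intersection numbers against $F\cdot D=9$ and the requirement that each cubic have square $9$; none of this follows from the line-based bookkeeping you propose, which would instead produce false contradictions or let wrong configurations survive. Smaller points: $b_2(Z)=k+7$ (the components of $K$ number $k+7$), not $k+6$, though your conclusion that exactly two extra $(-1)$-curves exist is correct; the paper first disposes of $k=0$ separately before assuming $k>0$, which you do not address; and interior intersections of $E_1$ with the $B$-chain are excluded in the paper by Lemma~\ref{lem:nonexnonneg} (a $(+1)$-sphere would appear in the complement of $L$), a cleaner mechanism than the second-singularity argument you import from Proposition~\ref{p:c6}.
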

\begin{proof}
If $E_1\cdot B_2=1$ or $E_1\cdot B_3=1$, then blowing down $E_1$ and
then sequentially blowing down the images of $B_2$ and $B_3$ leads to
a $(+1)$-curve (the image of $B_1$ or $B_4$) in the complement of $L$,
contradicting Lemma~\ref{lem:nonexnonneg}. Hence we can assume that
either $E_1\cdot B_1=1$ or $E_1\cdot B_4=1$.  First we argue that
$k>0$ can be assumed in Figure~\ref{f:c4leg}(b).  Indeed, $k=0$
implies that in Figure~\ref{f:c4leg}(a) we have $b=-3$, $b_1=\ldots =
b_n=-2$. Among these possibilities only the one with $n=2$ is in
$\frc$, and that particular graph appears among the ones of
Figure~\ref{f:qhd4uj}(c). For this reason, from now on we will assume
that $k>0$.

\medskip
{\bf Case I:} Suppose that $E_1\cdot B_1=1$.  Note first that
$E_1\cdot F=0$. Indeed, suppose that $E_1\cdot F\geq 1$. If $E_1\cdot
F>1$, then blowing down $E_1$ would lead to a point on the image
$F^\prime$ of $F$ under the blowing down map through which at least
two branches of $F^\prime$ pass. Also the intersection number of the
image $B_1^\prime$ of $B_1$ and $F^\prime $ will be at least three. By
perturbing the almost complex structure slightly, we can assume that
$B_1^\prime$ and $F^\prime$ intersect transversely. Then blowing down
$B_1^\prime$ we see that the image $F^{\prime\prime}$ of $F^\prime$
will have two singularities, which by Lemma~\ref{l:sing} contradicts
the fact that $F^{\prime\prime}$ will eventually blow down to a cubic
in $\cpk$. A similar contradiction arises if $E_1\cdot F = 1$, after
blowing down both $E_1$ and $B_1^\prime$. There are now two
possibilities: $E_1\cdot (C_1\cup\cdots\cup C_k)=1$ or $E_1\cdot
(C_1\cup\cdots\cup C_k)=0$. Note that $E_1\cdot (C_1\cup\cdots\cup
C_k)>1$ is impossible by Corollary~\ref{c:nocycle}.

\medskip
{\bf {IA.}} $E_1\cdot (C_1\cup\cdots\cup C_k)=1$.
Suppose that $E_1\cdot C_i=1$. After blowing down $E_1$ and then
sequentially blowing down the images of $B_1,\ldots,B_4$, observe that
the image $C_i^\prime$ of $C_i$ will be $4$-fold tangent to the image
$F^\prime$ of $F$. Perturbing the almost complex structure, we may
assume that $C_i^\prime$ intersects $F^\prime$
transversely. Eventually $C_i^\prime$ will get blown down and this
will create a singularity on the image of $F$ that is not allowed for
a cubic in $\cpk$, since the link of its singularity has four
components, providing the desired contradiction.

\medskip
{\bf {IB.}} $E_1\cdot (C_1\cup\cdots\cup C_k)=0$.  We have $E_1\cdot
D=0$ or $E_1\cdot D=1$. ($E_1\cdot D>1$ is not allowed as blowing down
$E_1$, then perturbing the almost complex structure so that
$B_1^\prime$, the image of $B_1$, and $D^\prime$, the image of $D$,
intersect transversely and then blowing down $B_1^\prime$ would create
two nodes on the image of $D^\prime$, contradicting
Lemma~\ref{l:sing}.) After blowing down $E_1$ and then sequentially
blowing down the images of $B_1,\ldots,B_4$, the intersection number
of the images $F^\prime$ and $D^\prime$ of $F$ and $D$, respectively,
will be either $3$ or $7$. Now, by arguing as in the proof of
Proposition~\ref{p:c6}, we can show that $E_2$ must intersect the last
curve $C_k$ in the string $C_1,\ldots,C_k$ and the curve
$D^\prime$. $E_2$ must also intersect $F^\prime$, otherwise, after the
blowing down process has been carried out, the image of $F^\prime$
would be nonsingular and rational, which is impossible for a cubic in
$\cpk$. In fact, it is necessary that $E_2\cdot F^\prime=2$, otherwise
the image of $F^\prime$ will either be smooth or have the wrong type
of singularity. Also it is necessary that the string $C_1,\ldots,C_k$
be empty, otherwise, after blowing down $E_2$, when the image of $C_k$
is collapsed a further singularity will be introduced in the image of
$F^\prime$. Now the condition that $D^\prime$ gets blown down to a
rational cubic in $\cpk$ forces us to have $E_2\cdot
D^\prime=2$. Blowing down $E_2$, we see now that the intersection
number of the images of $D^\prime$ and $F^\prime$ will be either $7$
or $11$ (depending on $E_1\cdot D =0 $ or $1$), which is impossible
for a pair of irreducible cubic curves in $\cpk$. In conclusion, we
found that $E_1\cdot B_1=1$ leads to contradiction, hence we can
consider

\medskip
{\bf Case II:} $E_1\cdot B_4=1$.
As before, we distinguish two cases according to the intersection of
$E_1$ with the chain $C_1 \cup \ldots \cup C_k$.

\medskip
{\bf {IIA.}} $E_1\cdot (C_1\cup\cdots\cup C_k)=1$.
Suppose that $E_1\cdot C_i=1$. Note that $E_1\cdot F=0$, otherwise the
image of $F$ after completing the blowing down process would have more
than one singular points. For a similar reason, $E_1\cdot D$ must also
be $0$. We now divide $E_1\cdot C_i=1$ into three cases.

\medskip
{(i)} Suppose that $i=1$, i.e., $E_1$ intersects the chain in the
curve intersecting $D$. Blow down $E_1$, then sequentially blow down
the images of $B_4,\ldots,B_1$ and then the images of $C_1,\ldots,C_l$
until the resulting string $C_{l+1}^\prime,\ldots,C_k^\prime$ attached
to $D^\prime$, the image of $D$, is minimal, that is, contains no
$(-1)$-curves. Let $F^\prime$ denote the image of $F$. Then
$F^\prime\cdot D^\prime=l+2$, where $0\leq l\leq k$. First suppose that
$l<k$. Then, by arguing as in the proof of Proposition~\ref{p:c6}, one
can show that $E_2$ must intersect the last curve $C_k^\prime$ of the
string $C_{l+1}^\prime,\ldots,C_k^\prime$ and the curves $F^\prime$
and $D^\prime$, each once transversally. Now blow down $E_2$ and then
sequentially blow down the images of
$C_k^\prime,\ldots,C_{l+1}^\prime$. Then the images of $F^\prime$ and
$D^\prime$ will be nodal curves and for the intersection number of
them to be $9$ we require that $k=2$. However, to make the
self-intersection number of the image of $F^\prime$ equal $9$ we
require that $k=3$. This contradiction show that the case $l<k$ cannot
occur.  Now suppose that $l=k$. Then to introduce singularities of the
right type into the images of the curves $F^\prime$ and $D^\prime$ we
require that $E_2\cdot F^\prime=2$ and $E_2\cdot D^\prime=2$. A simple
check now shows that, as before, to make the intersection number of
the images of $F^\prime$ and $D^\prime$ $9$ we require $k=2$ and to
make the image of $D^\prime$ have self-intersection number $9$ we
require $k=3$, again a contradiction.

\medskip
{(ii)} Suppose next that $1<i<k\ (k\geq 3)$.  Blow down $E_1$, then
sequentially blow down the images of $B_4,\ldots,B_1$. Suppose first
that the image $C_i^\prime$ of $C_i$ under the blowing down map is not
a $(-1)$-curve.  Then, arguing as in the proof of
Proposition~\ref{p:c6}, one can show that $E_2$ must intersect the
last curve $C_k$ in the string attached to $D$ and it must necessarily
intersect $F^\prime$, the image of $F$. It follows that $i=1$,
otherwise, after blowing down $E_2$ and then sequentially blowing down
the images of $C_k,\ldots,C_1$, the image of $F^\prime$ would have
more than one singularity, contradicting Lemma~\ref{l:sing}. Since
$i>1$ is assumed, we reached a contradiction. Thus $C_i^\prime$ must
be a $(-1)$-curve. Now blow down $C_i^\prime$. Note that the images of
the curves $C_{i-1}$ and $C_{i+1}$ must be the last two curves (in
some blowing down process) of the string attached to $D$ to get blown
down, otherwise the image of $F^\prime$ after completing the blowing
down process will have more than one singular point, a
contradiction. Now there are two cases to consider: $E_2\cdot
F^\prime=0$ or $E_2\cdot F^\prime=1$.

Suppose that $E_2\cdot F^\prime=0$. Then it is easy to see that after
the blowing down process has been carried out, the image of $F^\prime$
will have self-intersection number $8$, which contradicts the fact
that $F$ should blow down to a cubic in $\cpk$.

Suppose that $E_2\cdot F^\prime=1$. Then $E_2$ must be disjoint from
the string attached to $D$. In order to make $D$ singular, $E_2\cdot
D$ must necessarily be $2$. It is now easy to check that, after
carrying out the blowing down process, the intersection number of the
images of the curves $F$ and $D$ will be less than $9$, which
contradicts the fact that they should blow down to a pair of cubics in
$\cpk$.

\medskip
{(iii)} Finally assume that $i=k\ (k\geq 2)$. Blow down $E_1$, then
sequentially blow down the images of $B_4,\ldots,B_1$ and then the
images of $C_k,\ldots,C_{l+1}$ until the resulting string
$C_1^\prime,\ldots,C_{l}^\prime$ attached to $D^\prime$ (the image of
$D$) is minimal. If a nonempty string remains, then, as before, $E_2$
must intersect the last curve $C_{l}^\prime$ in the string and the
curves $F^\prime$, the image of $F$, and $D^\prime$, each once
transversally. Then blowing down $E_2$ and then the image of
$C_l^\prime$, we find that $l$ must be $1$, otherwise the image of
$F^\prime$, after completing the blowing down process, would have more
than one singular point, contradicting Lemma~\ref{l:sing}. It follows
that the intersection number of the images of $F^\prime$ and
$D^\prime$, after completing the blowing down process, will be $8$,
contradicting the fact that they should blow down to a pair of cubics
in $\cpk$.

If $l=1$, that is the whole string attached to $D$ gets sequentially
blown down after blowing down $E_1$, then one can check that the
intersection numbers of $E_2$ and the images of $F^\prime$ and
$D^\prime$ must both be $2$. Again it follows that, after completing
the blowing down process, the intersection numbers of the images of
$F^\prime$ and $D^\prime$ will be $8$, a contradiction. This completes
{\bf {IIA}} and hence we conclude that

\medskip
{\bf {IIB.}} $E_1\cdot (C_1\cup\cdots\cup C_k)=0$.
We claim that $E_1\cdot F=1$. To see this, suppose, for a
contradiction, that $E_1\cdot F=0$. Then we have $E_1\cdot D=0$ or
$1$. Blow down $E_1$ and then sequentially blow down the images of the
curves $B_4,\ldots,B_1$. Then the image $F^\prime$ of $F$ will still
be smooth. It is thus necessary to have $E_2\cdot F^\prime=2$,
otherwise the image of $F$ will be smooth or have the wrong type of
singularity. But then the string $C_1,\cdots,C_k$ must be empty,
otherwise $E_2$ would have to intersect it and thus blowing down would
create additional singular points on the image of $F$, a
contradiction. It follows that, after completing the blowing down
process, the intersection number of the images of $F$ and $D$ will be
less than $9$, a contradiction. This verifies $E_1\cdot F=1$.

Now blowing down $E_1$ and then sequentially blowing down the $B_i$,
we find that the image of $F$ becomes a rational curve with a single
nodal point and having self-intersection number $9$. It follows that
$E_2$ cannot intersect $F$ and that $E_1$ must intersect $D$ once
transversally. Let $F^\prime$, $D^\prime$ denote the images of $F$ and
$D$, respectively, after blowing down $E_1$ and the $B_i$. It is then
easy to check that $F^\prime\cdot D^\prime=9$. Now the only
possibility for $E_2$, by the argument in the proof of
Proposition~\ref{p:c6}, is that $E_2\cdot C_k=1$ and $E_2\cdot
D=1$. For each value of $k$, the blowing down process now fixes $c$
and $c_1,\ldots,c_k$, providing the result.
\end{proof}

\subsection{The family $\frb$}
We next consider four-legged graphs in the family $\frb$:
\begin{figure}[ht]
\begin{center}
\includegraphics[width=10cm]{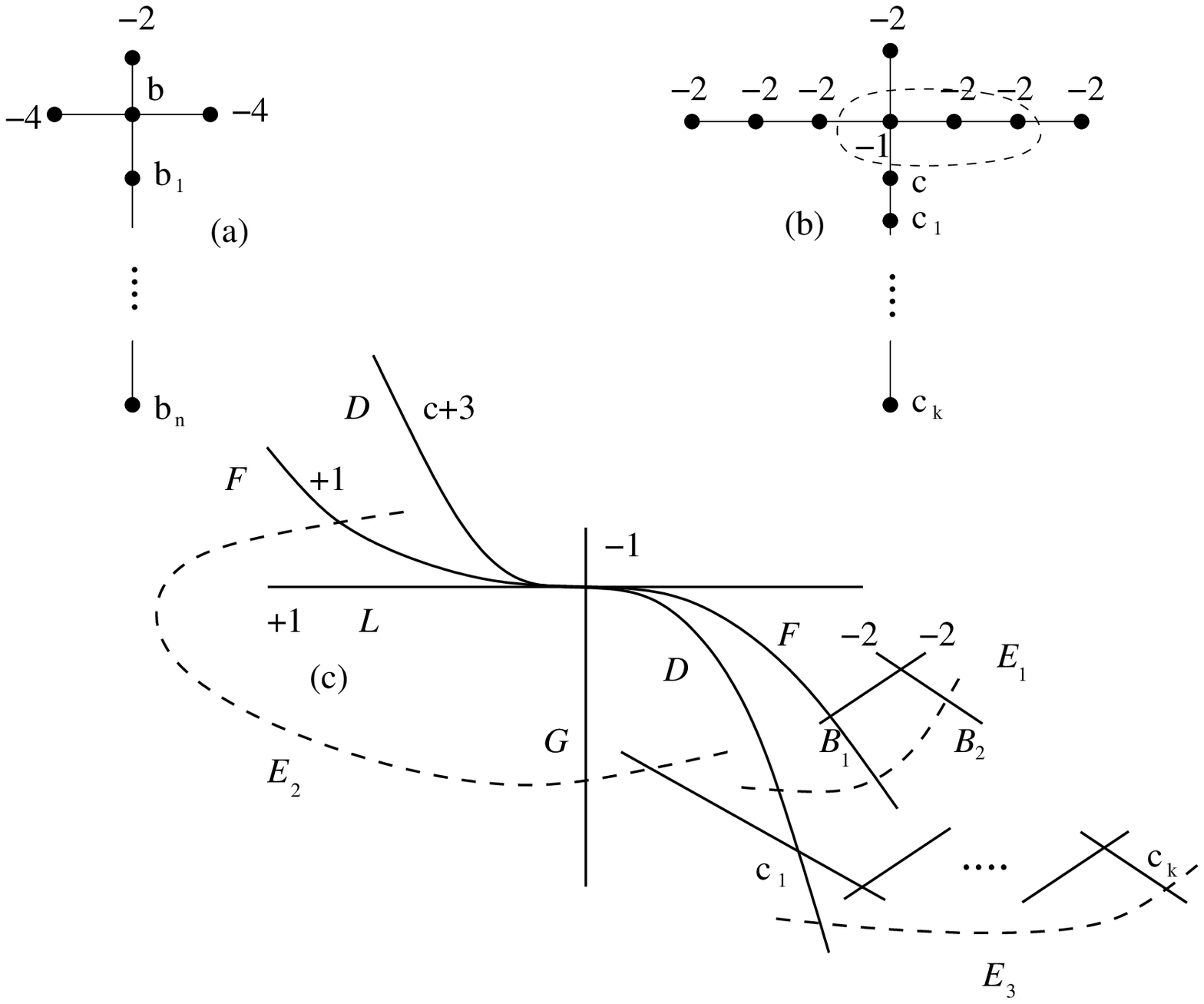}
\end{center}
\caption{The four-legged graphs in $\frb$.}
\label{f:b4leg}
\end{figure}
the generic four-legged member of this family is given by
Figure~\ref{f:b4leg}(a) with the dual graph given by
Figure~\ref{f:b4leg}(b).  After three blow-downs we obtain the
configuration $K$ depicted in Figure~\ref{f:b4leg}(c). As before, $Z$
is the closed symplectic $4$-manifold we get by gluing the
compactifying divisor $W_{\G '}$ (containing $K$) to a weak symplectic
$\bfq$HD filling of $(Y_\G , \xi _{\G})$. It is easy to check that
there must be three $(-1)$-curves, say $E_1,E_2,E_3$, not contained in
the strings $B_1,B_2$ and $C_1,\ldots,C_k$, such that, after blowing
down these three $(-1)$-curves, the images of the curves in the
strings $B_1,B_2$ and $C_1,\ldots,C_k$ can be sequentially blown down
and in the process $F$ and $D$ will be transformed to a pair of cubics
in $\cpk$ and the images of $G$ and $L$ will be lines. Since in the
blowing down process the string $B_1,B_2$ will eventually transform
into a string which can be sequentially blown down, one of the
$(-1)$-curves $E_1,E_2,E_3$, must intersect $B_1\cup B_2$; assume that
this curve is $E_1$.

\begin{prop}\label{p:b4leg}
Under the hypothesis of the existence of a $\bfq$HD filling, we get
that $E_1$ intersects $D$, $F$ and $B_2$, $E_2$ intersects $F$, $G$
and $C_1$, while $E_3$ intersects $D$ and $C_k$. The corresponding
framings are given as $c=-k-2$, $c_1=-3$ and $c_2=\ldots = c_k=-2$.
In particular, the resolution graph is of the form given by
Figure~\ref{f:qhd4uj}(b).
\end{prop}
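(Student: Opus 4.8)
The plan is to mirror, almost verbatim, the argument of Proposition~\ref{p:c4leg} for the two exceptional curves that resolve the cubics $F$ and $D$ and collapse the two strings, and to graft onto it a short separate analysis forced by the extra line $G$, which is the only structural feature distinguishing the four-legged $\frb$ configuration of Figure~\ref{f:b4leg}(c) from the four-legged $\frc$ configuration of Figure~\ref{f:c4leg}(c). As set up in the paragraph preceding the statement, gluing $K$ to the hypothesised weak $\bfq$HD filling and invoking McDuff's Theorem~\ref{thm:McDuff} makes $Z$ rational, and the $\bfq$HD count produces exactly three $(-1)$-curves $E_1,E_2,E_3$ outside the strings $B_1,B_2$ and $C_1,\ldots,C_k$, with $E_1$ meeting $B_1\cup B_2$. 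Throughout, the governing constraints are that $F$ and $D$ must descend to irreducible cubics, each carrying exactly one singular point whose link is a Hopf link or a trefoil (the fact recorded after Lemma~\ref{l:sing}), that $G$ and $L$ descend to lines, and that every curve disjoint from $L$ is eventually blown down; positivity of intersections then lets all geometric incidences be read off homologically.

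First I would pin down $E_1$ exactly as in Proposition~\ref{p:c4leg}. Since $B_1$ is the end of the $B$-chain attached to $F$, the possibility $E_1\cdot B_1=1$ is excluded because blowing down $E_1$ and then the chain would force the image of the chain to be tangent to $F$, and the terminal blow-down would then produce a singular point of multiplicity greater than $2$ on $F$ (equivalently, a link with too many components), contradicting the permitted singularity types; hence $E_1\cdot B_2=1$. Repeating the dichotomy of case \textbf{IIB} of Proposition~\ref{p:c4leg}, I would rule out $E_1\cdot F=0$ (which would leave $F$ smooth after the chain collapses, forcing another exceptional curve to meet $F$ with multiplicity $2$ and to empty the $C$-chain, ultimately violating $F\cdot D=9$), concluding $E_1\cdot F=1$. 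Blowing down $E_1,B_2,B_1$ then turns $F$ into a nodal cubic of square $9$, which in turn forces $E_1\cdot D=1$ and forbids any further $E_j$ from meeting $F$ in more than one point.

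Next I would exploit the line $G$, the feature absent from the $\frc$ analysis. Because $G$ meets $L$ once and must descend to a line, some $(-1)$-curve must raise its self-intersection to $1$; I would show this curve is $E_2$, with $E_2\cdot G=1$, using Lemma~\ref{lem:nonexnonneg} to rule out a sphere of nonnegative square in the complement of $L$ and thereby to pin the remaining incidences of $E_2$. The correct value $F\cdot G=3$ of a line meeting a cubic then forces $E_2\cdot F=1$, while the requirement that $C_1,\ldots,C_k$ collapse cleanly and that $D$ acquire only its single node (together with Corollary~\ref{c:nocycle}, which forbids a cycle of spheres in the complement of $L$) forces $E_2\cdot C_1=1$, exactly as two of the exceptional curves meet the two ends of the chain in Proposition~\ref{p:caseb2}. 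Finally, the node of the cubic $D$ and the far end of the $C$-chain are handled by $E_3$ through the standard argument of Proposition~\ref{p:c6}: $E_3$ must meet the last curve $C_k$ to start the chain's collapse and must meet $D$ once to create its single node, giving $E_3\cdot C_k=E_3\cdot D=1$. With all three exceptional curves located, the demand that $F^2=D^2=9$ and $G^2=1$ after every curve disjoint from $L$ is blown down fixes $c=-k-2$, $c_1=-3$ and $c_2=\cdots=c_k=-2$, and a direct computation of the dual graph identifies $\G$ with Figure~\ref{f:qhd4uj}(b).

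The hard part will be the bookkeeping that certifies, at each stage of the iterated blow-downs, that the images of $F$ and $D$ never acquire two distinct singular points nor a point of multiplicity exceeding $2$ --- the repeated appeal to Lemma~\ref{l:sing} and to the Hopf/trefoil dichotomy. The genuinely new difficulty relative to Proposition~\ref{p:c4leg} is disentangling $E_2$, which must simultaneously meet $G$, $F$ and the $C$-chain: one has to verify that no alternative distribution of the incidence conditions among $E_1,E_2,E_3$ survives the singularity constraints, and in particular that $E_2$ cannot be made disjoint from either $G$ or $F$ without either leaving $G$ non-linear or producing an illegal singularity on $F$ or $D$. Once this combinatorial case-checking is complete, the framing computation and the identification of the dual graph are routine.
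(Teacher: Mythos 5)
Your high-level architecture (glue the compactifying divisor, invoke McDuff, locate three exceptional curves, split according to which end of the $B$-string $E_1$ meets, then read off the framings) agrees with the paper, and the incidence pattern you arrive at is the correct one. But the proposal fails exactly where you claim the argument of Proposition~\ref{p:c4leg} transplants ``almost verbatim'': in the four-legged $\frb$ configuration the string attached to $F$ has length \emph{two}, not four, and this kills your key exclusion step. You dismiss $E_1\cdot B_1=1$ by arguing that collapsing $E_1$ and the $B$-chain makes a curve tangent to $F$, whose terminal blow-down creates a point of multiplicity greater than $2$ on the image of $F$. With only $B_1,B_2$ the contact order produced is exactly $2$, so the terminal blow-down creates a node or a cusp --- precisely the singularities a rational cubic in $\cpk$ is \emph{allowed} (indeed required) to have, so no local contradiction arises. (The ``link with too many components'' argument is the one from case IA of Proposition~\ref{p:c4leg}, where four $(-2)$-curves produce contact order $4$.) In the paper, Case I ($E_1\cdot B_1=1$) is excluded only by a global analysis of where $E_2$ and $E_3$ can sit, with contradictions coming from the numerical constraints $F\cdot G=3$, $D\cdot G=3$, $G^2=1$ and $F\cdot D=9$; in other words, the line $G$ is essential to killing Case I, so your plan of pinning down $E_1$ first and grafting the $G$-analysis on afterwards cannot be executed in that order. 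Moreover, your tangency argument only speaks to the subcase $E_1\cdot(C_1\cup\cdots\cup C_k)=1$; when $E_1$ is disjoint from the $C$-chain there is no tangency at all, and again only intersection-number bookkeeping rules it out.

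There are two further gaps of the same origin. First, inside Case II ($E_1\cdot B_2=1$) you pass immediately to the dichotomy $E_1\cdot F=0$ versus $E_1\cdot F=1$, tacitly assuming $E_1$ is disjoint from $C_1\cup\cdots\cup C_k$; the paper's subcase IIA, where $E_1$ meets both $B_2$ and the chain, requires a long separate analysis (three positions of $C_i$, each with sub-subcases) before it can be discarded. Second, your intermediate claim that blowing down $E_1,B_2,B_1$ already turns $F$ into a nodal curve of square $9$ is copied from the $\frc$ situation, where the length-four string contributes $1+1+1+1+4$ to the square of $F$; here the contribution is only $1+1+4$, and the image of $F$ reaches square $9$ only after $E_2$ and then $C_1$ are also blown down. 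As stated, your claim would force $E_2\cdot F=0$ (any further curve meeting $F$ would push its square past $9$), which contradicts the incidence $E_2\cdot F=1$ that you yourself --- correctly --- assert in the conclusion. The underlying lesson is that the four-legged $\frb$ graph is not ``$\frc$ plus a line'': the shorter $B$-string redistributes both the node of $F$ and its self-intersection budget among $E_1$, $E_2$ and the $C$-chain, and the constraints coming from $G$ must be used throughout the case analysis, not appended at the end.
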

\begin{proof}
Note that $E_1$ must be disjoint from $G$, otherwise blowing down
$E_1$ and then sequentially blowing down the images of $B_1$ and $B_2$
the image of $G$ would be either singular or would have
self-intersection number $2$, which contradicts the fact that $G$
should blow down to a line in $\cpk$. Since one of the $E_i$ must
necessarily intersect $G$ we may assume that $E_2\cdot G=1$.  We now
consider the two possibilities: $E_1\cdot B_i=1$ for $i=1,2$.

\medskip
{\bf Case I:} $E_1\cdot B_1=1$. The curve $E_1$ must necessarily be
disjoint from $F$, otherwise the image of $F$ after completing the
blowing down process would have more than one singular point which is
impossible for a cubic in $\cpk$.  We consider the two possibilities:
$E_1\cdot (C_1\cup\cdots\cup C_k)=1$ or $E_1\cdot (C_1\cup\cdots\cup
C_k)=0$.

\medskip
{\bf {IA.}} $E_1\cdot (C_1\cup\cdots\cup C_k)=1$.
Suppose that $E_1\cdot C_i=1$. Note that the image of $C_i$ must be
the last curve of the string attached to $D$ to get blown down, since
blowing down the the image of $C_i$ will make the image of $F$
singular so that if there are any remaining curves in the string then
these will create additional singularities on the image of $F$ when
they are blown down, a contradiction.

Suppose that $E_2\cdot (C_1\cup\cdots\cup C_k)=0$. Then the condition
that $G$ blows down to a $(+1)$-curve in $\cpk$, forces us to have
$E_3\cdot G=1$. But then necessarily $E_3\cdot (C_1\cup\cdots\cup
C_k)=0$. Thus the string $C_1,\ldots,C_k$ must have length $1$. Now
$E_2$ and $E_3$ must necessarily intersect $F$, each once
transversally, otherwise the intersection number of the images of $F$
and $G$ will not be $3$. It is also necessary that the intersection
number of one of $E_2$ or $E_3$ and $D$ be $2$ and the other be $0$ to
meet the requirements that the image of $D$ be singular and that the
images of $D$ and $G$ have intersection number $3$.  But then after
completing the blowing down process we will find that the images of
$D$ and $F$ have intersection number $7$, a contradiction.

Suppose that $E_2\cdot (C_1\cup\cdots\cup C_k)=1$. Note that $E_2$
must necessarily intersect $C_i$, the last curve in the string to get
blown down, otherwise the image of $G$ after repeatedly blowing down
will have self-intersection number greater than $1$, a
contradiction. Note also that $E_2$ must be disjoint from $F$,
otherwise blowing down the image of $C_i$ will lead to a triple point
on the image of $F$, a contradiction. Now consider the $(-1)$-curve
$E_3$. If $E_3$ intersects $C_1\cup\cdots\cup C_k$, then $E_3$ will be
disjoint from $F$. In such a case, after completing the blowing down
process, the image of $F$ will be a $7$-curve, a contradiction. If
$E_3$ is disjoint from $C_1\cup\cdots\cup C_k$, then $E_3\cdot F$ can
be $0$ or $1$. In either case, after completing the blowing down
process, the image of $F$ will have self-intersection number at most
$8$, again a contradiction. This argument concludes the analysis of the
case $E_1\cdot (C_1\cup\cdots\cup C_k)=1$.

\medskip
{\bf {IB.}} $E_1\cdot (C_1\cup\cdots\cup C_k)=0$.
Suppose that $E_2\cdot (C_1\cup\cdots\cup C_k)=0$ as well. As before, it
implies that $E_3\cdot G=1$. It follows that $E_1,E_2,E_3$ will be disjoint
from $C_1\cup\cdots\cup C_k$. But this means that the string must be empty,
which is never the case.

Suppose now that $E_2\cdot (C_1\cup\cdots\cup C_k)=1$. Then $E_2$ must
intersect the last curve of the string to get blown down. Also we must
necessarily have $E_3\cdot G=0$. If $E_3$ is disjoint from $C_1\cup\cdots\cup
C_k$, then the string must have length $1$.  It follows that, after completing
the blowing down process, the intersection number of the images of $D$ and $G$
will be either $2$ or $4$, depending on whether $E_2\cdot D=0$ or $1$, a
contradiction in both cases. So we may assume that $E_3\cdot
(C_1\cup\cdots\cup C_k)=1$. Note that the only way an appropriate singularity
on the image of $D$ can arise is if $E_3\cdot D=1$. It follows that we must
have $E_3\cdot C_k=1$ and $E_2\cdot C_1=1$.  Note also that we necessarily
have $E_2\cdot F=1$, otherwise the intersection number of the images of $F$
and $G$ will not be $3$.  If $E_3\cdot F=0$, then, after completing the
blowing down process, the intersection number of the images of $F$ and $D$
will be at most $8$, a contradiction. If $E_3\cdot F=1$, then after completing
the blowing down process, the intersection number of the images of $F$ and $G$
will be $4$, again a contradiction. This last observation concludes the
discussion of Case I and shows that $E_1\cdot B_1=1$ is not possible.

\medskip
{\bf Case II:} $E_1\cdot B_2=1$. Again we consider the two
possibilities: $E_1\cdot (C_1\cup\cdots\cup C_k)=1$ or $E_1\cdot
(C_1\cup\cdots\cup C_k)=0$.

\medskip
{\bf {IIA.}} $E_1\cdot (C_1\cup\cdots\cup C_k)=1$.
Note that $E_1\cdot F=0$, otherwise when the image of $C_i$ is
eventually blown down the image of $F$ will develop more than one
singularity, a contradiction. For a similar reason we also have
$E_1\cdot D=0$. Suppose that $E_1\cdot C_i=1$. We consider the
possibilities for $i$.

\medskip
{(i)} $i=1$.  Suppose that $E_2\cdot (C_1\cup\cdots\cup C_k)=0$. Then
the condition that the image of $G$, after completing the blowing down
process, be a $(+1)$-curve forces us to have $E_3\cdot G=1$ and
$E_3\cdot (C_1\cup\cdots\cup C_k)=0$. Also, the condition that the
images of $F$ and $D$ have nodes and that the intersection numbers of
the images of $F$ and $G$, and $D$ and $G$ be $3$ forces us to have
$E_2\cdot F=2$, $E_2\cdot D=0$ and $E_3\cdot F=0$, $E_3\cdot D=2$, or
vice-versa. Finally, the condition that $F$ will have self-intersection
number $9$ forces us to have $k=3$. But then it follows that the
intersection number of the images of $F$ and $D$, after completing the
blowing down process, will be $6$, a contradiction.

Suppose that $E_2\cdot (C_1\cup\cdots\cup C_k)=1$. Then $E_2$ will
intersect the last curve of the string to get blown down. Note that
$E_2\cdot D=0$, otherwise, after completing the blowing down process,
the intersection number of the images of $D$ and $G$ will be greater
than $3$, a contradiction. Similarly $E_2\cdot F=0$. Note also that
$E_3$ is necessarily disjoint from $G$. Thus if $E_3$ is also disjoint
from the string or from $D$, it follows that the intersection number
of $D$ and $G$ after completing the blowing down process will be $2$,
a contradiction. Thus $E_3$ necessarily intersects the string and
$D$. In fact, we require that $E_3\cdot C_k=1$. Now the condition that
the image of $F$ have a singularity forces us to have $E_3\cdot
F=1$. Also, the condition that the image of $F$ have self-intersection
number $9$ forces us to have $k=3$. However, if $k=3$, then the
intersection number of the images of $F$ and $D$, after completing the
blowing down process, will be $10$, a contradiction.

\medskip
{(ii)} $1<i<k\ (k\geq 3)$.  If $E_2\cdot (C_1\cup\cdots\cup C_k)=0$,
then, as before, we require that $E_3\cdot G=1$,\ $E_3\cdot
(C_1\cup\cdots\cup C_k)=0$. It follows that we must have $k=3$,
otherwise, after completing the blowing down process, the image of $F$
will either have a singularity of multiplicity greater than two or
will have more than one singular point, neither of which is permitted
for a cubic in $\cpk$.  Now the condition that the images of $F$ and
$G$ have intersection number $9$ forces us to have $E_2\cdot
F=E_3\cdot F=1$. But then the image of $F$, after completing the
blowing down process, will have self-intersection number $10$, a
contradiction.  Thus $E_2\cdot (C_1\cup\cdots\cup C_k)=1$ and $E_2$
intersects the last curve of the string that gets blown down. Note
that, as in the previous case, $E_2\cdot F=0$, $E_2\cdot D=0$.

Suppose that $C_i\cdot C_i=-4$. Then the image of $C_i$ will be a
$(-1)$-curve, after blowing down $E_1$ and then sequentially blowing
down the images of $B_2,B_1$. It follows that the images of
$C_{i-1},C_{i+1}$ must be the last two curves of the string attached
to $D$ to get blown down. Since $E_2\cdot (C_1\cup\cdots\cup C_k)=1$,
note that, as before, we require $E_3\cdot (C_1\cup\cdots\cup C_k)=1$,
$E_3\cdot D=1$. It follows that we must have $E_3\cdot C_k=1$.  Note
that $E_3\cdot F=0$, otherwise the image of $F$ after completing the
blowing down process would have more than one singular points, a
contradiction. Now, after completing the blowing down process, we find
that the intersection number of the images of $D$ and $F$ will be $8$,
a contradiction.

Suppose that $C_i\cdot C_i<-4$. Then after blowing down $E_1$ and then
sequentially blowing down $B_2,B_1$, the image of $C_i$ will not be a
$(-1)$-curve. As before, we can show that $E_3\cdot C_k=1$, $E_3\cdot
D=1$. The condition that $F$ become singular forces us to have
$E_3\cdot F=1$. Now after completing the blowing down process we see
that the $F$ will have more than one singularity, since $i>1$, a
contradiction.

\medskip
{(iii)} $i=k\ (k\geq 2)$.  If $E_2\cdot (C_1\cup\cdots\cup C_k)=0$,
then, as before, we require that $E_3\cdot G=1$,\ $E_2\cdot
(C_1\cup\cdots\cup C_k)=0$. To obtain the correct types of
singularities on the images of $F$ and $D$ and to meet the requirement
that the intersection numbers of the images of $F$ and $G$, and $D$
and $G$, after completing the blowing down process, be $3$, we require
that $E_2\cdot F=2,\ E_3\cdot F=0$ or $E_2\cdot F=0,\ E_3\cdot F=2$
and likewise for $D$. It follows that after completing the blowing
down process the intersection number of the images of $F$ and $D$ will
be $8$, a contradiction. So $E_2\cdot (C_1\cup\cdots\cup C_k)=1$ and
$E_2$ intersects the last curve of the string that gets blown down.

Suppose that $E_3\cdot (C_1\cup\cdots\cup C_k)=0$ or $E_3\cdot
D=0$. Then since $E_3\cdot G=0$, after completing the blowing down
process the intersection number of the images of $D$ and $G$ will be
$2$, a contradiction. So $E_3\cdot (C_1\cup\cdots\cup C_k)=0$ and
$E_3\cdot D=1$. Similarly we can check that $E_3\cdot F=1$.

Suppose that $E_3\cdot C_j=1$ for $j<k$. Blow down $E_1,E_2,E_3$ and
then sequentially blown down the images of $B_2,B_1$. Note then that,
after the images of $C_k,C_{k-1},\ldots,C_j$ have been sequentially
blown down, the image of $F$ will become singular. Also after the
images of $C_j,C_{j-1},\ldots,C_2$ have been sequentially blown down
the image of $D$ will become singular. Since the images of $F$ and $D$
should have exactly one singularity, the image of $C_j$ must
necessarily be the last curve of the string to get blown down. It
follows that $j$ must be $2$. It is now easy to check that, after the
blowing down process has been completed, the intersection number of
the images of $F$ and $D$ will be $8$, a contradiction.

Suppose that $E_3\cdot C_k=1$. Then once the image of $C_k$ is blown
down the image of $F$ will become singular. It follows that $k$ must
be $1$, contrary to assumption.

\medskip
{\bf {IIB.}} $E_1\cdot (C_1\cup\cdots\cup C_k)=0$.
If $E_2\cdot (C_1\cup\cdots\cup C_k)=0$, then we must have $E_3\cdot
G=1$ and $E_3\cdot (C_1\cup\cdots\cup C_k)=0$. It follows that the string
$C_1,\ldots,C_k$ must be empty, which is never the case. So $E_2\cdot
(C_2\cup\cdots\cup C_k)=1$ and $E_2$ intersects the last curve that
gets blown down. We thus necessarily have $E_3\cdot G=0$.

Suppose that $E_1\cdot F=0$. If $E_2\cdot F=0$ also, then the only way
that the image of $F$ can have the correct type of singularity is if
$E_3\cdot F=2$ and $E_3\cdot (C_1\cup\cdots\cup C_k)=0$. But then,
after completing the blowing down process, we find that the
intersection number of the images of $F$ and $G$ will be $2$, a
contradiction.  So $E_2\cdot F=1$. There are now two ways that the
image of $F$ can have the correct type of singularity: if $E_3\cdot
F=1$ and $E_3\cdot (C_1\cup\cdots\cup C_k)=1$ or if $E_3\cdot F=2$ and
$E_3\cdot (C_1\cup\cdots\cup C_k)=0$. In the former case, after
completing the blowing down process, the intersection number of the
images of $F$ and $G$ will be $4$, a contradiction. In the latter
case, after completing the blowing down process, the intersection
number of the images of $D$ and $G$ will be either $2$ or $4$
depending on whether $E_2\cdot D=0$ or $2$, a contradiction in either
case.

Suppose that $E_1\cdot F=1$. If $E_2\cdot F=0$, then, after completing
the blowing down process, the intersection number of the images of $F$
and $G$ will be either $2$ (if $E_3\cdot F=1$ and $E_3\cdot
(C_1\cup\cdots\cup C_k)=1$) or $1$ (if $E_3\cdot F=0$ or $E_3\cdot
(C_1\cup\cdots\cup C_k)=0$), a contradiction in either case. So
$E_2\cdot F=1$. Note now that if $E_3\cdot F=1$, then the
self-intersection number of the image $F$, after completing the
blowing down process, will be greater than $9$, which is not possible
for a cubic in $\cpk$, implying that $E_3\cdot F=0$. Also if $E_2\cdot
D=1$, then, after completing the blowing down process, the
intersection number of the images of $D$ and $G$ will be greater than
$3$, a contradiction, hence we conclude that $E_2\cdot D=0$. Next note
that if $E_3\cdot (C_1\cup\cdots\cup C_k)=0$ or $E_3\cdot D=0$, then
since $E_3\cdot G=0$, after completing the blowing down process, the
intersection number of the images of $D$ and $G$ will be $2$, a
contradiction. So $E_3\cdot (C_1\cup\cdots\cup C_k)=1$ and $E_3\cdot
D=1$. It follows that we must have $E_3\cdot C_k=1$ and $E_2\cdot
C_1=1$. Also if $E_1\cdot D=0$, then, after completing the blowing
down process, the intersection number of the images of $D$ and $F$
will be $5$, a contradiction. So we must have $E_1\cdot D=1$. Thus the
three $(-1)$-curves $E_1,E_2,E_3$ must be as given by the Proposition.
\end{proof}

\subsection{The family $\fra $}
Finally we consider four-legged graphs in the family $\fra$.
\begin{figure}[ht]
\begin{center}
\includegraphics[width=10cm]{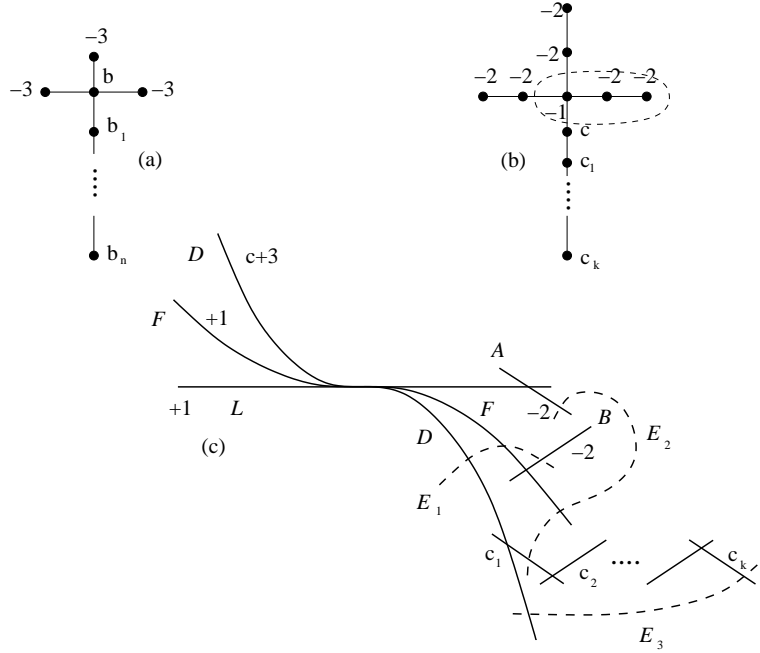}
\end{center}
\caption{The four-legged graphs in $\fra$. Proposition~\ref{p:a4leg}
  allows another configuration for $E_2$ in (c), where it intersects
  $C_2$ instead of $C_1$.}
\label{f:a4leg}
\end{figure}
The generic four-legged member $\G$ of $\fra$ is given in
Figure~\ref{f:a4leg}(a) with the dual graph in (b).  After three
blow-downs we obtain the configuration $K$ indicated in
Figure~\ref{f:a4leg}(c). Suppose that $Z$ is the closed symplectic
$4$-manifold we get by symplectically gluing the compactifying divisor
$W_{\G '}$ (containing $K$) to a weak symplectic $\bfq$HD filling of
$Y_\G$. Then it is easy to check that there must be three
$(-1)$-curves, say $E_1,E_2,E_3$, not contained in the string
$C_1,\ldots,C_k$, such that, after blowing down these three
$(-1)$-curves, the image of $B$ can be blown down and the images of
the curves in the string $C_1,\ldots,C_k$ can be sequentially blown
down so that in the process $F$ and $D$ are transformed to a pair of
cubics in $\cpk$ and the images of $L$ and $A$ are lines. Since in the
blowing down process $B$ will be eventually transformed into a curve
which can be blown down, one of the three $(-1)$-curves, call it
$E_1$, must intersect $B$.

\begin{prop}\label{p:a4leg}
  If a $\bfq$HD filling exists in the situation described above, then
  either $\G '$ blows down to a 3-legged graph (and was treated
  earlier), or $E_1$ intersects $D$, $F$ and $B$, $E_2$ intersects
  $A$, $F$ and either $C_1$ or $C_2$ and $E_3$ intersects $D$ and
  $C_k$. The corresponding framings in the latter case are given as
  $c=-k$, $c_2=-3$ and $c_1=c_3=\ldots =c_k=-2$. In particular, the
  corresponding resolution graph is of the form given by
  Figure~\ref{f:qhd4uj}(a).
\end{prop}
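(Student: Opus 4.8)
The plan is to run the same blow-down analysis as in the four-legged case of $\frb$ (Proposition~\ref{p:b4leg}), specialized to the symmetric legs of $\fra$, treating the configuration $K$ of Figure~\ref{f:a4leg}(c) as the three-legged $\fra$ configuration of Proposition~\ref{p:casea} enlarged by a second cubic $F$ and the auxiliary curve $B$. Concretely, $K$ consists of the $(+1)$-sphere $L$; the two curves $F$ and $D$ that are triply tangent to $L$ and must descend to rational cubics in $\cpk$ (with $F$ the curve of square $+1$ in $K$); the curve $A$, which must descend to a line; the curve $B$; and the chain $C_1,\ldots,C_k$ attached to $D$. Because $B$ and the chain are disjoint from $L$ they must be blown down, and a homological count produces exactly three exceptional curves $E_1,E_2,E_3$ outside the chain, with $E_1$ chosen so that $E_1\cdot B=1$. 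The governing constraints throughout are the same as before: each of $F$ and $D$ descends to an irreducible rational cubic, so by Lemma~\ref{l:sing} and the trefoil/Hopf dichotomy it carries exactly one singular point and never two; $A$ stays embedded with $A\cdot L=1$ and $A\cdot F=A\cdot D=3$; Bezout gives $F\cdot D=9$ and $F\cdot L=D\cdot L=3$; and Lemma~\ref{lem:nonexnonneg} together with Corollary~\ref{c:nocycle} forbid nonnegative-square spheres and cycles of spheres in the complement of $L$.

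First I would pin down $E_1$. As in Case~II of Proposition~\ref{p:b4leg}, if $E_1$ met $F$ or $D$ with multiplicity exceeding one, or met the chain while also meeting $F$, then some cubic would be forced to acquire two singular points or a singular point whose link has too many components; Lemma~\ref{l:sing} and the node/cusp classification exclude these. The surviving pattern is $E_1\cdot B=E_1\cdot F=E_1\cdot D=1$ with $E_1$ disjoint from the chain, which is the one asserted. Next I would analyse $A$: since $A$ must become a line, exactly one $E_i$ raises its self-intersection, and the argument of Proposition~\ref{p:casea} identifies this curve (call it $E_2$) as the one meeting $F$ once and entering the chain near its far end, so that $E_2$ meets $C_1$ or $C_2$; any other placement makes $A\cdot F$ or $A\cdot D$ miss the value forced by a line meeting a cubic. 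The remaining curve $E_3$ is then forced by the usual endpoint argument of Proposition~\ref{p:c6} to start the collapse of the chain, giving $E_3\cdot C_k=1$, and to be responsible for the single node or cusp of $D$, giving $E_3\cdot D=1$ with $E_3$ disjoint from $F$ and $A$.

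With the three intersection patterns fixed, the requirement that every curve disjoint from $L$ be blown down and that $F$ and $D$ reach self-intersection $9$ determines the framings $c=-k$, $c_2=-3$ and $c_1=c_3=\cdots=c_k=-2$ by the same bookkeeping used throughout Section~\ref{s:threeleg}, and reading off the dual graph yields Figure~\ref{f:qhd4uj}(a). The degenerate alternative in the statement occurs when the short leg containing $B$ collapses so that $\G'$ is already three-legged; there the configuration is covered by Propositions~\ref{p:casea}, \ref{p:b4pr} and \ref{p:caseb2} and needs no separate treatment. I expect the middle step to be the main obstacle: with three exceptional curves and two cubics to control at once, one must track the evolving intersection numbers $F\cdot D$, $A\cdot F$ and $A\cdot D$ through every blow-down --- exactly the lengthy case split carried out in Proposition~\ref{p:b4leg} --- in order to certify that no admissible pattern other than the stated one survives.
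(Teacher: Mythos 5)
Your proposal has a genuine gap, and it sits exactly where you placed your first step. You claim that $E_1$ can be pinned down \emph{first}, by purely local arguments (Lemma~\ref{l:sing}, the node/cusp dichotomy, positivity of intersections), leaving only the pattern $E_1\cdot B=E_1\cdot F=E_1\cdot D=1$ with $E_1$ disjoint from the chain. This is false as stated. The configuration in which $E_1$ meets the chain while being disjoint from $A$, $F$ \emph{and} $D$ violates no local constraint whatsoever: no cubic acquires two singular points, no forbidden multiplicity appears, and no cycle of spheres is created at that stage. The same is true for the configuration $E_1\cdot A=1$ with $E_1$ disjoint from the chain. In the paper's proof these are Cases I and IIA, and they are eliminated only by a global analysis: one must simultaneously locate $E_2$ and $E_3$, follow the whole blow-down sequence, and derive Bezout-type contradictions at the very end (the images of $F$ and $D$ failing to meet in $9$ points, $A\cdot D$ or $A\cdot F$ failing to equal $3$, or the image of $F$ failing to have square $9$). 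In other words, the positions of the three exceptional curves are interdependent, and $E_1\cdot F$, $E_1\cdot D$ are actually among the \emph{last} things determined (in the paper they fall out at the end of Case IIB(ii)), not the first. Your proposed ordering of the argument therefore cannot be repaired by inserting local lemmas; it requires precisely the case split you defer to "the middle step," which is not a technical verification but the entire content of the proposition.

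Two smaller points. First, you acknowledge the deferred case analysis yourself, so what you have written is an outline rather than a proof; the statement of the proposition (including the exclusivity of the listed pattern) is exactly what that analysis establishes. Second, your account of the degenerate alternative is not accurate: in the paper it does not come from "the short leg containing $B$ collapsing," but emerges inside Case IIB(i), where the constraint that the image of $C_j$ be the last curve of the chain to be blown down forces $k=1$ and forces $D$ to have square $2$ and $C_1$ square $-2$ in the dual configuration, which is then recognized as the dual of the unique three-legged graph in $\fra$ with four vertices. Getting this right matters, because this subcase is a consistent configuration (not a contradiction), and an argument that tries to rule it out rather than identify it would be wrong.
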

\begin{proof}
Note that if $E_1\cdot A=1$, then $E_1\cdot (C_1\cup\cdots\cup
C_k)=0$, otherwise after blowing down $E_1$ and then the image of
$B$, the image of $A$ will become singular when the image of $C_i$ is
eventually blown down, where $E_1\cdot C_i=1$, which contradicts the
fact that the image of $A$ in $\cpk$ will be a line. Thus at least
one $(-1)$-curve different from $E_1$ should intersect $A$. Let us
call this $(-1)$-curve $E_2$.
We now begin the case-by-case analysis.

\medskip
{\bf Case I:} $E_1\cdot (C_1\cup\cdots\cup C_k)=1$. Suppose that
$E_1\cdot C_i=1$. In this case, by the argument above, we will
necessarily have $E_1\cdot A=0$. Note that if $E_1\cdot F=1$, then
after $E_1$ and the image of $B$ are blown down, the image $F^\prime$
of $F$ will be singular. However, the intersection number of the image
$C_i^\prime$ of $C_i$ and $F^\prime$ will be $2$. Thus when the image
of $C_i^\prime$ is eventually blown down the image of $F^\prime$ have
a second singularity, which contradicts the fact that it must
eventually blow down to a cubic in $\cpk$. Thus $E_1\cdot F=0$. Also,
we must have $E_1\cdot D=0$, otherwise, after repeatedly blowing down,
the image of $D$ will eventually have a triple point, which
contradicts the fact that the image of $D$ in $\cpk$ should also be a
cubic.

Note that if $E_2\cdot (C_1\cup\cdots\cup C_k)=0$, then we must have
have $E_3\cdot A=1$ and $E_3\cdot (C_1\cup\cdots\cup C_k)=1$, since,
after completing the blowing down process, the image of $A$ should be
a smooth curve of self-intersection number $1$. Renumbering $E_2$ and
$E_3$, if necessary, we may assume that $E_2\cdot (C_1\cup\cdots\cup
C_k)=1$. Suppose that $E_2\cdot C_j=1$.  Notice that, in the blowing
down process, the image of $C_j$ must either be the last curve of the
string attached to $D$ to get blown down or it must be the penultimate
curve to get blown down, since otherwise, after the blowing process is
complete, the self-intersection number of the image of $A$ will be
greater than $1$, a contradiction.

\medskip
{(i)} $i=1$.

\medskip {(ia)} Suppose first that the image $C_j$ is the last curve
of the string to get blown down. Then we must have $E_3\cdot A=1$, and
hence $E_3\cdot (C_1\cup\cdots\cup C_k)=0$.  Now, since $E_1\cdot
D=0$, there are two ways that an appropriate singularity can appear on
image of $D$: either $E_2\cdot D=1$ or $E_3\cdot D=2$.  Suppose that
$E_2\cdot D=1$. Then $E_3\cdot D=0$, otherwise, after completing the
blowing down process, the intersection number of the images of $D$ and
$A$ would be greater than $3$, a contradiction. We now have $E_2\cdot
F=0$ or $1$.  If $E_2\cdot F=0$, then we must have $E_3\cdot F=2$,
otherwise the image of $F$, after completing the blowing down process,
would be smooth and rational, which is a contradiction. Now the
condition that the self-intersection number of the image of $F$, after
completing the blowing down process, will be $9$, forces us to have
$k=2$. But then, after completing the blowing down process, the
intersection number of the images of $F$ and $D$ will be $7$, a
contradiction. If $E_2\cdot F=1$, then $E_3\cdot F=0$, otherwise the
intersection number of the images of $F$ and $A$, after completing the
blowing down process, would be greater than $3$, a contradiction. Now,
again, the condition that the self-intersection number of the image of
$F$, after completing the blowing down process, will be $9$, forces us
to have $k=3$. But then, after completing the blowing down process,
the intersection number of the images of $F$ and $D$ will be $10$,
again a contradiction.

Suppose that $E_3\cdot D=2$. Then $E_2\cdot D=0$. We now have
$E_2\cdot F=0$ or $1$. If $E_2\cdot F=0$, then we must have $E_3\cdot
F=2$. Now, as before, the condition that the self-intersection number
of the image of $F$, after completing the blowing down process, will
be $9$, forces $k=3$. But then, after completing the blowing down
process, the intersection number of the images of $F$ and $D$ will be
$10$, a contradiction. If $E_2\cdot F=1$, then we must have $E_3\cdot
F=0$. Thus, again, the condition that the self-intersection number of
the image of $F$, after completing the blowing down process, will be
$9$, forces $k=3$. And, this time, after completing the blowing down
process, the intersection number of the images of $F$ and $D$ will be
$7$, again a contradiction.

\medskip {(ib)} The image of $C_j$ is then the penultimate curve of the string
to get blown down. Then we must have $E_3\cdot A=0$. Also, we must have
$E_2\cdot D=0$, otherwise, after completing the blowing down process, the
intersection number of the images of $D$ and $A$ would be greater than $3$, a
contradiction. Similarly, we must have $E_2\cdot F=0$.

Suppose that $E_3\cdot (C_1\cup\cdots\cup C_k)=0$ or $E_3\cdot
D=0$. Then, after completing the blowing down process, the
intersection number of the images of $D$ and $A$ will be at most $2$,
a contradiction. So $E_3\cdot (C_1\cup\cdots\cup C_k)=1$ and $E_3\cdot
D=1$. If $E_3\cdot C_l=1$ for $l<k$, then we must have $l=k-1$ and
$j=k$, otherwise, after completing the blowing down process, the image
of $D$ will have more than one singular point, a
contradiction. However, if $l=k-1$ and $j=k$, then, after completing
the blowing down process, the intersection number of the images of $D$
and $A$ will be $2$, a contradiction. So we must have $E_3\cdot
C_k=1$. Also we must have $E_3\cdot F=1$, otherwise the image of $F$,
after completing the blowing down process will be smooth, a
contradiction. Now, the condition that the self-intersection number of
the image of $F$, after completing the blowing down process, will be
$9$, forces us to have $k=3$. But then, after completing the blowing
down process, the intersection number of the images of $F$ and $D$
will be $10$, a contradiction.

\medskip
{(ii)} $1<i<k\ (k\geq 3)$.

\medskip
{(iia)} The image of $C_j$ is last curve of the string to get blown
down. Then we must have $E_3\cdot A=1$, and hence $E_3\cdot
(C_1\cup\cdots\cup C_k)=0$. If $k$ is greater than $2$, then, after
completing the blowing down process, the image of $F$ will either have
a point of multiplicity greater than $2$ or have more than one
singular point, neither of which is possible for a cubic in $\cpk
$. Thus we must have $k=3$ and thus $j=1$ or $3$. Also, we must have
$E_2\cdot F=0$, otherwise, after completing the blowing down process,
the image of $F$ will have a triple point, a
contradiction. Furthermore, we must have $E_3\cdot F=1$, otherwise,
after completing the blowing down process, the intersection number of
the images of $F$ and $A$ will be less that $3$, a contradiction.  Now
the only way a singularity of the appropriate type can appear on the
image of $D$ is if $E_2\cdot D=1$ or $E_3\cdot D=2$.

Suppose first that $E_2\cdot D=1$. Then we must have $E_3\cdot D=0$,
otherwise, after completing the blowing down process, the intersection
number of the images of $A$ and $D$ will be greater than $3$, a
contradiction. It follows that, after completing the blowing down
process, the intersection number of the images of $F$ and $D$ will be
at most $8$, which contradicts the fact that images of $F$ and $D$ in
$\cpk$ are a pair of cubics.

Suppose now that $E_3\cdot D=2$. Then we must have $E_2\cdot D=0$,
otherwise, after completing the blowing down process, the intersection
number of the images of $D$ and $A$ will be greater that $3$, a
contradiction. It follows that, after completing the blowing down process,
the intersection number of the images of $F$ and $D$ will be at most
$8$, a contradiction.

\medskip
{(iib)} The image of $C_j$ is the penultimate curve of the string to
get blown down. Then we must have $E_3\cdot A=0$ and $E_2\cdot
D=E_2\cdot F=0$. Also, we must have $E_3\cdot (C_1\cup\cdots\cup
C_k)=1$ and $E_3\cdot D=1$.

Suppose that $E_3\cdot C_l=1$ for $l<k$. Then the image of $C_l$ must
be the last curve of the string attached to $D$ to get blown
down. Indeed, it is easy to see that after the image of $C_l$ is blown
down, the image of the portion $C_{l+1},\ldots,C_k$ of the the string
must be a point, otherwise, after completing the blowing down process,
the image of $D$ will have more than one singular point. Thus we must
have $i>l$ or $j>l$. In the former case, after the portion
$C_{l},\ldots,C_k$ of the the string has been collapsed to a point,
the image of $F$ will be singular and thus the image of $C_l$ must be the
last curve of the string to get blown down. In the latter case, since
the image of $C_j$ is the penultimate curve of the string to get blown
down, $C_l$ must be the last curve of the string to get blown down. Now
again using the assumption that the image of $C_j$ is the penultimate
curve of the string to get blown down, we must have either $j<l$ or
$j>l$. Suppose that $j<l$. Then we must have $i>l$. Also, we must have
$E_3\cdot F=0$, otherwise, after completing the blowing down process,
the image of $F$ will have a singularity of multiplicity greater than
$2$, a contradiction. Now, after completing the blowing down process,
the intersection number of the images of $A$ and $F$ will be $2$, a
contradiction. Suppose that $j>l$. Then, after completing the blowing
down process, the intersection number of the images of $A$ and $D$
will be $2$, again a contradiction.

Suppose that $E_3\cdot C_k=1$. Then we must have $E_3\cdot F=0$ or
$1$. Suppose that $E_3\cdot F=0$. Then, in the blowing down process,
the images of the curves $C_{i-1}$ and $C_{i+1}$ must be the last two
curves of the string attached to $D$ to get blown down. It follows
that we must have $i=2$. It is now easy to check that, after
completing the blowing down process, the image of $F$ will have
self-intersection number $8$, a contradiction. Suppose that $E_3\cdot
F=1$. Then the image of $C_i$ must be the last curve of the string to
get blown down. It follows that we must have $i=2$ and $j=1$. We now
find that, after completing the blowing down process, the intersection
number of the images of $F$ and $A$ will be $2$, a contradiction.

\medskip
{(iii)} $i=k\ (k\geq 2)$.

\medskip
{(iiia)} The image of $C_j$ is last curve of the string to get blown
down. Then we must have $E_3\cdot A=1$, and hence $E_3\cdot
(C_1\cup\cdots\cup C_k)=0$. Also we must have $j=1$. Now the only way
a singularity of the appropriate type can appear on the image of $D$
is if $E_2\cdot D=1$ or $E_3\cdot D=2$.

Suppose that $E_2\cdot D=1$. Then we must have $E_3\cdot D=0$. Now we
have $E_2\cdot F=0$ or $1$. If $E_2\cdot F=0$, then it is easy to see
that, after completing the blowing down process, the intersection
number of the images of $F$ and $D$ will be $5$, a contradiction.  If
$E_2\cdot F=1$, then one can check that, after completing the blowing
down process, the intersection number of the images of $F$ and $D$
will be $8$, again a contradiction.

Suppose that $E_3\cdot D=2$. Then we must have $E_2\cdot D=0$. Again
we have $E_2\cdot F=0$ or $1$. If $E_2\cdot F=0$, then we must have
$E_3\cdot F=2$. It follows that, after completing the blowing down
process, the intersection number of the images of $F$ and $D$ will be
$8$, a contradiction. If $E_2\cdot F=1$, then we must have $E_3\cdot
F=0$. In this case, after completing the blowing down process, the
intersection number of the images of $F$ and $D$ will be $5$, again a
contradiction.

\medskip
{(iiib)} The image of $C_j$ is the penultimate curve of the string to
get blown down. Then we must have $E_3\cdot A=0$ and $E_2\cdot
D=E_2\cdot F=0$. Also, we must have $E_3\cdot (C_1\cup\cdots\cup
C_k)=1$ and $E_3\cdot D=1$. Furthermore, we must have $E_3\cdot F=1$,
otherwise, after completing the blowing down process, the image of $F$
would be smooth, a contradiction. Now note that if $l\neq 1$, then we
must have $l=2$ and $j=1$, otherwise, after completing the blowing
down process, the image of $F$ will have more than one singular point,
a contradiction. If $l=1$, then $C_1$ must be the last curve to get
blown down, otherwise, after completing the blowing down process, the
image of $D$ will have more than one singular point, a
contradiction. Thus we must have $j=2$. It now follows that, after
completing the blowing down process, the intersection number of the
images of $D$ and $A$ will be $2$, a contradiction. If $l=2$ and $j=1$,
then $C_2$ must be the last curve to get blown down and in this case
it follows that, after completing the blowing down process, the
intersection number of the images of $F$ and $A$ will be $2$, again a
contradiction.

\medskip
{\bf Case II:} $E_1\cdot (C_1\cup\cdots\cup C_k)=0$.

\medskip
{\bf {IIA.}} $E_1\cdot A=1$. Since we are assuming that $E_2\cdot
A=1$ also, we will necessarily have $E_2\cdot (C_1\cup\cdots\cup
C_k)=0$ and $E_3\cdot A=0$. Also, since the string $C_1\,\ldots,C_k$
is nonempty for every 4-legged graph $\G$ in $\fra$, we must have
$E_3\cdot (C_1\cup\cdots\cup C_k)=1$. Now if $E_1\cdot D=0$, then,
after completing the blowing down process, the intersection number of
the images of $D$ and $A$ will be at most $2$, a contradiction. It
follows that we must have $E_1\cdot D=1$ and thus we must also have
$E_2\cdot D=1$.

Suppose that $E_1\cdot F=1$. Then we must have $E_2\cdot F=0$. If
$E_3\cdot F=0$ also holds, then, after completing the blowing down
process, the self-intersection number of the image of $F$ will be $6$,
a contradiction. So we must have $E_3\cdot F=1$ and $k$ must be
$2$. But then, after completing the blowing down process, the
intersection number of the images of $F$ and $D$ will be $10$, a
contradiction.

Suppose that $E_1\cdot F=0$. Then we must have $E_2\cdot F=2$. Again
we require $E_3\cdot F=1$ and $k=2$. It thus follows again that, after
completing the blowing down process, the intersection number of the
images of $F$ and $D$ will be $10$, a contradiction as before.

\medskip
{\bf {IIB.}} $E_1\cdot A=0$. We may now assume $E_2\cdot
(C_1\cup\cdots\cup C_k)=1$, since if $E_2\cdot (C_1\cup\cdots\cup
C_k)=0$, then we would necessarily have $E_3\cdot A=1$ and $E_3\cdot
(C_1\cup\cdots\cup C_k)=1$, and we would just renumber the
$(-1)$-curves. Suppose that $E_2\cdot C_j=1$. It follows that, in the
blowing down process, the image of $C_j$ is either the last curve of
the string to get blown down or the penultimate curve to get blown
down.

\medskip
{(i)} Suppose first that the image of $C_j$ is last curve of the
string to get blown down.  Then we must have $E_3\cdot A=1$ and
$E_3\cdot (C_1\cup\cdots\cup C_k)=0$. Since we are assuming that
$E_1\cdot A=0$, we must have that $k=1$. Now if $E_2\cdot F=0$, then,
after completing the blowing down process, the intersection number of
the images of $A$ and $F$ will be at most $2$, a contradiction. So
$E_2\cdot F=1$ and thus $E_3\cdot F=1$ also. It follows that we must
have $E_1\cdot F=1$, otherwise, after completing the blowing down
process, the image of $F$ would be smooth or have more than one
singularity, a contradiction in both cases.

Suppose that $E_2\cdot D=1$. Then we must have $E_3\cdot D=0$. Note
also that we must have $E_1\cdot D=1$, otherwise, after completing the
blowing down process, the intersection number of the images of $F$ and
$D$ will be different from $9$, a contradiction. It follows that $D$
must have self-intersection number $2$ and $C_1$ must have
self-intersection number $-2$. It is easy to see that in this case
$\G$ is just the unique three-legged graph in the family $\fra$ with
four vertices and we already know that in this case the corresponding
contact $3$-manifold $(Y_\G, \xi _{\G})$ admits a $\bfq$HD filling.

Suppose that $E_2\cdot D=0$. Then we must have $E_3\cdot D=2$. Again
we can check that we must have $E_1\cdot D=1$. As in the previous
case, it follows that $D$ must have self-intersection number $2$ and
$C_1$ must have self-intersection number $-2$, and this case has
already been considered.

\medskip {(ii)} The image of $C_j$ is the penultimate curve of the
string to get blown down. Then we must have $E_3\cdot A=0$. Note that
if $E_2\cdot D=1$, then, after completing the blowing down process,
the intersection number of the images of $A$ and $D$ will be $4$, a
contradiction. Thus $E_2\cdot D=0$.  Also we must have $E_3\cdot
(C_1\cup\cdots\cup C_k)=1$ and $E_3\cdot D=1$, otherwise, after
completing the blowing down process, the intersection number of the
images of $A$ and $D$ will be at most $2$, a contradiction. Suppose
that $E_3\cdot C_l=1$. Now if $l<k$, then we must have $k=2$, $l=1$
and $j=2$.  But then, after completing the blowing down process, the
intersection number of the images of $A$ and $D$ will be $2$, a
contradiction. So $l=k$.  It follows that we must have $j=1$ or
$2$. Now note that if $E_2\cdot F=0$, then, after completing the
blowing down process, the intersection number of the images of $A$ and
$F$ will be at most $2$, a contradiction. So we must have $E_2\cdot
F=1$. It also follows that we must have $E_3\cdot F=0$, otherwise,
after completing the blowing down process, the intersection number of
the images of $A$ and $F$ will be greater than $3$, a
contradiction. We now must have $E_1\cdot F=1$, otherwise, after
completing the blowing down process, the image of $F$ will be smooth,
a contradiction. For each value of $k$ and for $j=1,2$, the blowing
down process now fixes $c,c_1,\ldots,c_k$, as stated in the
Proposition.
\end{proof}
Now we are ready to give the proof of the second main result of the
paper.
\begin{proof}[Proof of Theorem~\ref{t:main2}]
Consider a spherical Seifert singularity $S_{\G}$ with minimal good
resolution graph having at least four legs (and central framing $<
-2$). Once again, the existence of a $\bfq$HD smoothing implies the
existence of a $\bfq$HD filling of the Milnor fillable contact
structure $\xi _{\G}$ on the link $Y_{\G}$ showing the implication
$(1)\Rightarrow (2)$.  Suppose now that $(Y_{\G }, \xi _{\G})$ admits
a $\bfq$HD filling. By Theorem~\ref{t:ssw}, we get that $\G$ is a
4-legged graph in $\fra\cup \frb \cup \frc$. Therefore the combination
of Propositions~\ref{p:c4leg}, \ref{p:b4leg} and \ref{p:a4leg} implies
$(2)\Rightarrow (3)$. Finally $(3)\Rightarrow (1)$ follows from the
results of \cite[Examples~8.7, 8.12 and 8.14]{SSW}, cf. also
\cite{Wahluj}. These existence results then conclude the proof of the
theorem.
\end{proof}

For the sake of completeness we provide curve configurations in $\cpk$
with the property that repeated blow-ups provide the configurations of
Figures~\ref{f:c4leg}(c), \ref{f:b4leg}(c) and \ref{f:a4leg}(c), and hence,
by Pinkham's result \cite{Pink} (as formulated in
\cite[Theorem~8.1]{SSW}), we get an alternative proof of the existence
of $\bfq$HD smoothings.  Below we will restrict ourselves to the
description of the curves and their intersection patterns, and leave
it to the reader to check that an appropriate blow-up sequence
restores the diagrams listed above.

Let $D$ be the cubic curve
defined by the equation
\[
f(x,y,z) =y^2z-x^3-x^2z
\]
and $L$ the line $\{ z=0\}$.

In order to find a configuration which can be blown up to
Figure~\ref{f:c4leg}(c), let us add the cubic $D_1$ given by the
equation
\[
f_1(x,y,z)=y^2z + \frac 12 xyz + yz^2 - \frac 98 x^3 -
2x^2z - xz^2
\]
to $L$ and $D$.  The curves $D$ and $D_1$ are rational nodal cubics with nodes
at $[0:0:1]$ and $[-\frac 23:-\frac 13:1]$, respectively. It is easy to check
that both $D$ and $D_1$ are triply tangent to $L$ at the point $[0:1:0]$ and
are also triply tangent to each other at $[0:1:0]$ and have intersection
multiplicity $6$ at the point $[0:0:1]$.

For finding the configuration providing the base for
Figure~\ref{f:b4leg}(c), we consider $L$ and $D$ as before, together
with $D_2$ given by the equation
\[
f_2(x,y,z)=y^2z+2xyz+2yz^2-2x^3-4x^2z-2xz^2.
\]
The curve $D_2$ is a rational nodal cubic with a node at $[-1:0:1]$, and $L$, $D$
and $D_2$ are pairwise triply tangent at $[0:1:0]$. Also $D$ and $D_2$
intersect at $[0:0:1]$ with intersection multiplicity $4$ and at $[-1:0:1]$
with intersection multiplicity $2$.  Consider, furthermore, $L_1$ given by the
equation $\{ x+z=0\}$. It passes through the point $[0:1:0]$ and is tangent to
$D$ at $[-1:0:1]$.

Finally we describe a configuration from which repeated blow-ups
result in the configuration of Figure~\ref{f:a4leg}(c). Once again,
consider $L$ and $D$ as before, together with the cubic $D_3$ given by
the equation $f_3(x,y,z)=$
\[
y^2z+(1-i\sqrt{3})xyz+\frac 49(3-i\sqrt{3})yz^2+\frac
12(-1+i\sqrt{3})x^3+(-2+i\sqrt{3})x^2z-\frac 49(-3+i\sqrt{3})xz^2.
\]
This curve is a rational nodal cubic with a node at $[-\frac 43:-\frac 49
i\sqrt{3}:1]$. The line $L$ and the curves $D$, $D_3$ are pairwise triply
tangent at $[0:1:0]$. Also the curves $D$ and $D_3$ intersect at each of the
points $[0:0:1]$ and $[-\frac 43:-\frac 49 i\sqrt{3}:1]$ with intersection
multiplicity $3$.  Let $N$ be the line $\{y-i\sqrt{3}(x+\frac 89z)=0\}$; it is
triply tangent to $D$ at $[-\frac 43:-\frac 49 i\sqrt{3}:1]$ and intersects
$D_3$ at the same point with intersection multiplicity $3$.


\begin{thebibliography}{AAA}

\bibitem{BO}
M. Bhupal and K. Ono,
{\it Symplectic fillings of links of quotient surface singularities},
arXiv:0808.3794.

\bibitem{CNP}
C. Caubel, A. N\'emethi and P. Popescu-Pampu,
{\it Milnor open books and Milnor fillable contact 3-manifolds},
Topology {\bf 45} (2006), 673--689.

\bibitem{FSratbl}
R. Fintushel and R. Stern,
{\it Rational blowdowns of smooth 4--manifolds}, J. Diff. Geom. {\bf46}
(1997), 181--235.

\bibitem{GaS}
D. Gay and A. Stipsicz,
{\it On symplectic caps}, arXiv:0908.3774.

\bibitem{vSdJ}
T. de Jong and D. van Straten,
{\it Deformation theory of sandwiched singularities},
Duke Math. Journal {\bf95} (1998), 451--522.

\bibitem{Lauf} H. Laufer,
{\it Taut two-dimensional singularities},
 Math. Ann. {\bf 205} (1973), 131--164.

\bibitem{M} D. McDuff, {\it The structure of rational and ruled
symplectic 4-manifolds}, Journal of the Amer. Math. Soc. {\bf 3}
(1990), 679--712; {erratum}, {\bf 5} (1992), \mbox{987--988}.

\bibitem{M2} D. McDuff, {\it The local behaviour of holomorphic curves
  in almost complex $4$-manifolds}, J. Diff. Geom. {\bf 34} (1991),
  143--164.

\bibitem{M3} D. McDuff, {\it Singularities and positivity of
  intersections of J-holomorphic curves}, with Appendix by Gang Liu,
  in Holomorphic curves in Symplectic Geometry, M. Audin and
  F. Lafontaine, eds., Progress in Mathematics {\bf 117}, Birkhauser
  (1994), 191--216.

\bibitem{OO} H. Ohta and K. Ono, {\it Simple singularities and
symplectic fillings}, J. Differential Geom. {\bf 69} (2005),  1--42.

\bibitem{OW} P. Orlik and P. Wagreich, {\it Isolated singularities of
  algebraic surfaces with $C^*$ action}, Ann. Math. {\bf 93} (1971),
  205--228.

\bibitem{OS} B. Ozbagci and A. Stipsicz, {\it Surgery on contact
  3-manifolds and Stein surfaces}, Bolyai Society Mathematical
  Studies, {\bf13}.  Springer-Verlag, Berlin; J\'anos Bolyai
  Mathematical Society, Budapest, 2004.

\bibitem{Pratbl}
J. Park,
{\it Seiberg--Witten invariants of generalized rational blow--downs},
Bull. Austral. Math. Soc. {\bf 56} (1997), 363--384.

\bibitem{Pinki}
H. Pinkham,
{\it Normal surface  singularities with $\C^{*}$ action}, Math. Ann.
{\bf227} (1977), 183--193.

\bibitem{Pink}
H. Pinkham,
{\it Deformations of normal surface
singularities with $\C^{*}$ action},  Math. Ann. {\bf 232} (1978),
65--84.

\bibitem{SSW}
A. Stipsicz, Z. Szab\'o and J. Wahl,
{\it Rational blowdowns and smoothings of surface singularities},
Journal of Topology {\bf1} (2008), 477--517.

\bibitem{Wahl}
J. Wahl,
{\it Smoothings of normal surface singularities},
Topology {\bf20} (1981), 219--246.

\bibitem{Wahluj} 
J. Wahl, 
{\it Construction of $\bfq$HD smoothings of valency 4 surface
singularities}, arXiv:1005.2199.

\bibitem{personal}
J. Wahl, {\it Personal communication}.
\end{thebibliography}
\end{document}